\documentclass[EJP,preprint]{ejpecp}

\usepackage[T1]{fontenc}
\usepackage[utf8]{inputenc}
\usepackage{booktabs}
\usepackage{enumitem}
\usepackage{xcolor}

\hypersetup{
  colorlinks=true,
  citecolor=blue!55!black,
  linkcolor=blue!55!black,
  urlcolor=blue!55!black,
  pdftitle={From Ehrenfest to Heisenberg scales: a hierarchy of mixing times for a phase-randomized quantum baker walk},
  pdfauthor={Amir Sepehri},
  pdfsubject={Structured random walks on unitary groups and Haar mixing},
  pdfkeywords={random walk on a compact group, unitary design, Wasserstein mixing, quantum baker map, Haar measure}
}

\setlist{nosep}
\numberwithin{equation}{section}

\SHORTTITLE{Mixing times for a phase-randomized quantum baker walk}

\TITLE{From Ehrenfest to Heisenberg scales: a hierarchy of mixing times for a
phase-randomized quantum baker walk}

\AUTHORS{Amir~Sepehri\footnote{Independent researcher, San Diego, CA.
\EMAIL{sepehri@alumni.stanford.edu}}}

\KEYWORDS{random walk on a compact group ; Wasserstein mixing ; unitary design ;
quantum baker map ; Haar measure ; Ehrenfest time}

\AMSSUBJ{60B15}
\AMSSUBJSECONDARY{60J10 ; 81Q50 ; 81P45}

\SUBMITTED{September 24, 2026}
\ACCEPTED{}

\VOLUME{0}
\YEAR{2026}
\PAPERNUM{0}
\DOI{10.1214/YY-TN}

\ABSTRACT{The quantum baker map is a standard model of quantum chaos, but
rigorous analysis of how it spreads random perturbations remains difficult.
Following the repeated ``baker plus refreshed diagonal perturbation''
architecture of Schack and Caves, we study a tractable strong-noise model
based on the Balazs--Voros quantization: at each time step, its unitary
propagator is followed by a diagonal unitary whose entries are independent and
uniform on the unit circle.  The resulting random propagator is a structured
random walk on $\U(N)$, $N=2^k$.  We quantify how quickly the baker dynamics
spreads this basis-local randomness at three levels: ensemble means, two-copy
statistics, and the full law of the accumulated propagator.  The walk becomes
an exact unitary $1$-design at time $k+1$, while its fixed-accuracy $2$-design
mixing time is $\Theta(\log N)$.  Thus the induced state ensembles reproduce
Haar means and two-copy statistics on the Ehrenfest scale.  The full law mixes
much more slowly.  A phase-adapted path coupling gives normalized Wasserstein
mixing in $O_\varepsilon(N)$ steps, whereas metric-entropy and Haar small-ball
estimates give an $\Omega_\varepsilon(N/\log N)$ lower bound and asymptotically
maximal distance at every $t=o(N/\log N)$.  The law remains singular with
respect to Haar measure for every $t<N$.  The moment results and the
Wasserstein upper bound are driven by the unistochastic Markov kernel
associated with $B_N$, an affine dyadic chain whose nonconstant Fourier modes
vanish exactly after $k$ steps.  The resulting hierarchy separates
Ehrenfest-scale randomization of state ensembles from near-Heisenberg-scale
exploration of the full unitary group.}

\newcommand{\T}{\mathbb T}
\newcommand{\E}{\mathbb E}
\newcommand{\Pp}{\mathbb P}
\newcommand{\C}{\mathbb C}
\newcommand{\Z}{\mathbb Z}
\newcommand{\U}{\mathrm U}
\newcommand{\HS}{\mathrm{HS}}

\newcommand{\Tr}{\operatorname{Tr}}
\newcommand{\diag}{\operatorname{diag}}
\newcommand{\Ad}{\operatorname{Ad}}
\newcommand{\Sym}{\operatorname{Sym}}

\newcommand{\diam}{\operatorname{diam}}
\newcommand{\cH}{\mathcal H}
\newcommand{\cG}{\mathcal G}
\newcommand{\cD}{\mathcal D}
\newcommand{\cF}{\mathcal F}
\newcommand{\cM}{\mathcal M}

\newcommand{\one}{\mathbf 1}

\begin{document}

\section{Introduction}

The classical baker map is a canonical model of chaotic dynamics.  It
stretches phase space in one direction, compresses it in the other, and folds
the result back onto itself.  In binary coordinates, this action becomes a
shift, making exponential instability and mixing unusually transparent.  The
Balazs--Voros quantum baker map replaces this classical evolution by a
finite-dimensional unitary operator and has become a standard testbed for
asking how classical chaos manifests itself in quantum dynamics
\cite{BalazsVoros1989,Saraceno1990}.

Much of the classical transparency is lost after quantization.  Quantum
evolution is unitary rather than dissipative, and interference replaces the
classical transport of probability densities.  Moreover, semiclassical
correspondence controls the dynamics only until evolving structures reach
quantum resolution, at the Ehrenfest time.  Even for the baker map, rigorous
results on quantum variance, eigenstates, and spectral projections require
substantial semiclassical analysis
\cite{DegliEspostiNonnenmacherWinn2006,AnantharamanNonnenmacher2007,Shou2025}.
An alternative way to probe the dynamics is to inject controlled randomness
and ask how the quantum baker propagates it.

Schack and Caves took this approach in their study of hypersensitivity to
perturbations \cite{SchackCaves1993}.  At each iteration, they apply the baker
propagator and then choose a fresh diagonal perturbation.  The phases in their
model belong to a finite, spatially correlated family determined by
perturbation cells; the resulting state ensemble and the information needed
to describe it are the main objects of study.  For selected initial states and
finite dimensions, their numerical experiments produced position-basis
entropies and pairwise-angle statistics close to those of uniformly random
state vectors.  They also found that the information needed to track the
perturbation history can greatly exceed the entropy produced by averaging over
that history.

Subsequent work developed this information-theoretic picture.  Schack and
Caves extended the state-ensemble diagnostics to several quantum maps
\cite{SchackCaves1996}.  Scott, Brun, Caves, and Schack later compared
hypersensitivity, entropy growth, and fidelity decay across a family of quantum
bakers; short-time analysis and later-time numerics indicated that
hypersensitivity distinguishes the nontrivial bakers from an extremal
shift-like quantization more effectively than the other two diagnostics
\cite{ScottBrunCavesSchack2006}.  Rigorous results are available for related
coarse-grained dynamics: Soklakov and Schack proved asymptotically linear
entropy production, at the classical Kolmogorov--Sinai rate, in a
decoherent-histories model of the quantum baker \cite{SoklakovSchack2002}.

Taken together, these works show that baker dynamics can amplify weak,
structured perturbations into widely dispersed ensembles of states, make the
information required to resolve the perturbation history far exceed the
entropy produced by ignoring it, and, under suitable coarse-graining,
reproduce the classical Kolmogorov--Sinai rate of entropy production.  They
thereby support hypersensitivity to perturbations as an information-theoretic
signature of quantum chaos.  Each perturbation history determines an
accumulated random propagator, and the successive products form a random walk
on $\U(N)$.  To our knowledge, however, this walk has been studied only through
the ensembles of states obtained from selected initial conditions, not through
the distribution of the propagator itself on the unitary group.  In particular,
the existing work does not provide dimension-dependent convergence estimates
for either its low-order unitary moments or its full probability law.

We retain the same repeated ``baker plus refreshed diagonal perturbation''
architecture, but use stronger noise: at every iteration, each position basis
state receives an independent phase, uniform on the circle.  Our model is
therefore not a weak-noise approximation to the deterministic baker.  It is a
maximally phase-randomized and exactly tractable endpoint of the same
architecture.  This stronger noise makes sharp results possible across
several levels of observation; it is also the main limitation on how directly
those results apply to weakly perturbed or deterministic quantum systems.

Against this background, we ask how long the baker dynamics takes to spread
basis-local randomness until the accumulated evolution resembles a globally
random unitary.  A diagonal kick changes phases but leaves position
probabilities unchanged.  The baker step spreads those phases among position
states, the next kick refreshes them, and repeated alternation progressively
randomizes the propagator.

We study this randomization at three levels.  The first two retain the
state-ensemble viewpoint: for every initial density matrix, one-copy mixing
controls the ensemble-averaged state, while two-copy mixing controls
fluctuations, purities, and other quadratic statistics of the ensemble.  Our
estimates hold uniformly over the input density matrix.  At the strongest level, we
go beyond state ensembles and study the full distribution of the accumulated
propagator on $\U(N)$.  Thus the paper both quantifies state-ensemble
randomization and addresses the additional propagator-level question.  How
soon the evolution appears random depends on which of these levels an observer
can access: the full law retains substantially more information than its
first two moments.

We now formulate this evolution.  Write
$\U(N)=\{U\in M_N(\C):U^*U=I\}$ for the group of $N\times N$ unitary
matrices, where $M_N(\C)$ is the space of $N\times N$ complex matrices and
$A^*$ denotes the conjugate transpose of $A$.  Let $N=2^k$, $k\ge1$, let $F_N$ be the
periodic discrete Fourier transform, and set
\[
 B_N=F_N^*\begin{pmatrix}F_{N/2}&0\\0&F_{N/2}\end{pmatrix}
\]
for the Balazs--Voros one-step quantum propagator.  Starting from an arbitrary
deterministic $X_0\in\U(N)$, define
\begin{equation}\label{eq:walk-intro}
 X_{t+1}=D_{\xi_{t+1}}B_NX_t,
 \qquad
 D_\xi=\diag(e^{i\xi_0},\ldots,e^{i\xi_{N-1}}),
\end{equation}
where all scalar phases are independent and uniform on
$\T=\mathbb R/(2\pi\mathbb Z)$.  Thus $B_N$ transports amplitudes according
to the baker dynamics, $D_{\xi_t}$ injects fresh basis-local phase noise, and
$X_t$ is the accumulated noisy propagator.  Haar measure---the unique
probability measure on $\U(N)$ invariant under left and right multiplication
by fixed unitaries---is stationary, so it supplies the canonical benchmark
for completely unstructured unitary randomness.  Yet one step injects only
$N$ real random parameters into the
$N^2$-dimensional group.  Strong noise therefore does not make the complete
propagator immediately Haar-random.

Phase averaging exposes a classical Markov chain on the position states.  Its
transition kernel is
\[
 P_N(a,b)=|(B_N)_{ba}|^2.
\]
Since $B_N$ is unitary, both the row sums and the column sums of $P_N$ are
one, so $P_N$ is bistochastic.  More specifically, it is unistochastic: its
entries are the squared moduli of the entries of a unitary matrix.  After the
phases have been averaged out, $P_N$ describes how the expected position
populations are transported by one application of $B_N$.  It is a noisy
doubling map on $\Z_N$: its Fourier multiplier is a
discrete tent function, and dyadic stretching sends every nonconstant
frequency to a zero of that multiplier in at most $k$ steps.  Consequently
$P_N^k=\Pi$ exactly, with $\Pi$ the uniform kernel.  This chain is the common
mechanism behind our results.  It gives exact first-moment mixing,
organizes the two-copy calculation, and controls the collision process in the
Wasserstein coupling.

The three levels are formalized by comparison with Haar randomness.  At the
first level, we ask when averaging $X_tAX_t^*$ gives the Haar answer for every
matrix $A$.  This is the unitary $1$-design question and tests mean states and
observables.  At the second, we ask the analogous question on two tensor
copies, measured in diamond norm.  This $2$-design question tests a broad class
of fluctuations and quadratic statistics.  At the third, we compare the
complete law of $X_t$ with Haar measure in Wasserstein and total variation
distance.  Although these notions come from different communities, they all
measure how far the baker dynamics has propagated the injected phase
randomness, but at different levels of resolution.  The one- and two-copy
comparisons retain only the corresponding low-order moments, whereas
Wasserstein and total variation compare the full probability laws.

The resulting hierarchy separates two physical time scales.  One-copy
information is erased exactly after $k+1$ steps, and two-copy information is
erased to fixed accuracy in $\Theta(\log N)$ steps.  Both occur on the
Ehrenfest scale: the classical baker doubles spatial frequencies at every
iteration, so frequencies of order one reach the quantum scale $N$ after
$\log_2 N$ steps.  By contrast, the complete propagator law remains nearly
maximally far from Haar for $t=o(N/\log N)$ and cannot even be absolutely
continuous before $N$ steps.  Its mixing window is therefore near the
Heisenberg scale, which is set by the inverse mean eigenphase spacing and is
of order $N$.  In this sense the
baker spreads enough of the injected noise to randomize low-order experiments
long before it has spread enough to randomize the full propagator law.

The questions studied here belong to two related literatures.  Probability on
compact groups studies convergence of convolution powers and quantitative transport
to Haar measure \cite{Varju2013,Borda2021}.  Quantum-information theory uses
unitary designs to retain finitely many Haar moments without requiring the
complete distribution to be Haar
\cite{DankertEtAl2009,HarrowLow2009,BrandaoHarrowHorodecki2016}.  Alternating
diagonal randomness in complementary bases is known to produce designs
efficiently \cite{NakataEtAl2017TwoDesigns,NakataEtAl2017Pseudorandomness};
the baker propagator studied here is different because its associated
unistochastic kernel is highly nonuniform and has exact zeros.  The use of
squared unitary matrix elements as a classical transition kernel has
precedents in quantum chaos.  Tanner related the spectral statistics of
unitary ensembles with prescribed squared moduli to the mixing properties of
the associated Markov chains \cite{Tanner2001}, while Berkolaiko, Keating, and
Smilansky used such kernels to construct quantum graphs from interval maps and
prove quantum ergodicity \cite{BerkolaikoKeatingSmilansky2007}.  Those works
connect a fixed unitary, or a static ensemble of unitaries, to an underlying
classical chain through spectral and eigenstate questions.  Here the role of
the chain is dynamical: after phase averaging, it governs the successive
moments of an accumulated random product.

For the baker map, the connection is especially transparent.  In
deterministic semiclassics, dyadic frequency doubling generates progressively
finer scales until it reaches quantum resolution, marking the usual limit of
Egorov correspondence.  In the present stochastic model, exact phase
averaging converts the same frequency cascade into finite-time first-moment
equilibration.

\subsection{Main results}

Write $H_N$ for Haar probability on $\U(N)$ and $\nu_{N,t}^{X_0}$ for the law
of $X_t$.  We use the normalized Frobenius metric
\[
 \rho_N(U,V)=N^{-1/2}\|U-V\|_{\HS}.
\]
Here
$\|A\|_{\HS}=(\Tr A^*A)^{1/2}$ is the Hilbert--Schmidt, or Frobenius, norm.
This normalization is natural in a sequence of growing unitary groups.  It
keeps typical Haar distances of order one and has the interpretation
\[
 \rho_N(U,V)^2=\frac1N\sum_{j=1}^N\|(U-V)e_j\|^2,
\]
the mean squared discrepancy over an orthonormal family of input states.
Nothing in the proofs depends essentially on this choice.  For the ordinary
Frobenius metric $d_F=\|\cdot\|_{\HS}=\sqrt N\,\rho_N$, however, fixed
accuracy is finer by a factor $N^{-1/2}$ in normalized units.  Thus
\eqref{eq:full-upper} gives the corresponding fixed-accuracy upper bound
$O_\varepsilon(N\log N)$ rather than $O_\varepsilon(N)$; the extra logarithm
is the number of constant-factor contraction blocks needed to reduce an
order-$\sqrt N$ initial distance to order one.
For a metric $d$, $W_{p,d}$ denotes the order-$p$ Wasserstein distance,
\[
 W_{p,d}(\mu,\nu)
 =\inf_{(X,Y)}\bigl(\E d(X,Y)^p\bigr)^{1/p},
\]
where the infimum is over all couplings with $X\sim\mu$ and $Y\sim\nu$.
We use the convention
$\|\mu-\nu\|_{\mathrm{TV}}=\sup_A|\mu(A)-\nu(A)|$ for total variation
distance.
The first theorem concerns the complete probability law.

\begin{theorem}[Full-law Wasserstein mixing]\label{thm:full-upper}
For every dyadic $N$, every $X_0\in\U(N)$, and every $t\ge0$,
\begin{equation}\label{eq:full-upper}
 W_{2,\rho_N}(\nu_{N,t}^{X_0},H_N)
 \le \pi\,2^{-\frac12\lfloor t/(4N+1)\rfloor}.
\end{equation}
Consequently, for every fixed $\varepsilon>0$, the normalized Wasserstein
mixing time is $O_\varepsilon(N)$.
\end{theorem}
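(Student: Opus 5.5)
Since Haar measure $H_N$ is stationary for the walk, $W_{2,\rho_N}(\nu_{N,t}^{X_0},H_N)$ is bounded by the Wasserstein distance between two copies of the walk, one started from $X_0$ and one from a Haar-distributed $Y_0$, run under a common coupling. We build a \emph{phase-adapted path coupling}. Write $X_t=D_{\xi_t}B_N X_{t-1}$ and $Y_t=D_{\eta_t}B_N Y_{t-1}$. Let $u_j=(B_NX_{t-1})_{j\cdot}$ and $v_j=(B_NY_{t-1})_{j\cdot}$ be the $j$th rows before the kick. We choose $\eta_{t,j}$ as a measure-preserving function of $\xi_{t,j}$ and the past, namely $\eta_{t,j}=\xi_{t,j}+\theta_j$. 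Here $\theta_j$ is the phase that minimizes $\|e^{i\xi}u_j-e^{i\eta}v_j\|$, so $e^{i\theta_j}\langle v_j,u_j\rangle\ge0$. This gives
\[
 \|e^{i\xi_{t,j}}u_j-e^{i\eta_{t,j}}v_j\|^2=\|u_j\|^2+\|v_j\|^2-2|\langle u_j,v_j\rangle|.
\]
Because $\theta_j$ is measurable with respect to the past and $\xi_{t,j}$ is uniform, $\eta_t$ has the correct law, so $(Y_t)$ is again a walk from Haar. Since the distance depends only on the rows, $\rho_N(X_t,Y_t)^2=N^{-1}\sum_j\|\text{row}_j\|^2$ is left-invariant. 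The distance is therefore nonincreasing: the $B_N$ step preserves it, and aligning phases can only lower it.

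To get strict contraction, we track the distance row by row. Set $R_{t-1}=X_{t-1}Y_{t-1}^*$. Then the minimum above equals $2-2|(B_N R_{t-1}B_N^*)_{jj}|$, since the rows of unitaries are unit vectors. So the squared distance equals $2-\frac2N\sum_j|(M_t)_{jj}|$, where $M_t=B_NR_{t-1}B_N^*$, and the coupling replaces $R$ by $D'M_tD'^*$ with phases making the diagonal nonnegative. The central quantity is $\sum_j|R_{jj}|$. We show that within each block of $4N+1$ steps, the deficit $N-\sum_j|R_{jj}|$ halves with constant probability, or in expectation. Taking expectations gives
\[
 \E\rho_N^2\le 2\cdot 2^{-\lfloor t/(4N+1)\rfloor}\cdot(\text{initial}),
\]
with initial squared distance at most $4$. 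This yields a bound of order $2\cdot2^{-\frac12\lfloor\cdot\rfloor}$. The constant $\pi$ comes instead from a sharper initial bound on the expected distance: $\sqrt{\E\rho_N^2}$ for a Haar pair combined with geodesic-versus-chordal considerations, where one may use the phase realignment at time $0$.

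The main obstacle is the contraction mechanism. Here the unistochastic kernel $P_N$ enters as a collision process. We couple the fresh phases componentwise, but for the $N$ row-distance contributions we instead use a stronger \emph{synchronous} coupling on rows. Once row $j$ of $X$ and $Y$ agree after alignment, the agreement propagates along $P_N$-transitions. Disagreements behave like particles that move according to $P_N$ and merge, annihilating when they land on aligned positions. Concretely, we would decompose the deficit via $|M_{jj}|\ge\sum_a P_N(a,j)\,|R_{aa}|-(\text{off-diagonal cross terms})$. The cross terms average to zero under the independent uniform phases of the preceding kick, so that in expectation the deficit vector evolves by $P_N$ up to a sub-stochastic loss. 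Using $P_N^k=\Pi$ together with the exact zeros of the tent multiplier, each unit of deficit meets a coalescing partner within $O(N)$ steps with probability at least $1/2$. This would be a coupon-collector or meeting-time estimate for two $P_N$-walkers, which mix in $k$ steps and then meet with probability of order $1/N$ per step. That accounts for the block length $4N+1$. Making the cross-term cancellation rigorous, rather than merely heuristic, is where I expect most of the work. Finally, the mixing-time statement follows by solving $\pi 2^{-\frac12\lfloor t/(4N+1)\rfloor}\le\varepsilon$, which gives $t=O(N\log(1/\varepsilon))$.
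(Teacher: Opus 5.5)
There is a genuine gap: the contraction step, which is the whole content of the theorem, is not established, and the inequality you propose cannot establish it.

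\textbf{What is fine.} Your phase-aligned coupling is legitimate. The shifts $\theta_j$ are predictable, so $(Y_t)$ is a genuine copy of the walk. The identity $\rho_N(X_t,Y_t)^2=2-\frac2N\sum_j|(M_t)_{jj}|$, together with $\sum_j|(M_t)_{jj}|\ge|\Tr M_t|=\sum_j(R_{t-1})_{jj}$, gives monotonicity. This is a global version of the paper's phase correction.

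\textbf{Why your inequality gives no contraction.} Averaging out the cross terms gives
\[
 \E|M_{jj}|\ge|\E M_{jj}|=\sum_aP_N(a,j)R_{aa}.
\]
Summing over $j$ and using bistochasticity returns $\E\sum_j|M_{jj}|\ge\sum_aR_{aa}$. That is exactly the monotonicity you already had, with no gain. Any decrease of the deficit comes from the Jensen gap $\E|M_{jj}|-|\E M_{jj}|$, which is second order in the off-diagonal entries $R_{ab}$, $a\ne b$. By discarding the cross terms you discard the mechanism.

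\textbf{The wrong object carries the collisions.} The collision process does not act on the one-index deficit vector. It acts on ordered pairs $(a,b)$ labelling off-diagonal entries. After phase averaging, their squared moduli are transported by two independent $P_N$-chains. They are removed when the chains meet, because the entry has then become diagonal and is absorbed by the phase alignment. Your picture of agreeing rows propagating is also wrong. Row $i$ of $B_NX$ combines every row $j$ with $P_N(j,i)>0$, so one disagreeing row contaminates almost every row after one step. There is no coalescence, only gradual contraction.

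\textbf{The global evolution is not linear.} Even with the right object, your coupling is global, and globally the off-diagonal energy does not follow the killed-pair chain. Write $Q$ for the off-diagonal projection and $d$ for the (nonnegative) diagonal of $R_t$. A one-step computation for your coupling gives
\[
 \E\bigl[\|QM_{t+1}\|_{\HS}^2\,\big|\,X_{t-1},Y_{t-1}\bigr]
 =\sum_{a\ne b}|(R_t)_{ab}|^2\Bigl(1-\sum_jP_N(a,j)P_N(b,j)\Bigr)
 +\|d\|^2-\|P_N^{\mathsf T}d\|^2 .
\]
The last term is a nonnegative source. It is $O(h^4)$ for nearby points but can be of the order of the distance itself for distant ones, for instance when $R_t$ swaps two basis vectors. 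Moreover, the deficit $\sum_j(1-d_j)$ and the energy $\sum_j(1-d_j^2)$ differ by up to a factor $2$. So nothing in your sketch gives halving of $\E\rho_N^2$ over $4N+1$ steps for arbitrary pairs. The clean collision picture is only a first-order statement.

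\textbf{How the paper proceeds instead.} It works infinitesimally throughout.
\begin{itemize}
\item For $Y=e^{hA}X$, it cancels the diagonal of the transported tangent $B_NR_{s-1}B_N^*$ by a predictable triangular phase shift (Lemma~\ref{lem:triangular-phase-shifts}).
\item It proves the exact identity $\E\|R_{s+1}\|_{\HS}^2=\sum_{a\ne b}|(Z_1)_{ab}|^2\Pp_{a,b}(\tau_0>s)$ for the paper's tangent residual $R_s$ (Lemma~\ref{lem:phase-average}).
\item It uses $\E_d\tau_0<2N$ and Markov's inequality to get the factor $\frac12$ over $4N+1$ steps, and controls the $O_N(h^3)$ error.
\item It then applies Oliveira's local-to-global path-coupling theorem in the geodesic metric $D$ to obtain Proposition~\ref{prop:block-contraction} for arbitrary laws.
\end{itemize}
That is also where $\pi$ comes from. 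It equals $\diam(\U(N),D)/\sqrt N$, the price of contracting $D$ rather than the chordal metric, and is not a sharper initial estimate; it is larger than the constant your own bookkeeping would give.

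\textbf{How to repair your route.} You need either a genuinely global potential that absorbs the source term, or a restriction to infinitesimal displacements plus a local-to-global step.
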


At fixed normalized accuracy, the upper bound is optimal up to a logarithmic
factor.

\begin{theorem}[Full-law obstructions]\label{thm:full-lower}
Uniformly over deterministic starting points,
\begin{equation}\label{eq:maximal-lower}
 t=o(N/\log N)
 \quad\Longrightarrow\quad
 W_{1,\rho_N}(\nu_{N,t}^{X_0},H_N),
 W_{2,\rho_N}(\nu_{N,t}^{X_0},H_N)\longrightarrow\sqrt2.
\end{equation}
Moreover,
\begin{equation}\label{eq:tv-singular}
 \|\nu_{N,t}^{X_0}-H_N\|_{\mathrm{TV}}=1,
 \qquad 0\le t<N.
\end{equation}
For every fixed $\varepsilon>0$, there are $c_\varepsilon>0$ and
$N_\varepsilon<\infty$ such that, in the unnormalized Frobenius metric
$d_F(U,V)=\|U-V\|_{\HS}$,
\begin{equation}\label{eq:absolute-lower-theorem}
 t\le c_\varepsilon N,\quad N\ge N_\varepsilon
 \quad\Longrightarrow\quad
 W_{1,d_F}(\nu_{N,t}^{X_0},H_N),
 W_{2,d_F}(\nu_{N,t}^{X_0},H_N)\ge\varepsilon.
\end{equation}
\end{theorem}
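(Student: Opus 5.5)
The statement has three parts, and each follows from the small-ball and dimension structure of the support of $\nu_{N,t}^{X_0}$. The law of $X_t$ is the pushforward of Haar measure on $\T^{Nt}$ under the smooth map
\[
\Phi_t(\xi_1,\dots,\xi_t)=D_{\xi_t}B_N\cdots D_{\xi_1}B_NX_0 .
\]
So the support $S_t$ is the image of a torus of dimension $Nt$ inside the $N^2$-dimensional manifold $\U(N)$.

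For \eqref{eq:tv-singular}, I will use a dimension count. Each step contributes phases, but a global phase is shared between consecutive kicks, so the image has dimension at most $Nt-(t-1)<N^2$ whenever $t<N$; even the crude bound $Nt<N^2$ suffices. By Sard's theorem, $S_t$ has Haar measure zero. Since $\nu_{N,t}^{X_0}(S_t)=1$ while $H_N(S_t)=0$, the total variation distance equals $1$.

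For \eqref{eq:maximal-lower} and \eqref{eq:absolute-lower-theorem}, I will use a covering argument. With respect to $\rho_N$, the map $\Phi_t$ is Lipschitz with constant of order $1$ in each block of phases. Indeed, changing $\xi_s$ by $\delta$ in sup norm moves the product by at most $\delta$ in operator norm, hence by at most $\delta$ in $\rho_N$, and the errors add over steps. Therefore $S_t$ can be covered by $M\le (Ct/\eta)^{Nt}$ $\rho_N$-balls of radius $\eta$. On the Haar side, I need a small-ball estimate: for fixed $V$,
\[
H_N\bigl(\rho_N(U,V)\le r\bigr)\le (C'r)^{cN^2},
\]
for $r$ below a constant, and also $H_N(\rho_N(U,V)\le\sqrt2-\delta)\le e^{-c_\delta N^2}$. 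This concentration follows from the fact that $\Re\Tr(UV^*)/N$ is $O(N^{-1})$-Lipschitz, giving Gaussian concentration at scale $e^{-cN^2\delta^2}$, together with $\rho_N^2=2-2\Re\Tr(UV^*)/N$. Given these, the $\eta$-neighbourhood of $S_t$ has Haar mass at most $(Ct/\eta)^{Nt}e^{-c_\delta N^2}$. This tends to $0$ once $Nt\log(t/\eta)=o(N^2)$, which is exactly the condition $t=o(N/\log N)$. Consequently, in any coupling, $\rho_N(X,Y)\ge\sqrt2-2\delta$ with probability $1-o(1)$, so $W_1\ge\sqrt2-o(1)$. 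For the matching upper bound, take the independent coupling: $\E\rho_N^2=2$ exactly because Haar has $\E U=0$, which gives $W_2\le\sqrt2$. Then $W_1\le W_2\le\sqrt2$ completes \eqref{eq:maximal-lower}.

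For the unnormalized statement, I will use the same scheme with $d_F$-balls of radius $\varepsilon'$ and the small-ball bound $H_N(d_F(U,V)\le r)\le(Cr/\sqrt N)^{N^2}$ near any point. A covering of $S_t$ by $(C\sqrt N t/\varepsilon')^{Nt}$ $d_F$-balls then has Haar mass at most
\[
\exp\bigl(Nt\log(C\sqrt N t)-N^2\log(\sqrt N/C\varepsilon')\bigr).
\]
With $t\le c_\varepsilon N$, both logarithms are of order $\log N$, so the exponent is at most $N^2\log N\,(c_\varepsilon\cdot\tfrac32-\tfrac12)+O(N^2)\to-\infty$ for small $c_\varepsilon$. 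Hence, under any coupling, with probability at least $1/2$ the Haar point lies at $d_F$-distance greater than $\varepsilon'$ from $S_t$, which yields $W_1\ge\varepsilon'/2$; take $\varepsilon'=2\varepsilon$.

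The main obstacle is this last small-ball bound. It requires a uniform local volume estimate for Haar balls in $\U(N)$ at scales $r\ll\sqrt N$, with the constant tracked through the exponential map. I will first derive it from comparison with Lebesgue measure on the Lie algebra, since the exponential map has Jacobian bounded by $1$. The other delicate point is controlling $\log t$ against $\log N$ at the boundary $t\asymp N$.
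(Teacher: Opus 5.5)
Your proposal is correct and uses the paper's overall scheme. A telescoping Lipschitz bound gives a covering of the reachable set $S_t$ by $(Ct/\eta)^{Nt}$ balls. Sard's theorem with $Nt<N^2$ gives \eqref{eq:tv-singular}. The covering, combined with Haar small-ball bounds, forces every coupling to move almost all mass by nearly $\sqrt2$, or, in the unnormalized case, half its mass by $2\varepsilon$. The independent coupling supplies $W_2\le\sqrt2$.

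The real difference is in how you prove the Haar small-ball estimates of Lemma~\ref{lem:haar-small-balls}. The paper uses column exposure for both. A normalized ball forces a positive fraction of the diagonal entries to satisfy $\operatorname{Re}U_{jj}\ge b$. A fixed Frobenius ball forces $N/2$ of them into caps of size $O(N^{-1/2})$. The $\operatorname{Beta}(1,d-1)$ tail and a union bound over $2^N$ index sets then give $e^{-c_RN^2}$ and $e^{-c_rN^2\log N}$. That argument is elementary and self-contained.

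You instead use measure concentration for $\operatorname{Re}\Tr(UV^*)$ and a Riemannian volume comparison. Both are valid, but each imports more than you state.
\begin{itemize}
\item Concentration on $\U(N)$ is not the plain Bakry--\'Emery statement, because the central circle direction is flat. You need the Meckes--Meckes theorem for $\U(N)$, or a reduction to $\mathrm{SU}(N)$ by factoring out the center.
\item For the fixed-radius ball, the Jacobian bound $\le1$ for $\exp$ together with $D\le\frac{\pi}{2}d_F$ controls only the numerator $\omega_{N^2}(\pi r/2)^{N^2}$. You also need the total Hilbert--Schmidt volume, $\log\operatorname{vol}\U(N)=-\tfrac12N^2\log N+O(N^2)$. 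This comes from Macdonald's formula, or from the fibration $\U(N)\to S^{2N-1}$. The $\log N$ gain comes entirely from the ratio $\omega_{N^2}/\operatorname{vol}\U(N)$, so this input is essential, not cosmetic.
\end{itemize}
In exchange you get the sharp exponent $\tfrac12N^2\log N$, and hence any $c_\varepsilon<1/3$.

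Two minor slips:
\begin{itemize}
\item $U\mapsto\operatorname{Re}\Tr(UV^*)/N$ is $N^{-1/2}$-Lipschitz for $d_F$, not $O(N^{-1})$. With concentration of the form $e^{-cNs^2/L^2}$, this is exactly what yields your $e^{-c\delta^2N^2}$.
\item The set whose Haar mass you must bound is the $(\sqrt2-2\delta)$-neighbourhood of $S_t$, covered by $(\sqrt2-\delta)$-balls around a $\delta$-net. It is not the $\eta$-neighbourhood.
\end{itemize}
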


The limit $\sqrt2$ in \eqref{eq:maximal-lower} is extremal: independent
coupling with Haar gives $W_{2,\rho_N}(\mu,H_N)\le\sqrt2$ for every probability
measure $\mu$ on $\U(N)$, since the expected squared cost of that coupling is
$2-2N^{-1}\operatorname{Re}\Tr(U^*\E V)=2$.

We next turn to low-order probes.  For a probability measure
$\mu$ on $\U(N)$ and $m\in\{1,2\}$, define its $m$-copy averaged
conjugation channel by
\begin{equation}\label{eq:twirl-def}
 \cG_\mu^{(m)}(A)=\int_{\U(N)}U^{\otimes m}A(U^*)^{\otimes m}\,d\mu(U).
\end{equation}
Here $U^{\otimes m}=U\otimes\cdots\otimes U$ is the $m$-fold tensor product,
acting on $(\C^N)^{\otimes m}$.
In quantum-information terminology this channel is the $m$-copy
\emph{twirl}; it averages the conjugation of an observable over the unitary
ensemble.  A measure is an exact unitary $m$-design when its twirl agrees
with the Haar-averaged channel, or Haar twirl.  Approximate designs will be
measured by the diamond norm of the difference.  Write $M_D(\C)$ for the
space of $D\times D$ complex matrices.  For a linear map
$\Phi:M_D(\C)\to M_D(\C)$, we use
\begin{equation}\label{eq:diamond-definition}
 \|\Phi\|_\diamond
 =\sup_{d\ge1}\|\Phi\otimes\mathrm{id}_d\|_{1\to1},
 \qquad
 \|\Psi\|_{1\to1}
 =\sup_{A\ne0}\frac{\|\Psi(A)\|_1}{\|A\|_1},
\end{equation}
where $\|A\|_1=\Tr\sqrt{A^*A}$ is the trace norm.  It is enough to take
$d=D$.  Here $\Phi\otimes\mathrm{id}_d$ is the tensor product of linear maps,
where $\mathrm{id}_d$ is the identity map on $M_d(\C)$; on elementary tensors,
\[
 (\Phi\otimes\mathrm{id}_d)(A\otimes B)=\Phi(A)\otimes B,
\]
and the action on all matrices is determined by linearity.  The auxiliary
identity map allows the input to be correlated with an untouched ancillary
system; taking $d=1$ recovers the ordinary induced trace norm.  More
generally, $\|\Psi\|_{p\to p}$ denotes the norm induced by the
Schatten $p$-norm: the trace norm for $p=1$, the Hilbert--Schmidt norm for
$p=2$, and the usual matrix operator norm for $p=\infty$.  For $m=1$,
equality with the Haar twirl says concretely that
\[
 \int UAU^*\,d\mu(U)=\frac{\Tr A}{N}I
\]
for every $A\in M_N(\C)$.

\begin{theorem}[Low-order moment mixing]\label{thm:design-main}
The law $\nu_{N,t}^{X_0}$ is an exact unitary $1$-design if and only if
$t\ge k+1$.  Moreover, with
\[
 c_*:=\prod_{j=1}^{\infty}(1-2^{-j})>0,
\]
one has, for $1\le t\le k$,
\begin{equation}\label{eq:design-lower-main}
 \|\cG_{\nu_{N,t}^{X_0}}^{(m)}-\cG_{H_N}^{(m)}\|_\diamond\ge c_* ,
 \qquad m=1,2.
\end{equation}
There are absolute constants $A,C>0$ and $N_0<\infty$ such that, for every
dyadic $N\ge N_0$ and $0<\varepsilon<1$,
\begin{equation}\label{eq:design-time}
 t\ge A\log N+C\log(\varepsilon^{-1})
 \quad\Longrightarrow\quad
 \|\cG_{\nu_{N,t}^{X_0}}^{(2)}-\cG_{H_N}^{(2)}\|_\diamond
 \le\varepsilon.
\end{equation}
The constants are independent of the deterministic starting point $X_0$.
\end{theorem}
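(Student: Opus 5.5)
We reduce everything to the unistochastic kernel $P_N$, using phase averaging. Write $\cG_t^{(m)}$ for the twirl of $\nu_{N,t}^{X_0}$. Since the $X_0$ conjugation is a fixed unitary channel, it can be absorbed on the input side; it does not affect the diamond distance to the Haar twirl, which is invariant. So it suffices to study the product of the one-step channels $\Phi^{(m)}(A)=\E[(D_\xi B_N)^{\otimes m}A(\cdot)^*]$.

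\emph{First moment.} For $m=1$, averaging over the phases kills all off-diagonal matrix units: $\E[D_\xi E_{ab}D_\xi^*]=\delta_{ab}E_{aa}$. Thus after one step the output is diagonal, with diagonal vector obtained by applying $P_N$ (in the appropriate orientation) to the diagonal of $B_NAB_N^*$. After $t\ge1$ steps,
\[
\cG_t^{(1)}(A)=\diag\bigl(P_N^{t-1}\,\mathrm{dg}(B_NX_0AX_0^*B_N^*)\bigr).
\]
By the Fourier analysis of $P_N$ (tent multiplier, dyadic doubling), $P_N^{k}=\Pi$. Hence $t\ge k+1$ gives $\frac{\Tr A}{N}I$, which is the $1$-design property. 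For $t\le k$ we prove $P_N^{t-1}\neq\Pi$ quantitatively. Choose a frequency $\omega$ whose orbit under doubling survives $t-1$ steps with nonzero multipliers: for instance $\omega$ odd, or $\omega=1$, with multiplier product $\prod_{j}\hat p(2^j)$. The tent values give factors like $(1-2^{-j})$, whose product is bounded below by $c_*$.

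Take $A$ such that $B_NX_0AX_0^*B_N^*$ is the pure state $\psi$ whose diagonal is concentrated on one point; the diagonal of the output then carries the $\omega$-Fourier mode with amplitude at least $c_*$ times the initial one. Comparing with $I/N$ via the trace-norm dual pairing against the test observable $\cos(2\pi\omega\cdot/N)$ gives $\|\cG_t^{(1)}-\cG_H^{(1)}\|_{1\to1}\ge c_*$. Some care with normalization is needed: the trace distance between a diagonal density and the uniform one should be bounded below by the Fourier coefficient. The $t=0$ case is excluded, and $m=2$ follows by feeding $A\otimes I/N$, since the partial trace is contractive.

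\emph{Second moment.} After phase averaging, $E_{ab}\otimes E_{cd}$ survives only when $\{a,c\}=\{b,d\}$ as multisets. The image of $\Phi^{(2)}$ is therefore spanned by the diagonal terms $E_{aa}\otimes E_{cc}$ and the swap-type terms $E_{ac}\otimes E_{ca}$. On this invariant space of dimension about $2N^2$, the channel acts as a Markov-like operator $Q$ built from $|B_{ba}|^2|B_{dc}|^2$ together with the cross terms $B_{ba}\overline{B_{bc}}B_{dc}\overline{B_{da}}$. The Haar fixed space is spanned by $I$ and the swap $F$. The plan is to show that $Q$ contracts the complement of $\mathrm{span}\{I,F\}$ by a constant factor every $O(\log N)$ steps, in a norm comparable to the diamond norm up to a polynomial factor $\mathrm{poly}(N)$. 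That polynomial loss costs an additional $O(\log N)$ steps, consistent with the claimed $A\log N+C\log\varepsilon^{-1}$.

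Concretely, write the two-point process (pairs $(a,c)$) as two copies of the $P_N$ chain, coupled only through the interference terms when $a=c$ or along the swap sector. We show that off the diagonal the pair chain factorizes as $P_N\otimes P_N$. This factor equals $\Pi\otimes\Pi$ after $k$ steps, up to the collision-diagonal corrections, and those are of relative size $O(1/N)$ per step. The collision set then behaves like a small leakage, which is controlled by a renewal or Neumann-series argument.

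\emph{Main obstacle.} The hard step is controlling the interaction between the diagonal sector and the swap sector (the $a=c$ collisions), and converting the resulting bound into a diamond norm. We would bound the diamond norm by $N^2$ times the $2\to2$ norm on the invariant subspace, and we would prove the spectral-type contraction by an explicit Fourier computation of the pair kernel on $\Z_N^2$. Dyadic doubling again sends the nonzero frequency pairs to zeros of the product multiplier within $k$ steps, except along a few collision-correlated directions, and we would check by direct estimate that these directions have multiplier bounded away from $1$.
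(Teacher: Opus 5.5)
Your first-moment argument is correct. It is the paper's proof read in the Schr\"odinger rather than the Heisenberg picture. The paper pushes the traceless observable $Z_1$ backward through the adjoint layers and gets operator norm $\prod_{j=0}^{t-2}\widehat q_N(2^j)$. You push the pure state $X_0^*B_N^*E_{x_0x_0}B_NX_0$ forward and use $\|p-u\|_1\ge|\widehat p(1)|$. Two small fixes are needed:
\begin{itemize}
\item take $\omega=1$ specifically, since a general odd frequency can have a tiny multiplier, e.g.\ $\widehat q_N(M-1)=1/M$;
\item use the complex character, or $x_0=0$, because the cosine test can vanish for an unlucky $x_0$.
\end{itemize}
The passage to $m=2$ via $A\mapsto A\otimes I/N$ and the partial trace is also the paper's.

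The two-copy upper bound has a genuine gap. First, your accounting does not give the stated rate. A constant-factor contraction per block of $O(\log N)$ steps, combined with a $\mathrm{poly}(N)$ loss in passing to diamond norm, needs $O(\log(N/\varepsilon))$ blocks, hence $O(\log N\cdot\log(N/\varepsilon))$ steps. To reach $A\log N+C\log\varepsilon^{-1}$ you need contraction by a fixed $\rho<1$ at every step after an $O(\log N)$ burn-in. That is what the paper proves: error $CN\rho^s$ after $k+s$ restricted layers.

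Second, the perturbative picture is not right. In your basis $E_{aa}\otimes E_{cc}$, $E_{ac}\otimes E_{ca}$, the swap-to-swap block is exactly the killed-pair kernel $\cM_N$. Started from a uniform off-diagonal configuration, it still has mass at least $1-s/(N-1)$ after $s$ steps. The population--swap couplings $(B_N)_{xa}\overline{(B_N)_{za}}(B_N)_{zc}\overline{(B_N)_{xc}}$ are small only on average over uniformly placed pairs. Their signed sum over $x\ne z$ is $-H_{ac}$, and $H_{ac}\ge q_N(1)^2\ge 4/\pi^4$ when $|a-c|=1$. These exchange terms are exactly what compensates the killed mass: in the antisymmetric combination, direct mass $1-H_{ab}$ plus exchange $H_{ab}$ gives the bistochastic $P_N^-$. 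So they cannot be treated as a small leakage around the uncoupled blocks.

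The paper removes them exactly rather than perturbatively. In the basis $|s_{ij}\rangle\langle s_{ij}|$, $|a_{ij}\rangle\langle a_{ij}|$, $|s_{ij}\rangle\langle a_{ij}|$ and its adjoint, the restricted layer is block diagonal, $P_N^+\oplus P_N^-\oplus C_N\oplus\overline{C_N}$. The real gauge then identifies $C_N=\Lambda^2(P_N^{\mathsf T})$, so $C_N^k=0$.

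Third, the step you defer to ``a few collision-correlated directions'' is the heart of the proof, and it is missing. Doubling of the total translation frequency does send every population observable into the translation-invariant space $\cH_0$ after $k$ steps. But $\cH_0$ is the $\Theta(N)$-dimensional space of functions of the pair separation, and there $Q_N^\pm$ has no scalar multipliers. A centered cosine mode $m$ goes to the folded mode $2m$ with weight $(1-m/M)^2$, plus a Fej\'er tail on the modes $2\ell$, $\ell<m$, whose sign depends on the sector. In the symmetric sector an extra diagonal-pair term also touches every mode.

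You need a norm in which this non-normal operator contracts uniformly in $N$. The paper uses the weighted $\ell^1$ norm with $w_L(r)=r^{-1/2}\{1+3(r/L)^2\}$. Its column sums converge to explicit functions $R_0,R_+$ with suprema below one, and an operator bound is then recovered from the cosine frame at a cost $O(N)$. Without a replacement for that estimate, the two-copy part of your proposal does not establish the $O(\log N)$ upper bound.
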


More generally, for every fixed $0<\varepsilon<1$, neither the one-copy nor
the two-copy twirl can be within diamond distance $\varepsilon$ of Haar before
$k-O_\varepsilon(1)$ steps.  Thus the fixed-accuracy $2$-design time is
$\Theta_\varepsilon(\log N)$.  Here time counts applications of the complete
noisy baker propagator.  In one step, the global $N$-dimensional baker unitary
is applied and all $N$ diagonal phases are refreshed simultaneously.  The
$O(\log N)$ result therefore concerns the number of noisy dynamical steps; it
is not a circuit-depth bound for implementing the evolution with local gates
on $\log_2N$ qubits.

Combining the theorems gives the hierarchy
\begin{equation}\label{eq:hierarchy}
\begin{array}{c|c}
\text{notion of randomization}&\text{time scale}\\ \hline
\text{exact classical mixing}&\log_2N\\
\text{exact unitary $1$-design}&\log_2N+1\\
\text{$\varepsilon$-approximate unitary $2$-design}&\Theta_\varepsilon(\log N)\\
\text{normalized full-law Wasserstein mixing}
&\Omega_\varepsilon(N/\log N)\ \text{to}\ O_\varepsilon(N)\\
\text{total variation}&\text{maximal for }t<N.
\end{array}
\end{equation}
Since $N$ is inverse effective Planck scale, $\log N$ is the Ehrenfest scale
and $N$ the Heisenberg scale for the map.  More precisely, the classical baker
has Lyapunov exponent $\log2$ per iteration, so its Ehrenfest time is
$t_{\mathrm E}\sim(\log N)/(\log2)=k$, while the $N$-level quasienergy spacing
gives $t_{\mathrm H}\asymp N$.  The conclusion is not that the deterministic
baker is Haar-random, but that repeated fresh phase perturbations erase low
moments much earlier than they erase the global geometry of the propagator
law.

The hierarchy can also be read in terms of what an observer measures.  A
$1$-design gives the Haar answer for every averaged quantity of the form
$X_tAX_t^*$.  A $2$-design also gives the Haar answer, up to the stated error,
for experiments involving two copies of $X_t$; these include many variances,
purities, and other quadratic statistics.  Wasserstein and total variation
ask the much stronger question of whether the entire probability law is
Haar-like.  The walk can therefore look fully random to all one- and two-copy
experiments while its complete distribution remains far from Haar.

\subsection{Proof mechanisms and organization}

The upper bound in Theorem~\ref{thm:full-upper} comes from coupling two copies
of the walk that start at nearby unitary matrices.  Their small separation is
represented by a skew-Hermitian tangent matrix.  At each step we make a small
change to the fresh phases in the second copy.  This change cancels the
diagonal part of the tangent matrix, while the off-diagonal part remains as
the displacement that still has to be removed.

The dyadic structure of $B_N$ supplies the contraction mechanism.  Label an
off-diagonal entry by its two indices $(a,b)$.  After averaging over the fresh
phases, its squared magnitude is transported exactly as if $a$ and $b$ were
two independent $P_N$-chains.  When the two chains meet, the corresponding
entry has become diagonal and is cancelled by the phase adjustment.  The
exact $k$-step mixing of $P_N$ implies that the mean meeting time is less than
$2N$; Markov's inequality then shows that a block of $4N+1$ steps reduces the
expected squared displacement by at least one half.

Two points complete the coupling argument.  First, the phase adjustment at
step $s$ depends only on phases from earlier steps.  The resulting triangular
change of variables preserves the joint product-uniform phase law exactly,
so both coupled trajectories have the correct marginal distribution.
Second, the construction initially contracts only sufficiently close
starting points.  Oliveira's local-to-global Wasserstein theorem
\cite{Oliveira2009} extends the same contraction along paths in $\U(N)$ and
hence to arbitrary starting laws.

Theorem~\ref{thm:full-lower} is geometric.  After $t$ steps the law is supported
on the image of only $Nt$ phase parameters, while $\U(N)$ has dimension
$N^2$.  A quantitative covering estimate for that image, combined with Haar
small-ball bounds, yields \eqref{eq:maximal-lower}; the same dimension mismatch
gives \eqref{eq:tv-singular} directly.

The moment proofs begin with the same operation: average over a fresh set of
diagonal phases.  What survives that average depends on whether we observe
one copy or two.

For one copy, the phase average simply deletes every off-diagonal matrix
entry.  The remaining $N$ diagonal entries form a population vector.
Subsequent averaged baker steps move this vector according to the classical
kernel $P_N^{\mathsf T}$.  Since $P_N^k$ is exactly the uniform kernel, the
first phase average followed by $k$ population transitions gives the Haar
one-copy average.  This proves the upper bound $k+1$.

To see that $k+1$ is also necessary, we start with a particular oscillating
population profile---a Fourier wave around the $N$ basis states.  The matrix $B_N$
doubles its frequency at every step.  Its amplitude stays bounded away from
zero through time $k$; the next transition encounters a zero Fourier
multiplier and removes it.  Thus a one-copy experiment can still distinguish
the walk from Haar through time $k$.  The same obstruction also applies to two copies: if every
two-copy experiment had already reached its Haar value, then ignoring one
copy would give the Haar answer for every one-copy experiment.

For two copies, phase averaging retains more information.  A matrix term
carrying two basis labels survives whenever the same two labels occur on its
input and output sides, possibly in the opposite order.  Passing to symmetric
and antisymmetric combinations of the two orderings separates the surviving
information into four blocks.  Two are ordinary population distributions on
pairs of basis states.  The other two record coherence between the symmetric
and antisymmetric descriptions of a pair.

The coherence blocks disappear first.  After a harmless change of diagonal
phases makes $B_N$ real, their evolution is the exterior square of
the one-particle kernel $P_N^{\mathsf T}$.  Concretely, this exterior square
measures the two-dimensional information carried by a pair of population
vectors.  After $k$ steps, $P_N^k$ sends every population vector into the
single uniform direction.  A rank-one map carries no two-dimensional
information, so both coherence blocks are then exactly zero.

The two population blocks require more work because they are Markov chains
on order $N^2$ pairs.  Translate both labels of a pair by the same amount.
This changes the common location of the pair but not the distance between its
entries.  Dyadic frequency doubling removes the common-location information
after $k$ steps, leaving only two chains on the $O(N)$ possible separations.
We write those smaller chains explicitly in a cosine basis.  A weighted norm
of the cosine coefficients contracts by a fixed factor at each subsequent
step.  The resulting error has the form $CN^3\rho^s$ with $\rho<1$, where
the factor $N^3$ accounts for reconstructing the full two-copy operator and
passing to diamond norm.  Taking $s=O(\log N+\log\varepsilon^{-1})$ absorbs
that polynomial factor.  Together with the initial $k=\log_2N$ focusing
steps, this gives the stated $O(\log N)$ two-design upper bound.

Section~\ref{sec:classical} develops the finite chain used throughout the
paper.  Sections \ref{sec:full-upper} and \ref{sec:full-lower} prove the
full-law bounds by rather different arguments: coupling for the upper bound
and geometry of the reachable set for the lower bound.  Section
\ref{sec:one-design} settles the first moment and the moment lower bounds.
Sections \ref{sec:two-copy}--\ref{sec:weighted} analyze the second moment in
three stages: phase-sector decomposition, translation focusing, and weighted
Fourier contraction.  Section \ref{sec:design-proof} assembles those estimates.
The final section discusses the scope of the hierarchy and open sharp-time
questions.

\section{The unistochastic Markov chain}\label{sec:classical}

Throughout, $N=2^k$, $M=N/2$, and indices are elements of $\Z_N$.  Our Fourier
convention is
\[
 (F_N)_{xy}=N^{-1/2}e^{-2\pi ixy/N}.
\]

To see where the classical chain comes from, let $\psi$ be a state just
before a fresh diagonal phase and a baker step.  The amplitude at $b$ after
these operations is
\[
 \sum_a(B_N)_{ba}e^{i\xi_a}\psi_a.
\]
Independence and uniformity of the phases remove every cross term with
$a\ne a'$ when its squared modulus is averaged.  Hence
\[
 \E_\xi\left|\sum_a(B_N)_{ba}e^{i\xi_a}\psi_a\right|^2
 =\sum_a |(B_N)_{ba}|^2|\psi_a|^2.
\]
Thus the expected position populations are transported by the Markov kernel
\[
 P_N(a,b)=|(B_N)_{ba}|^2.
\]
We call $P_N$ the unistochastic Markov kernel associated with $B_N$.  It
describes the classical population dynamics obtained by averaging out the
phase information; it does not replace the entire unitary evolution by a
classical chain.  The same kernel will later describe both the motion of
diagonal matrix entries and the motion of off-diagonal tangent energy.

We establish three facts about $P_N$.  It is a noisy doubling map, so its
Fourier evolution is explicit.  It sends every starting point exactly to the
uniform distribution after $k=\log_2N$ steps.  Finally, two independent
copies collide in expected time less than $2N$.  The exact mixing statement
will be used in the moment arguments, while the collision estimate will be
used in the full-law coupling.

\subsection{The unistochastic kernel}

Write an input index as $a=a_0+hM$, where $0\le a_0<M$ and
$h\in\{0,1\}$.  Thus $a_0$ is the position within one half of the basis and
$h$ specifies which half contains $a$.  Direct multiplication of the Fourier
blocks gives
\begin{equation}\label{eq:baker-entry}
 (B_N)_{ba}=\frac{(-1)^{bh}}{\sqrt{NM}}
 \sum_{r=0}^{M-1}e^{2\pi ir(b-2a_0)/N}.
\end{equation}
Taking the absolute square removes the sign $(-1)^{bh}$.  Since
$2a=2a_0$ modulo $N$, the result depends on $a$ and $b$ only through
$b-2a$:
\begin{equation}\label{eq:affine-kernel}
 P_N(a,b)=q_N(b-2a),\qquad
 q_N(z)=\frac1{NM}\left|\sum_{r=0}^{M-1}e^{2\pi irz/N}\right|^2.
\end{equation}
The geometric sum yields the concrete probability mass function
\begin{equation}\label{eq:q-explicit}
 q_N(z)=
 \begin{cases}
 1/2,&z=0,\\
 0,&z\ne0\text{ even},\\
 \displaystyle\frac{2}{N^2\sin^2(\pi z/N)},&z\text{ odd}.
 \end{cases}
\end{equation}
In particular, $q_N$ puts mass $1/2$ at zero, vanishes at every other even
site, and distributes its remaining mass over the odd sites.  A $P_N$-chain
can therefore be generated by first doubling its current position and then
adding independent noise with law $q_N$:
\begin{equation}\label{eq:affine-chain}
 Y_{s+1}=2Y_s+Z_{s+1}\pmod N,
 \qquad Z_s\stackrel{\mathrm{iid}}\sim q_N.
\end{equation}
Here ``iid'' means that the variables $Z_s$ are independent and identically
distributed.

The next lemma gives the Fourier transform of this noise.  Its tent shape is
the reason dyadic doubling eventually destroys every nonconstant Fourier
mode.  For a function $f$ on $\Z_N$, our convention is
\[
 \widehat f(r)=\sum_{x\in\Z_N}f(x)e^{-2\pi irx/N}.
\]

\begin{lemma}[Tent multiplier]\label{lem:tent}
With $|r|_N=\min(r,N-r)$,
\begin{equation}\label{eq:qhat}
 \widehat q_N(r)=\sum_{z\in\Z_N}q_N(z)e^{-2\pi irz/N}
 =1-\frac{|r|_N}{M}.
\end{equation}
\end{lemma}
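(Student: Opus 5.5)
Rather than sum the explicit values \eqref{eq:q-explicit} against characters, I will work from the definition \eqref{eq:affine-kernel}, which writes $q_N$ as a squared modulus. Set $g(z)=\sum_{r=0}^{M-1}e^{2\pi irz/N}$, so that $q_N(z)=\frac{1}{NM}|g(z)|^2=\frac1{NM}g(z)\overline{g(z)}$. Expanding the product gives
\[
 q_N(z)=\frac1{NM}\sum_{r,r'=0}^{M-1}e^{2\pi i(r-r')z/N}.
\]
In this form $q_N$ is already a trigonometric polynomial on $\Z_N$, and its Fourier coefficients can be read off directly.

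The first step is to substitute this expansion into $\widehat q_N(r)=\sum_z q_N(z)e^{-2\pi irz/N}$ and exchange the finite sums. The orthogonality relation $\sum_{z\in\Z_N}e^{2\pi i(u-r)z/N}=N\,\one[u\equiv r \pmod N]$ then yields
\[
 \widehat q_N(r)=\frac1M\,\#\{(r_1,r_2)\in\{0,\dots,M-1\}^2:\ r_1-r_2\equiv r\pmod N\}.
\]

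The second step is to count these pairs. The difference $r_1-r_2$ takes values in $\{-(M-1),\dots,M-1\}$. This interval has length $2M-1<N$, so each residue class mod $N$ meets it at most once. For an integer $d$ with $|d|\le M-1$, the number of pairs with $r_1-r_2=d$ is $M-|d|$.

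The final step is to match residues to the claimed formula. A residue $r$ with $|r|_N\le M-1$ has a unique representative $d$ in the interval, and that representative satisfies $|d|=|r|_N$. The count is therefore $M-|r|_N$, giving $\widehat q_N(r)=1-|r|_N/M$. The only remaining residue is $r=M$, for which $|r|_N=M$; it has no representative, so the count is $0$, which again equals $1-M/M$.

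This is the only place needing care: checking that the wraparound mod $N$ causes no double counting, which holds because $2M-1<N$, and handling the edge case $r=M$. As a sanity check, $r=0$ gives total mass $\widehat q_N(0)=1$.
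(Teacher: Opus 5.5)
Your proof is correct and follows essentially the same route as the paper: expand the squared modulus, apply character orthogonality to reduce $\widehat q_N(r)$ to $M^{-1}$ times the number of pairs with difference $\equiv r \pmod N$, and count those pairs using that all differences lie strictly between $-N$ and $N$. Your explicit checks that the wraparound cannot double count (since $2M-1<N$) and that the edge case $r=M$ gives zero spell out details the paper's one-line count leaves implicit.
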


\begin{proof}
Expanding the square in the definition of $q_N$ and using character
orthogonality gives
\[
 \widehat q_N(r)
 =\frac1M\#\{(u,v)\in\{0,\ldots,M-1\}^2:u-v\equiv r\pmod N\}.
\]
The possible differences $u-v$ lie strictly between $-N$ and $N$.  For the
representative of $r$ having absolute value $|r|_N$, there are therefore
$M-|r|_N$ admissible pairs.  Dividing by $M$ proves the formula.
\end{proof}

\subsection{Exact finite-time mixing}

Let $p_t^{(N)}$ be the law of \eqref{eq:affine-chain} at time $t$ when
$Y_0=0$.  The affine update gives
$\widehat p_{t+1}^{(N)}(r)=\widehat q_N(r)\widehat
p_t^{(N)}(2r)$, and iteration gives
\begin{equation}\label{eq:pt-fourier}
 \widehat p_t^{(N)}(r)=\prod_{j=0}^{t-1}\widehat q_N(2^jr).
\end{equation}

\begin{proposition}[Exact dyadic mixing]\label{prop:exact-classical}
For every starting point $a\in\Z_N$,
\begin{equation}\label{eq:Pk-uniform}
 P_N^k(a,\cdot)=\operatorname{Unif}(\Z_N).
\end{equation}
Moreover, for $0\le t\le k$,
\begin{equation}\label{eq:pt-infty}
 \|p_t^{(N)}\|_\infty=p_t^{(N)}(0)=2^{-t}.
\end{equation}
\end{proposition}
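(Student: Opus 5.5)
We argue entirely in Fourier space, using the product formula \eqref{eq:pt-fourier} together with Lemma~\ref{lem:tent}. First we reduce to the starting point $a=0$. Iterating the affine recursion \eqref{eq:affine-chain} from $Y_0=a$ gives
\[
 Y_t=2^ta+\sum_{j=1}^{t}2^{t-j}Z_j \pmod N .
\]
Hence $P_N^t(a,\cdot)$ is the translate of $p_t^{(N)}$ by $2^ta$. When $t=k$ we have $2^ka\equiv0$, so $P_N^k(a,\cdot)=p_k^{(N)}$, and it suffices to show that $p_k^{(N)}$ is uniform. Equivalently, we must show $\widehat p_k^{(N)}(r)=0$ for every $r\ne0$.

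Fix $r\ne0$ and write $r=2^v u$ with $u$ odd and $0\le v\le k-1$. The index $j=k-1-v$ lies in $\{0,\dots,k-1\}$, and
\[
 2^jr=2^{k-1}u\equiv M \pmod N ,
\]
since $u$ is odd. By Lemma~\ref{lem:tent}, $\widehat q_N(M)=1-M/M=0$. This factor appears in the product \eqref{eq:pt-fourier} for $t=k$, so $\widehat p_k^{(N)}(r)=0$. Fourier inversion then gives uniformity, which proves \eqref{eq:Pk-uniform}.

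For \eqref{eq:pt-infty} we work directly with the law rather than its transform. Set
\[
 S_t=\sum_{j=1}^{t}2^{t-j}Z_j .
\]
By \eqref{eq:q-explicit}, each $Z_j$ is $0$ with probability $1/2$ and otherwise odd. We claim that, for $t\le k$, the event $S_t\equiv0 \pmod N$ forces $Z_1=\cdots=Z_t=0$. To see this, suppose not, and let $j$ be the largest index with $Z_j\ne0$. Then $S_t=2^{t-j}(\text{odd})+(\text{terms divisible by }2^{t-j+1})$, so the $2$-adic valuation of $S_t$ is exactly $t-j\le t-1<k$. That contradicts $S_t\equiv0 \pmod{2^k}$. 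Therefore
\[
 p_t^{(N)}(0)=\Pp(Z_1=\cdots=Z_t=0)=2^{-t}.
\]

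It remains to show that $p_t^{(N)}(x)\le 2^{-t}$ for every $x$. The quickest route again uses the sign of the Fourier transform. By Lemma~\ref{lem:tent}, $\widehat q_N\ge0$, so every $\widehat p_t^{(N)}(r)\ge0$ by \eqref{eq:pt-fourier}. Inversion then gives
\[
 p_t^{(N)}(x)=\frac1N\sum_r\widehat p_t^{(N)}(r)e^{2\pi irx/N}\le\frac1N\sum_r\widehat p_t^{(N)}(r)=p_t^{(N)}(0).
\]
Hence $\|p_t^{(N)}\|_\infty=p_t^{(N)}(0)=2^{-t}$. I expect the only delicate step to be the $2$-adic valuation argument for $p_t^{(N)}(0)$, specifically checking carefully that $t\le k$ is exactly what rules out wraparound modulo $N$.
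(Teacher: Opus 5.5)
Your proof is correct and follows the paper's route: the tent multiplier vanishes at $M$ inside the dyadic product, the shift $2^ka\equiv0$ handles general starting points, and Fourier positivity gives $\|p_t^{(N)}\|_\infty=p_t^{(N)}(0)$. The only difference is minor: you apply the $2$-adic valuation argument directly to identify $\{Y_t=0\}$ with $\{Z_1=\cdots=Z_t=0\}$, whereas the paper uses the same observation to get $p_t^{(N)}(M)=0$ for $t<k$ and then runs the one-step recursion $p_{t+1}^{(N)}(0)=\tfrac12\{p_t^{(N)}(0)+p_t^{(N)}(M)\}$.
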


Here $\|p\|_\infty=\max_x|p(x)|$; for a probability mass function, this is
its largest point mass.

\begin{proof}
We first prove exact mixing.  Fix a nonzero frequency $r$.  Because
$N=2^k$, repeatedly doubling $r$ modulo $N$ reaches $N/2$ before it reaches
zero.  Lemma~\ref{lem:tent} gives $\widehat q_N(N/2)=0$, so one of the first
$k$ factors in \eqref{eq:pt-fourier} vanishes.  Thus
$\widehat p_k^{(N)}(r)=0$ for every $r\ne0$, while the constant Fourier
coefficient remains one.  This characterizes the uniform distribution.
Starting from $a$ instead of zero only adds the deterministic term $2^ka$,
which is zero modulo $N$, and hence \eqref{eq:Pk-uniform} holds for every
starting point.

It remains to identify the largest point mass before exact mixing.  Every
factor in \eqref{eq:pt-fourier} is nonnegative.  Fourier inversion and the
triangle inequality therefore give $p_t^{(N)}(x)\le p_t^{(N)}(0)$ for every
$x$.  To evaluate the mass at zero, unwind the recursion:
\[
 Y_t=\sum_{j=1}^t2^{t-j}Z_j\pmod N.
\]
By \eqref{eq:q-explicit}, each $Z_j$ is either zero or odd.  If at least one
$Z_j$ is nonzero, let $j$ be the last such index.  Its contribution has
exactly $t-j$ factors of two, while every earlier contribution has more.
Consequently a nonzero value of $Y_t$ cannot be a multiple of $2^t$.  For
$t<k$, the point $M=N/2$ is a nonzero multiple of $2^t$, so
$p_t^{(N)}(M)=0$.

There is also a simple one-step recursion for the mass at zero.  Since $2x$
is even, \eqref{eq:q-explicit} shows that the transition from $x$ to zero has
positive probability only for $x=0$ or $x=M$, and in either case that
probability is $1/2$.  Hence, for $t<k$,
\[
 p_{t+1}^{(N)}(0)=\sum_xp_t^{(N)}(x)q_N(-2x)
 =\frac12\{p_t^{(N)}(0)+p_t^{(N)}(M)\}
 =\frac12p_t^{(N)}(0).
\]
Starting from $p_0^{(N)}(0)=1$ gives $p_t^{(N)}(0)=2^{-t}$ up to time
$k$.  At $t=k$ this also agrees with the uniform law already proved.
\end{proof}

\subsection{Two-particle collisions}

We next need to know how long two independent $P_N$-particles can avoid one
another.  Let $Y_s,Y_s'$ be independent copies of
\eqref{eq:affine-chain} and set $\Delta_s=Y_s-Y_s'$.  Then
\[
 \Delta_{s+1}=2\Delta_s+(Z_{s+1}-Z_{s+1}')\pmod N.
\]
The increment $Z_{s+1}-Z_{s+1}'$ has a symmetric distribution, and the
transition kernel $P_N^\Delta$ of the difference chain is doubly stochastic.
Moreover, after $k$ steps each particle is uniform independently of its
starting point.  Their difference is therefore uniform as well, so
$(P_N^\Delta)^k=\Pi$.  A collision of the two particles is exactly a visit
of the difference chain to zero.  Accordingly, define
\[
 \tau_0=\inf\{s\ge0:\Delta_s=0\}.
\]

\begin{proposition}[Collision bound]\label{prop:collision}
For every nonzero initial difference,
\begin{equation}\label{eq:collision-mean}
 \E_d\tau_0<2N,
 \qquad
 \Pp_d(\tau_0>4N)<\frac12.
\end{equation}
\end{proposition}

The subscript $d$ on $\E_d$ and $\Pp_d$ means that the difference chain
starts from $\Delta_0=d$.

\begin{proof}
Because the chain is exactly mixed after $k$ steps, its centered potential
kernel is the finite sum
\[
 Z=\sum_{s=0}^{k-1}\{(P_N^\Delta)^s-\Pi\}
\]
and satisfies $(I-P_N^\Delta)Z=I-\Pi$.  We briefly derive the hitting-time
identity that we need.  Define
\[
 h(d)=N\{Z(0,0)-Z(d,0)\}.
\]
Clearly $h(0)=0$.  For $d\ne0$, the $(d,0)$ entry of
$(I-P_N^\Delta)Z=I-\Pi$ is $-1/N$, and therefore
\[
 h(d)-\sum_eP_N^\Delta(d,e)h(e)=1.
\]
These are exactly the first-step equations for the expected time to hit
zero: the value is zero at the target, and from every other state one unit
of time elapses before the chain moves.  The unique solution of these
finite-state Dirichlet equations is $h(d)=\E_d\tau_0$.  Hence
\[
 \E_d\tau_0=N\{Z(0,0)-Z(d,0)\}.
\]
Thus the problem reduces to bounding the total return probability at zero.
Indeed, since $(P_N^\Delta)^s(d,0)\ge0$,
\[
 Z(0,0)-Z(d,0)
 \le\sum_{s=0}^{k-1}(P_N^\Delta)^s(0,0).
\]
When both particles start at zero, they agree at time $s$ with probability
\[
 (P_N^\Delta)^s(0,0)
 =\sum_xp_s^{(N)}(x)^2
 \le \|p_s^{(N)}\|_\infty.
\]
Proposition~\ref{prop:exact-classical} bounds the last quantity by $2^{-s}$.
The resulting geometric sum is smaller than $2$, proving
$\E_d\tau_0<2N$.  Finally,
$\Pp_d(\tau_0>4N)\le \E_d\tau_0/(4N)<1/2$ by Markov's inequality.
\end{proof}

Let $\cM_N$ be the substochastic transition matrix of the pair $(Y,Y')$ on
the off-diagonal state space
\[
 \{(a,b)\in\Z_N^2:a\ne b\},
\]
obtained by killing the pair when its two coordinates meet.  Substochastic
means that its entries are nonnegative and its row sums are at most one; the
missing mass is precisely the probability of having been killed.  Its powers have
the direct probabilistic interpretation
\[
 \cM_N^s\one(a,b)=\Pp_{a,b}(\tau_0>s).
\]
Here $\one$ without a subscript is the constant-one function on the
off-diagonal pair space.
Write $r(\cM_N)$ for the spectral radius of $\cM_N$, the largest absolute
value of one of its eigenvalues.  Bounding this number
away from one will later give exponential decay of the off-diagonal tangent
energy in the Wasserstein argument.

\begin{corollary}\label{cor:killed-gap}
The killed-pair kernel satisfies
\[
 r(\cM_N)\le1-\frac1{2N}.
\]
\end{corollary}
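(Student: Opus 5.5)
We bound the spectral radius through the uniform tail bound of Proposition~\ref{prop:collision}, using the probabilistic meaning of the powers of $\cM_N$. Since $\cM_N$ has nonnegative entries, its spectral radius is dominated by any induced operator norm of its powers. The natural choice is the $\ell^\infty$ operator norm, which for a nonnegative matrix equals the maximal row sum:
\[
 r(\cM_N)^s=r(\cM_N^s)\le\|\cM_N^s\|_{\infty\to\infty}
 =\max_{a\ne b}\cM_N^s\one(a,b)=\max_{a\ne b}\Pp_{a,b}(\tau_0>s).
\]

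Next we translate the pair probability into the difference chain. For the pair $(Y,Y')$ started at $(a,b)$, the killing time is exactly the first time $\Delta_s=Y_s-Y_s'$ hits zero, with $\Delta_0=a-b=:d\ne0$. By the difference recursion, $\Delta$ is a Markov chain with kernel $P_N^\Delta$, so $\Pp_{a,b}(\tau_0>s)=\Pp_d(\tau_0>s)$. Taking $s=4N$, Proposition~\ref{prop:collision} gives $\max_{d\ne0}\Pp_d(\tau_0>4N)<1/2$, hence
\[
 r(\cM_N)<2^{-1/(4N)}.
\]

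It remains to compare $2^{-1/(4N)}$ with $1-\frac1{2N}$, and this is the step I expect to be the obstacle. We have $2^{-1/(4N)}=e^{-\ln 2/(4N)}\approx1-\frac{0.173}{N}$, which is weaker than $1-\frac1{2N}$. So the Markov-inequality tail alone loses a constant, and the plan is to use the mean bound directly.

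To do so, let $m=\max_{d\ne0}\E_d\tau_0<2N$, where the strict inequality holds over a finite state space. The vector $h(a,b)=\E_{a-b}\tau_0$ is positive on the off-diagonal space. The first-step equations from the proof of Proposition~\ref{prop:collision} give $\cM_N h=h-\one$. Since $\one\ge h/m$ entrywise, this yields
\[
 \cM_N h\le\Bigl(1-\frac1m\Bigr)h.
\]
For a nonnegative matrix with a strictly positive vector $h$ satisfying $\cM_N h\le\lambda h$, the weighted row-sum norm gives $r(\cM_N)\le\lambda$; this is the Collatz--Wielandt bound, equivalently the $\ell^\infty$ norm after conjugating by $\diag(h)$. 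Hence $r(\cM_N)\le1-\frac1m<1-\frac1{2N}$.

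This Lyapunov-function argument is cleaner than the tail argument and gives the stated constant. I would present it as the main proof and omit the tail-based estimate.
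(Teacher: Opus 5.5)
Your argument is correct, and it reaches the paper's bound by a route dual to the paper's.

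\emph{The paper's route.} It takes a nonnegative Perron eigenvector $f$ of $\cM_N$ with $\|f\|_\infty=1$. At a state $x$ with $f(x)=1$, the inequality $f\le\one$ gives $\Pp_x(\tau_0>s)=\cM_N^s\one(x)\ge r(\cM_N)^s$. Summing over $s$ yields $1/(1-r(\cM_N))\le\E_x\tau_0<2N$. In other words, it bounds survival probabilities from below along the top eigenvector.

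\emph{Your route.} You use the expected meeting time $h$ as a strictly positive supersolution, $\cM_Nh=h-\one\le(1-1/m)h$, and conclude with the Collatz--Wielandt (weighted row-sum) bound. You correctly lift the first-step identity from the difference chain to pairs, using $h=0$ on the diagonal and the fact that $\Delta$ is itself Markov.

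\emph{Comparison.} Your version needs no eigenvector existence theorem, and it gives slightly more. Since $\one\le h$, iterating gives $\Pp_{a,b}(\tau_0>s)\le h(a,b)(1-1/m)^s<2N(1-\tfrac1{2N})^s$. This is contraction of $\cM_N$ in an $h$-weighted sup norm, not only a spectral-radius statement. The paper's version is marginally shorter and uses the mean bound at a single state. Both give $r(\cM_N)\le1-1/\max_{d\ne0}\E_d\tau_0$.

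You are also right that the Markov-tail estimate alone yields only $r(\cM_N)<2^{-1/(4N)}$, which is too weak for the stated constant. Dropping it in favour of the Lyapunov argument is the right call.
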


\begin{proof}
By the Perron--Frobenius theorem for nonnegative matrices, there is a
nonnegative eigenvector $f$ with eigenvalue $r(\cM_N)$.  Normalize it by
$\|f\|_\infty=1$.  Choose a distinct pair $x$ at which $f(x)=1$.  Since
$f\le\one$ and $\cM_N$ has nonnegative entries,
\[
 \cM_N^s\one(x)\ge \cM_N^sf(x)=r(\cM_N)^s.
\]
The left-hand side is the probability that the pair has not yet collided
after $s$ steps.  Summing over $s\ge0$ therefore gives
\[
 \frac1{1-r(\cM_N)}
 \le \E_x\tau_0<2N,
\]
where the last inequality is Proposition~\ref{prop:collision}.  Rearranging
proves the claim.
\end{proof}

\section{Full-law Wasserstein upper bound}\label{sec:full-upper}

Our goal is to couple two copies of the walk so that their distance contracts
by a fixed factor every $O(N)$ steps.  We first do this when the two starting
matrices are very close.  Oliveira's local-to-global path-coupling theorem
will then extend the estimate to arbitrary starting points.

We begin with the small amount of geometry needed to describe a nearby pair.
Write
\[
 \mathfrak u(N)=\{A\in M_N(\mathbb C):A^*=-A\}
\]
for the skew-Hermitian matrices.  If $A\in\mathfrak u(N)$, then $e^{hA}$ is
unitary, and $h\mapsto e^{hA}X$ is a path in $\U(N)$ starting at $X$.  Thus,
for small $h$, a pair of the form
\[
 Y=e^{hA}X
\]
may be read as follows: $A$ gives the direction from $X$ to $Y$, and $h$
gives its size when $\|A\|_{\HS}=1$.  This is the only role played below by
the Lie algebra $\mathfrak u(N)$.

We measure the length of such directions with the Hilbert--Schmidt inner
product
\[
 \langle A,C\rangle_{\HS}=\operatorname{Re}\Tr(A^*C),
 \qquad A^*=-A, C^*=-C.
\]
The distance $D(U,V)$ is the length of the shortest path in $\U(N)$ under
this norm.  Two elementary comparisons will be used:
\begin{equation}\label{eq:metric-compare}
 \|U-V\|_{\HS}\le D(U,V),\qquad \diam(\U(N),D)\le\pi\sqrt N.
\end{equation}
The first says that a path is at least as long as the straight chord joining
its endpoints.  For the second, diagonalize $UV^*$ and rotate each of its
$N$ eigenvalues to $1$ through an angle in $[-\pi,\pi]$; the resulting path
has length at most $\pi\sqrt N$.

As the coupled walks evolve, we describe their relative displacement by a
skew-Hermitian matrix $R$: the notation
$Y=e^{hR+O(h^2)}X$ means that, to first order in $h$, the second point is
obtained from the first by left multiplication by $e^{hR}$.  In differential
geometry this identification is called left trivialization of the tangent
bundle.  No coordinates on $\U(N)$ will be needed.  Conjugation preserves the
Hilbert--Schmidt norm, so transporting $R$ through a unitary operation does
not by itself change its length.

Here is the coupling argument in outline.  Start two copies of the walk a
small distance apart and follow the matrix $R$ describing their first-order
displacement.
At each step we shift the phases of the second copy so as to cancel the
diagonal part of that displacement.  The remaining off-diagonal squared
coefficients evolve like two $P_N$-particles until they collide.  The
collision estimate from Proposition~\ref{prop:collision} therefore contracts
the expected squared size of $R$ by a fixed factor in $O(N)$ steps.  The
local-to-global theorem then turns this first-order construction into a
Wasserstein contraction for arbitrary starting laws.

We next examine one layer of the coupling.  For any matrix $C$, define
\[
 \Pi_{\mathfrak d}C=\diag(C_{00},\ldots,C_{N-1,N-1}),
 \qquad QC=C-\Pi_{\mathfrak d}C.
\]
Thus $\Pi_{\mathfrak d}C$ is the diagonal part of $C$ and $QC$ is its
off-diagonal part.  We also use the abbreviation
$\Ad_U(C)=UCU^*$ for conjugation by a unitary $U$.

Suppose that, just before layer $s$, the two nearby trajectories satisfy
\[
 Y_{s-1}=e^{hR_{s-1}+O(h^2)}X_{s-1}.
\]
First apply $B_N$ to both trajectories.  The identity
$B_Ne^{hR}=e^{hB_NRB_N^*}B_N$, which holds because conjugation commutes with
the matrix exponential, shows that their new first-order displacement is
\[
 H_s:=B_NR_{s-1}B_N^*.
\]
Decompose it as
\[
 H_s=\Pi_{\mathfrak d}H_s+QH_s.
\]

Why can the first term be removed by the coupling?  Because a small change
of the next diagonal phase by $-hb_s$ multiplies the second trajectory by
$e^{-ih\diag(b_s)}$ and hence contributes
$-i\diag(b_s)$ to its first-order displacement.  Since $H_s$ is
skew-Hermitian, its diagonal entries are purely imaginary, so there is a
real vector $b_s$ satisfying
\[
 i\diag(b_s)=\Pi_{\mathfrak d}H_s.
\]
Using the phase correction $-hb_s$ therefore cancels the diagonal part of
$H_s$ and leaves $QH_s$.
Subsection~\ref{subsec:phase-coupling} will show that these corrections can
be made without changing the marginal distribution of either walk.

Finally, the common fresh phase $D_{\xi_s}$ changes the coordinates in which
the remaining displacement is expressed.  The residual displacement after
the layer is
\begin{equation}\label{eq:one-step-residual}
 R_s=D_{\xi_s}(QH_s)D_{\xi_s}^*
 =\Ad_{D_{\xi_s}}Q(B_NR_{s-1}B_N^*).
\end{equation}
Entry by entry,
\[
 (R_s)_{ij}=e^{i(\xi_{s,i}-\xi_{s,j})}(QH_s)_{ij}.
\]
The fresh phase therefore leaves every magnitude $|(QH_s)_{ij}|$ unchanged
but randomizes the relative phases of the off-diagonal entries.  When the
next baker matrix forms linear combinations of those entries, phase
averaging eliminates the cross terms.  The next subsection shows that the
resulting squared magnitudes evolve exactly as a classical two-particle
process.

\subsection{Phase averaging of the residual displacement}

The following identity is the bridge from contraction of the matrix $R$ to
the collision problem studied in Section~\ref{sec:classical}.

\begin{lemma}[Killed-pair evolution]\label{lem:phase-average}
If $C\in\mathfrak u(N)$ has zero diagonal and
$C^+=B_ND_\xi C D_\xi^*B_N^*$, then, for $i\ne j$,
\begin{equation}\label{eq:phase-average}
 \E_\xi|C^+_{ij}|^2
 =\sum_{a\ne b}P_N(a,i)P_N(b,j)|C_{ab}|^2.
\end{equation}
\end{lemma}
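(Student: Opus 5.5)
Expanding the entry directly gives the identity. By definition,
\[
 C^+_{ij}=\sum_{a,b}(B_N)_{ia}\,e^{i(\xi_a-\xi_b)}C_{ab}\,\overline{(B_N)_{jb}},
\]
and since $C$ has zero diagonal, only terms with $a\ne b$ appear. We will square the modulus and write $|C^+_{ij}|^2$ as a double sum over pairs $(a,b)$ and $(a',b')$, both with $a\ne b$ and $a'\ne b'$. The summand is
\[
 (B_N)_{ia}\overline{(B_N)_{jb}}\,\overline{(B_N)_{ia'}}(B_N)_{jb'}\,C_{ab}\overline{C_{a'b'}}\,e^{i(\xi_a-\xi_b-\xi_{a'}+\xi_{b'})}.
\]

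The key step is to compute the phase average of each term. The phases are independent and uniform, so $\E_\xi e^{i(\xi_a-\xi_b-\xi_{a'}+\xi_{b'})}$ equals $1$ exactly when the formal combination $e_a-e_b-e_{a'}+e_{b'}$ of basis vectors in $\Z^N$ vanishes, and equals $0$ otherwise. We then solve this cancellation condition using $a\ne b$. The positive coefficient at $a$ must be cancelled by the term $-e_{a'}$, because $a\ne b$ rules out cancellation against $-e_b$. Hence $a=a'$. With that, $e_{b'}-e_b=0$ forces $b'=b$. The other pairing, $a=b$ with $a'=b'$, is excluded by the zero diagonal. This is the only point where the hypothesis on $C$ is used. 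Note that the index pair $(i,j)$ plays no role in the cancellation, since the phases are attached to $a,b,a',b'$.

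Only the diagonal pairs $(a',b')=(a,b)$ survive, so
\[
 \E_\xi|C^+_{ij}|^2=\sum_{a\ne b}|(B_N)_{ia}|^2|(B_N)_{jb}|^2|C_{ab}|^2.
\]
Finally, we identify $|(B_N)_{ia}|^2=P_N(a,i)$ and $|(B_N)_{jb}|^2=P_N(b,j)$, which is the definition $P_N(a,b)=|(B_N)_{ba}|^2$. This gives \eqref{eq:phase-average}.

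No step is a real obstacle. The one point needing care is the case analysis of the cancellation condition, in particular checking that the alternative pairing is absent; that is exactly where zero diagonal enters. Neither skew-Hermitian symmetry nor the restriction $i\ne j$ is needed for the identity itself. The restriction $i\ne j$ only matters later, when $Q$ removes the diagonal entries and produces the killing in $\cM_N$.
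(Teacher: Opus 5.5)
Your proposal is correct and follows the paper's proof: you expand $C^+_{ij}$ as a sum over $a\ne b$, square it, and let the phase average kill every cross term unless $(a',b')=(a,b)$. The only difference is that you spell out the character-cancellation case analysis, including why the pairing $a=b$, $a'=b'$ is excluded by the zero diagonal, where the paper simply states that only identical ordered pairs survive.
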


\begin{proof}
Expanding the $(i,j)$ entry gives
\[
 C^+_{ij}=\sum_{a\ne b}(B_N)_{ia}\overline{(B_N)_{jb}}
 e^{i(\xi_a-\xi_b)}C_{ab},
\]
where the restriction $a\ne b$ uses the zero diagonal of $C$.  When the
squared modulus is expanded, independence and uniformity of the phases make
the expectation of every cross term zero unless the two ordered pairs are
identical.  The surviving coefficient is
\[
 |(B_N)_{ia}|^2|(B_N)_{jb}|^2
 =P_N(a,i)P_N(b,j),
\]
which proves \eqref{eq:phase-average}.
\end{proof}

It is useful to spell out the probabilistic content of the identity.  Regard
$|C_{ab}|^2$ as energy carried by the ordered off-diagonal pair $(a,b)$.
After phase averaging, that energy moves to $(i,j)$ with weight
$P_N(a,i)P_N(b,j)$, exactly as if the two coordinates made independent
$P_N$-transitions.  Projection by $Q$ deletes the terms with $i=j$.  Thus
energy is removed precisely when the two particles meet, which explains the
killed chain introduced after Proposition~\ref{prop:collision}.

We now iterate this identity.  Start with a unit displacement
$R_0=A\in\mathfrak u(N)$ and define each subsequent $R_s$ by
\eqref{eq:one-step-residual}.
The order in Lemma~\ref{lem:phase-average} matches the transition from one
residual to the next: $R_s$ ends with conjugation by $D_{\xi_s}$, and the
next step begins with conjugation by $B_N$.

Here is the iteration explicitly.  Put
\[
 w_1(a,b)=|(Z_1)_{ab}|^2,
 \qquad Z_1=Q(B_NAB_N^*),
 \qquad a\ne b.
\]
If $w_j(a,b)$ denotes the expected off-diagonal energy at $(a,b)$ after
$j-1$ killed-pair transitions, Lemma~\ref{lem:phase-average} says
\[
 w_{j+1}(i,j')
 =\sum_{a\ne b}w_j(a,b)P_N(a,i)P_N(b,j'),
 \qquad i\ne j'.
\]
In other words, $w_{j+1}=w_j\cM_N$ in row-vector notation.  Induction gives
$w_{s+1}=w_1\cM_N^s$, and summing over all off-diagonal endpoints yields
\begin{equation}\label{eq:residual-survival}
 \E\|R_{s+1}\|_{\HS}^2
 =\sum_{a\ne b}|(Z_1)_{ab}|^2\Pp_{a,b}(\tau_0>s),
\end{equation}
For completeness, the indexing in \eqref{eq:residual-survival} can be read as
follows.  The first baker step produces the initial off-diagonal energy
$|(Z_1)_{ab}|^2$.  Each subsequent baker step is one transition of the
ordered pair $(a,b)$.  A contribution remains in $R_{s+1}$ exactly when the
pair has avoided the diagonal during those $s$ transitions.  Summing over
the surviving endpoint gives \eqref{eq:residual-survival}.  Notice also that
$\|Z_1\|_{\HS}\le\|A\|_{\HS}$ because $Q$ is an orthogonal projection.
Together with Proposition~\ref{prop:collision},
\begin{equation}\label{eq:residual-half}
 \E\|R_{4N+1}\|_{\HS}^2\le\frac12\|A\|_{\HS}^2.
\end{equation}

\subsection{A measure-preserving phase coupling}\label{subsec:phase-coupling}

We first isolate the elementary fact that keeps the modified phases random.
It is the probabilistic reason for making the correction at step $s$ depend
only on phases observed before step $s$.

\begin{lemma}[Triangular phase shifts]\label{lem:triangular-phase-shifts}
For $1\le s\le L$, let
$a_s:(\T^N)^{s-1}\to\mathbb R^N$ be smooth and periodic, with $a_1$
constant.  For every real $h$, the transformation
\begin{equation}\label{eq:abstract-triangular-shift}
 \xi_s'=\xi_s-ha_s(\xi_1,\ldots,\xi_{s-1})\pmod{2\pi},
 \qquad 1\le s\le L,
\end{equation}
is a measure-preserving bijection of $(\T^N)^L$.  In particular, if the
unprimed phases are jointly product-uniform, then so are the primed phases.
\end{lemma}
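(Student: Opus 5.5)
We prove Lemma~\ref{lem:triangular-phase-shifts} by writing down the inverse explicitly and then checking measure preservation one coordinate at a time, integrating from the last coordinate backward. Call the map $T_h:(\xi_1,\ldots,\xi_L)\mapsto(\xi_1',\ldots,\xi_L')$. We first check that it is well defined on the torus. Each $a_s$ is periodic in its arguments, so changing any earlier $\xi_j$ by a multiple of $2\pi$ does not change the shift. Reducing the output modulo $2\pi$ then gives a map $(\T^N)^L\to(\T^N)^L$.

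\emph{Bijectivity.} The map is triangular, so we invert it recursively:
\[
 \xi_s=\xi_s'+ha_s(\xi_1,\ldots,\xi_{s-1})\pmod{2\pi},
 \qquad s=1,2,\ldots,L.
\]
Here $\xi_1=\xi_1'+ha_1$ because $a_1$ is constant. Each subsequent $\xi_s$ is determined by $\xi_s'$ together with the already reconstructed $\xi_1,\ldots,\xi_{s-1}$. This recursion defines a map $S_h$, which is again smooth and well defined on the torus. Composing $S_h$ with $T_h$ in either order gives the identity, coordinate by coordinate, by induction on $s$. Hence $T_h$ is a bijection, and in fact a diffeomorphism.

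\emph{Measure preservation.} Let $\lambda$ be the normalized product Haar measure on $(\T^N)^L$, and fix a bounded measurable test function $g$. We compute $\int g\circ T_h\,d\lambda$ by Fubini, integrating first over $\xi_L$ with $\xi_1,\ldots,\xi_{L-1}$ held fixed. With those coordinates frozen, $\xi_L'=\xi_L-ha_L(\xi_1,\ldots,\xi_{L-1})$ is a fixed translation on $\T^N$. Haar measure on $\T^N$ is translation invariant, so $\xi_L$ can be replaced by a uniform variable $\eta_L$ independent of everything else. Next we integrate over $\xi_{L-1}$. The earlier coordinates are now fixed, and the last slot is $\eta_L$, which no longer depends on anything. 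The map in slot $L-1$ is again a translation, so it too is absorbed. Continuing down to $s=1$ gives
\[
 \int g\circ T_h\,d\lambda=\int g\,d\lambda,
\]
so $T_h$ pushes $\lambda$ forward to $\lambda$. An alternative route is to note that the Jacobian of $T_h$ in lifted coordinates is block lower triangular with identity blocks on the diagonal, so its determinant is $1$. Together with bijectivity, the change-of-variables formula then gives the same conclusion. The final statement is just a restatement: if the unprimed phases have law $\lambda$, then the primed phases, being $T_h$ of them, also have law $\lambda$.

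The only point that needs care is bookkeeping in the Fubini step. The shift in slot $s$ depends on the unprimed coordinates $\xi_1,\ldots,\xi_{s-1}$. This is why we integrate from the last coordinate backward: each innermost integration sees a pure translation. No smoothness beyond measurability is actually needed; smoothness is retained only for the later first-order coupling expansion.
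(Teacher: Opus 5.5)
Your proposal is correct and follows essentially the paper's argument. The paper also inverts the map recursively and proves measure preservation by noting that, once $\xi_1,\ldots,\xi_{s-1}$ are fixed, the shift in slot $s$ is a fixed translation of $\T^N$. Your backward Fubini computation is the integral form of the paper's successive conditioning, and the Jacobian-determinant-one remark is an optional alternative that the paper does not need.
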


\begin{proof}
The inverse is recovered successively.  First recover
$\xi_1=\xi_1'+ha_1$; once $\xi_1,\ldots,\xi_{s-1}$ are known, recover
\[
 \xi_s=\xi_s'+ha_s(\xi_1,\ldots,\xi_{s-1})\pmod{2\pi}.
\]
Thus the map is a bijection.  To see directly that it preserves the product
measure, condition on $\xi_1,\ldots,\xi_{s-1}$.  The correction applied to
$\xi_s$ is then fixed, and translation by a fixed vector preserves the
uniform distribution on $\T^N$.  Applying this observation successively for
$s=1,\ldots,L$ shows that the transformed phase string is again
product-uniform.  Smoothness follows from the assumptions on the functions
$a_s$.
\end{proof}

We now construct the coupling.  Take $X$ and $Y$ sufficiently close that
their shortest relative logarithm is unambiguous.  We may then write
\[
 Y=e^{hA}X,\qquad \|A\|_{\HS}=1,\qquad h=D(X,Y).
\]
Let $X_s$ and $Y_s$ be the two trajectories.  The first uses the original
phases $\xi_s$; the second will use predictably shifted phases $\xi_s'$.
At step $s$, the transported displacement
$B_NR_{s-1}B_N^*$ is skew-Hermitian, so its diagonal entries are purely
imaginary.  There is therefore a real vector $b_s\in\mathbb R^N$ such that
\begin{equation}\label{eq:diagonal-correction}
 \Pi_{\mathfrak d}(B_NR_{s-1}B_N^*)=i\diag(b_s),
\end{equation}
and we give the second trajectory the corrected phase
\begin{equation}\label{eq:triangular-coupling}
 \xi_s'=\xi_s-hb_s(\xi_1,\ldots,\xi_{s-1})\pmod{2\pi}.
\end{equation}
The important point is that $b_s$ is determined by $R_{s-1}$ and hence by
the earlier phases; it is chosen before the new phase $\xi_s$ is revealed.
Lemma~\ref{lem:triangular-phase-shifts} therefore guarantees that the
shifted phases still have exactly the required product-uniform law.

Substituting the corrected phase into the one-step update verifies the
calculation given above.  If
$Y_{s-1}=e^{hR_{s-1}+O(h^2)}X_{s-1}$, then
\begin{align*}
Y_s
 &=D_{\xi_s-hb_s}B_NY_{s-1}\\
 &=e^{-ih\diag(b_s)}D_{\xi_s}
   e^{hB_NR_{s-1}B_N^*+O(h^2)}B_NX_{s-1}\\
 &=e^{h\Ad_{D_{\xi_s}}Q(B_NR_{s-1}B_N^*)+O(h^2)}X_s.
\end{align*}
This is exactly the recursion \eqref{eq:one-step-residual}.  For fixed $N$
and a fixed number of steps, the $O(h^2)$ error is uniform over all choices
of the phases.

For $L=4N+1$, each $b_s$ is a smooth periodic function of the preceding
phases.  Lemma~\ref{lem:triangular-phase-shifts} therefore shows that the
resulting triangular map, which we denote by
\[
 \Psi_h:(\T^N)^L\longrightarrow(\T^N)^L,
 \qquad (\xi_1,\ldots,\xi_L)\longmapsto
 (\xi_1',\ldots,\xi_L'),
\]
preserves product-uniform measure.

Let $K_N$ denote the one-step transition kernel of the walk.  We first verify
the two marginals of the coupling explicitly.  For an initial
unitary $Z$ and a phase string $\eta=(\eta_1,\ldots,\eta_L)$, write
\[
 \Phi_L(Z;\eta)
 =D_{\eta_L}B_N\cdots D_{\eta_1}B_NZ
\]
for the endpoint after $L$ steps.  Draw a product-uniform phase string $\xi$
and set
\[
 X_L=\Phi_L(X;\xi),
 \qquad
 Y_L=\Phi_L(Y;\Psi_h(\xi)).
\]
The first endpoint has law $K_N^L(X,\cdot)$.  Since $\Psi_h$ preserves
product-uniform measure, $\Psi_h(\xi)$ is itself product-uniform, and the
second endpoint has law $K_N^L(Y,\cdot)$.  Thus $(X_L,Y_L)$ is an exact
coupling of the required transition laws for every sufficiently small $h$.
Only the estimate of its distance will use a first-order expansion.

We now make that expansion precise.  For small $h$, the relative unitary
$Y_LX_L^*$ is close to the identity and has a unique small skew-Hermitian
logarithm: the matrix $S$ such that $Y_LX_L^*=e^S$.  Iterating the one-step
calculation above gives
\begin{equation}\label{eq:relative-log-expansion}
 \log(Y_LX_L^*)=hR_L(\xi)+E_L(h,\xi),
 \qquad
 \sup_\xi\|E_L(h,\xi)\|_{\HS}\le C_Nh^2
\end{equation}
when $h$ is sufficiently small.  The bound is uniform in $\xi$ because the
endpoint map, the phase corrections, and their derivatives are continuous
on the compact phase torus, and $L=4N+1$ is fixed once $N$ is fixed.  The
constant may also be chosen uniformly over $X\in\U(N)$ and
$\|A\|_{\HS}=1$, since these parameter spaces are compact.

To see why the error remains quadratic, suppose after step $s-1$ that the
relative logarithm has the form $hR_{s-1}+E_{s-1}$ with
$\|E_{s-1}\|_{\HS}=O_N(h^2)$.  Conjugation by $B_N$ and by a diagonal
unitary preserves the size of this error.  The Baker--Campbell--Hausdorff
expansion used to combine the transported displacement with the phase
correction adds only terms containing at least two factors of size $O(h)$.
Thus the new error is bounded by a constant times
$\|E_{s-1}\|_{\HS}+h^2$.  Starting from zero error and repeating this for
the fixed number $L$ of steps proves \eqref{eq:relative-log-expansion};
compactness makes all of the constants uniform in the phase string.

For nearby unitaries, intrinsic distance is the Hilbert--Schmidt norm of the
principal relative logarithm.  Moreover,
$\|R_L\|_{\HS}\le\|A\|_{\HS}=1$, because each step defining $R_L$ consists
of unitary conjugations and the orthogonal projection $Q$.  It follows from
\eqref{eq:relative-log-expansion} that, uniformly in the phase string,
\[
 \left|D(X_L,Y_L)^2-h^2\|R_L\|_{\HS}^2\right|
 \le C_N'h^3.
\]
Taking expectations and then using \eqref{eq:residual-half} gives
\begin{equation}\label{eq:local-expansion}
 \E D(X_L,Y_L)^2
 =h^2\E\|R_L\|_{\HS}^2+O_N(h^3)
 \le\frac12h^2+O_N(h^3).
\end{equation}

Since Wasserstein distance is the infimum over all couplings, our particular
coupling gives
\[
 W_{2,D}(K_N^L(X,\cdot),K_N^L(Y,\cdot))^2
 \le \frac12h^2+O_N(h^3).
\]
Recall that $h=D(X,Y)$.  Dividing by $h^2$, taking square roots, and letting
$Y\to X$ now gives
\[
 \limsup_{Y\to X}
 \frac{W_{2,D}(K_N^L(X,\cdot),K_N^L(Y,\cdot))}{D(X,Y)}
 \le 2^{-1/2}
\]
uniformly in $X$.

We now use Oliveira's local-to-global path-coupling theorem
\cite[Theorem 3]{Oliveira2009}.  Its content here is simple.  If a Markov
kernel contracts every sufficiently short displacement by the same factor,
and distance is defined by lengths of paths, then subdividing a path into
short pieces extends the same contraction to its two endpoints.  The
technical assumptions are automatic: $(\U(N),D)$ is compact, hence complete
and separable, and $D$ is a path-length metric.  All transition laws also
have finite second moments because the space has finite diameter.

Every sufficiently close pair has the representation used above with
$h=D(X,Y)$.  Equation \eqref{eq:local-expansion} therefore says that
$X\mapsto K_N^L(X,\cdot)$ is locally $2^{-1/2}$-Lipschitz when the source is
measured by $D$ and the target by $W_{2,D}$.  Oliveira's theorem promotes
this local estimate to the global contraction below.  This is the only
external coupling theorem used in the proof.

\begin{proposition}[Block contraction]\label{prop:block-contraction}
For probability measures $\mu,\nu$ on $\U(N)$,
\begin{equation}\label{eq:block-contraction}
 W_{2,D}(\mu K_N^{4N+1},\nu K_N^{4N+1})
 \le2^{-1/2}W_{2,D}(\mu,\nu),
\end{equation}
\end{proposition}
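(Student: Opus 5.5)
The plan is to apply Oliveira's local-to-global theorem \cite[Theorem 3]{Oliveira2009} to the kernel $K:=K_N^L$, $L=4N+1$, on the metric space $(\U(N),D)$. The local input has already been established: for every $X$,
\[
 \limsup_{Y\to X}\frac{W_{2,D}(K(X,\cdot),K(Y,\cdot))}{D(X,Y)}\le2^{-1/2},
\]
and this holds uniformly in $X$. The steps are: first verify the hypotheses of the theorem; then obtain the pointwise global estimate
\[
 W_{2,D}(K(X,\cdot),K(Y,\cdot))\le2^{-1/2}D(X,Y)
 \qquad\text{for all }X,Y;
\]
and finally lift this to arbitrary initial laws $\mu,\nu$.

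\textbf{Hypotheses.} The space $(\U(N),D)$ is compact, hence Polish. The metric $D$ is a length metric by definition, since it is the infimum of Hilbert--Schmidt lengths of paths, and minimizing geodesics exist because $\U(N)$ is a compact Riemannian manifold. All measures involved have finite second moments because $\diam(\U(N),D)\le\pi\sqrt N$ by \eqref{eq:metric-compare}. The kernel $X\mapsto K(X,\cdot)$ is measurable, and in fact weakly continuous, since it is the pushforward of product-uniform phase measure under the continuous map $\Phi_L(X;\cdot)$.

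\textbf{Pointwise global estimate.} If one wants to avoid citing the full theorem, I would argue directly. Take a minimizing geodesic $\gamma$ from $X$ to $Y$ of length $D(X,Y)$. Choose $\delta>0$ such that
\[
 W_{2,D}(K(Z,\cdot),K(Z',\cdot))\le(2^{-1/2}+\eta)\,D(Z,Z')
 \qquad\text{whenever }D(Z,Z')<\delta .
\]
Such a uniform $\delta$ exists because the constant $C_N$ in \eqref{eq:relative-log-expansion} is uniform in $X$ and in $A$. Subdivide $\gamma$ into pieces of length less than $\delta$. Since the points lie on a geodesic, the piece lengths sum exactly to $D(X,Y)$. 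The triangle inequality for $W_{2,D}$ then gives the bound $(2^{-1/2}+\eta)D(X,Y)$, and letting $\eta\to0$ yields the pointwise estimate.

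\textbf{Lifting to initial laws.} Take an optimal $W_{2,D}$-coupling $\pi$ of $\mu$ and $\nu$. Using a measurable selection of optimal couplings of $K(X,\cdot)$ and $K(Y,\cdot)$, glue these couplings over $\pi$. This produces a coupling of $\mu K$ and $\nu K$ whose expected squared cost is
\[
 \int W_{2,D}(K(X,\cdot),K(Y,\cdot))^2\,d\pi\le\tfrac12\int D^2\,d\pi=\tfrac12W_{2,D}(\mu,\nu)^2,
\]
which is \eqref{eq:block-contraction}. If measurable selection feels delicate, convexity of $W_2^2$ along mixtures gives the same conclusion: first approximate $\pi$ by finitely supported couplings, then pass to the limit using weak continuity of $K$.

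\textbf{Main obstacle.} The delicate point is the uniformity of the local estimate. Here $\limsup_{Y\to X}$ must hold with a modulus independent of $X$ so that a single $\delta$ works along the entire path. Uniformity also requires that the representation $Y=e^{hA}X$ with $h=D(X,Y)$ be valid uniformly, that is, that there be a uniform injectivity radius. Both follow from compactness and from the fact that $C_N$ is uniform over $X\in\U(N)$ and $\|A\|_{\HS}=1$, as already noted. This is precisely the input Oliveira's theorem requires, so once it is in hand the theorem can be cited directly.
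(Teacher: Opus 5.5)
Your proposal is correct and follows the paper's proof. The local estimate \eqref{eq:local-expansion} supplies Oliveira's infinitesimal hypothesis with constant $2^{-1/2}$, which yields the pointwise bound $W_{2,D}(K_N^L(X,\cdot),K_N^L(Y,\cdot))\le 2^{-1/2}D(X,Y)$; this is then lifted to $\mu,\nu$ by gluing pointwise couplings over a coupling of the initial laws.

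Your geodesic-subdivision argument is a valid self-contained substitute for the citation. It relies on the uniformity in $X$ and $A$ of the constant in \eqref{eq:relative-log-expansion}, which Oliveira's theorem itself does not require, and your measurable-selection remark makes explicit a step the paper states only informally.
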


\begin{proof}
Equation \eqref{eq:local-expansion} verifies the infinitesimal hypothesis of
the cited path-coupling theorem with $\gamma=2^{-1/2}$.  The theorem first
gives the pointwise bound
\[
 W_{2,D}(K_N^L(X,\cdot),K_N^L(Y,\cdot))
 \le2^{-1/2}D(X,Y).
\]
Now couple $X\sim\mu$ and $Y\sim\nu$, and, conditional on $(X,Y)$, use a
coupling of their $L$-step transition laws achieving this pointwise bound.
The resulting endpoints have laws $\mu K_N^L$ and $\nu K_N^L$.  Taking the
infimum over the initial coupling of $\mu$ and $\nu$ proves
\eqref{eq:block-contraction}.
\end{proof}

\begin{remark}[A comparison principle for general mixers]
\label{rem:general-mixer-coupling}
The coupling above does not require the dyadic structure of $B_N$ until the
collision estimate is inserted.  To make this explicit, fix any
$B\in\U(N)$ and let $K_B$ be the transition kernel of
\[
 X_{t+1}=D_{\xi_{t+1}}BX_t,
 \]
with the same independent uniform diagonal phases.  Define the associated
unistochastic kernel by
\[
 P_B(a,b)=|B_{ba}|^2.
\]
For two independent $P_B$-chains started from distinct states $a,b$, let
$\tau_B$ be their first meeting time, and put
\[
 q_B(s)=\max_{a\ne b}\Pp_{a,b}\{\tau_B>s\},
 \qquad s\ge0.
\]
Then, for all probability measures $\mu,\nu$ on $\U(N)$,
\begin{equation}\label{eq:general-mixer-contraction}
 W_{2,D}(\mu K_B^{s+1},\nu K_B^{s+1})
 \le \sqrt{q_B(s)}\,W_{2,D}(\mu,\nu).
\end{equation}

Indeed, Lemma~\ref{lem:phase-average} remains valid with $B_N,P_N$ replaced
by $B,P_B$.  If $A$ is an initial tangent displacement and
$Z_1=Q(BAB^*)$, the analogue of \eqref{eq:residual-survival} gives
\[
 \E\|R_{s+1}\|_{\HS}^2
 =\sum_{a\ne b}|(Z_1)_{ab}|^2\Pp_{a,b}\{\tau_B>s\}
 \le q_B(s)\|A\|_{\HS}^2.
\]
The same triangular phase coupling and Taylor expansion therefore give the
local contraction factor $\sqrt{q_B(s)}$, and Oliveira's theorem gives
\eqref{eq:general-mixer-contraction}.  The extra step is the initial
projection that produces $Z_1$.

In particular, if
\[
 h_B=\max_{a\ne b}\E_{a,b}\tau_B<\infty,
 \qquad L_B=\lceil2h_B\rceil+1,
\]
then Markov's inequality gives $q_B(L_B-1)\le1/2$.  Haar stationarity and the
diameter bound in \eqref{eq:metric-compare} consequently yield
\[
 W_{2,\rho_N}(\mu K_B^t,H_N)
 \le \pi\,2^{-\frac12\lfloor t/L_B\rfloor}.
\]
Thus a worst-case meeting-time estimate for two independent copies of the
associated classical chain is sufficient for full-law Wasserstein mixing.
The condition is not automatic: for example, if $B$ is a permutation matrix,
two particles started apart never meet and the comparison gives no strict
contraction.
\end{remark}

\begin{proof}[Proof of Theorem~\ref{thm:full-upper}]
Haar measure is stationary under left multiplication by the random increment
$D_\xi B_N$: multiplying a Haar unitary on the left by any fixed unitary
leaves it Haar, and averaging over the random increment changes nothing.
Write $t=q(4N+1)+r$, with $0\le r<4N+1$, and regard the first $r$
transitions as part of the initial law.  Iterating
Proposition~\ref{prop:block-contraction} for the remaining $q$ blocks gives
\[
 W_{2,D}(\nu_{N,t}^{X_0},H_N)
 \le 2^{-q/2}\diam(\U(N),D).
\]
Finally, \eqref{eq:metric-compare} gives
$\rho_N(U,V)\le D(U,V)/\sqrt N$, while
$\diam(\U(N),D)\le\pi\sqrt N$.  Dividing the last display by $\sqrt N$
yields \eqref{eq:full-upper}.
\end{proof}

\section{Full-law lower bounds}\label{sec:full-lower}

The lower bounds use only the number and regularity of the random phase
parameters; they do not rely on the collision argument.  After $t$ steps the
endpoint is a smooth image of an $Nt$-dimensional phase torus.  We cover that
image at a chosen metric resolution and compare the resulting entropy with
the volume of a Haar ball.  This separates the deterministic geometry of the
reachable set from the probabilistic small-ball estimate.

The comparison has a simple entropy interpretation.  At fixed normalized
resolution, specifying the $Nt$ input phases requires on the order of
$Nt\log t$ bits, whereas covering a positive fraction of the
$N^2$-dimensional unitary group requires order $N^2$ bits.  The first budget
is negligible when $t=o(N/\log N)$, so the reachable set can occupy only a
vanishing Haar neighborhood.  At a fixed \emph{unnormalized} Frobenius
resolution, Haar small balls cost order $N^2\log N$; this stronger volume
decay keeps the obstruction visible up to a sufficiently small constant
multiple of $N$.  The proof below makes these two comparisons quantitative.

\subsection{Entropy of the reachable set}

Let $S_{N,t}$ be the support of $\nu_{N,t}^{X_0}$.  The endpoint map
\[
 \Phi_t:(\T^N)^t\longrightarrow\U(N)
\]
is smooth.  If every scalar phase is changed by at most $\eta$, telescoping
the product gives $\rho_N(\Phi_t(\xi),\Phi_t(\xi'))\le t\eta$.  A coordinate
grid therefore proves the covering estimate
\begin{equation}\label{eq:support-cover}
 N_{\rho_N}(S_{N,t},\delta)
 \le\left\lceil\frac{\pi t}{\delta}\right\rceil^{Nt}.
\end{equation}
Here $N_d(S,\delta)$ is the covering number: the smallest number of open
$d$-balls of radius $\delta$ whose union contains $S$.

We require two standard volume estimates, recorded together for later use.

\begin{lemma}[Haar small balls]\label{lem:haar-small-balls}
For each fixed $0<R<\sqrt2$, there is $c_R>0$ such that, uniformly in
$U\in\U(N)$,
\begin{equation}\label{eq:normalized-small-ball}
 H_N\{V:\rho_N(U,V)\le R\}
 \le \exp\{-c_RN^2+O_R(N)\}.
\end{equation}
For each fixed $r>0$, there is $c_r>0$ such that
\begin{equation}\label{eq:absolute-small-ball}
 H_N\{V:\|U-V\|_{\HS}\le r\}
 \le\exp\{-c_rN^2\log N+O_r(N^2)\}.
\end{equation}
\end{lemma}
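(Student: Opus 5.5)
By left invariance of Haar measure, it suffices to take $U=I$. The plan is to bound both small-ball probabilities through the Weyl integration formula for the eigenvalue distribution, since conjugation invariance makes the conditions $\rho_N(I,V)\le R$ and $\|I-V\|_{\HS}\le r$ depend only on the eigenphases. If $V$ has eigenvalues $e^{i\theta_1},\ldots,e^{i\theta_N}$, then
\[
\|I-V\|_{\HS}^2=\sum_j|1-e^{i\theta_j}|^2=\sum_j 4\sin^2(\theta_j/2).
\]
So each ball is the event that the empirical average of $f(\theta)=4\sin^2(\theta/2)$ over the eigenphases is at most $R^2$, respectively at most $r^2/N$. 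Under Haar measure that average is close to $\int f\,d\theta/2\pi=2$, which is why $R<\sqrt2$ is the natural threshold.

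For the normalized bound I would use a large-deviation estimate for the empirical spectral measure of CUE, whose speed is $N^2$ and whose rate function is the logarithmic energy $I(\mu)=-\iint\log|e^{i\alpha}-e^{i\beta}|\,d\mu\,d\mu$, which vanishes only at the uniform measure. To keep the statement non-asymptotic, I would not quote the large-deviation principle but run its upper bound directly. Write the joint density as $Z_N^{-1}\prod_{j<l}|e^{i\theta_j}-e^{i\theta_l}|^2$ with $Z_N=N!(2\pi)^N$. Discretize $\T$ into $O(N)$ arcs, bound the product by $\exp\{-N^2 I_\epsilon(\hat\mu)+O(N)\}$ using a truncated kernel $\log\max(|z-w|,\epsilon)$, and note that $\log Z_N=O(N\log N)$. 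The constraint $\int f\,d\hat\mu\le R^2<2$ forces $\hat\mu$ to stay a fixed distance from uniform, so $I_\epsilon(\hat\mu)\ge c_R>0$ after choosing $\epsilon$ fixed. This leaves the remaining issue that the truncation and discretization errors must be only $O(N)$ rather than $O(N\log N)$. Without care one gets $O(N\log N)$ in the exponent, so some refinement is needed. One candidate is Hoeffding's inequality combined with an exact Toeplitz-determinant computation of the Laplace transform $\E\exp\{-\lambda\Tr(\cdot)\}$. A cleaner route is to use the exact identity, via the Heine/Szeg\H{o} formula,
\[
\E_{H_N}\exp\Bigl\{-\lambda\sum_j f(\theta_j)\Bigr\}=\det\bigl(\widehat{e^{-\lambda f}}(j-l)\bigr)_{j,l<N}.
\]
Taking $\lambda=sN$ and applying the strong Szeg\H{o} asymptotics is hard uniformly. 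I would instead use the Chebyshev bound with $\lambda$ of order $N$ and the Coulomb-gas estimate above, accepting that this step is the delicate one.

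For the unnormalized bound the event $\sum_j f(\theta_j)\le r^2$ forces all but $O(r^2/\delta^2)$ eigenphases into the arc $|\theta|\le\delta$. I would take $\delta$ of order $1/\sqrt N$ times a constant, or argue directly: the event implies $\sum\theta_j^2\lesssim r^2$. Then the integral of the Vandermonde factor over configurations with $\sum\theta_j^2\le Cr^2$ is bounded by scaling. Substituting $\theta_j=\sigma x_j$ gives a factor $\sigma^{N+N(N-1)}=\sigma^{N^2}$ times a bounded Selberg/Mehta-type integral. With $\sigma=r/\sqrt N$ this gives $\exp\{-\tfrac12 N^2\log N+O_r(N^2)\}$ once the Mehta Gaussian integral and $Z_N$ are evaluated by Stirling, using $\prod|e^{ia}-e^{ib}|\le\prod|a-b|$ on small arcs. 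This yields \eqref{eq:absolute-small-ball} with $c_r=1/2$ for every $r$, since the dependence on $r$ enters only the $O_r(N^2)$ term. The main obstacle is the first bound, specifically obtaining the $O_R(N)$ error term uniformly rather than an $o(N^2)$ one.
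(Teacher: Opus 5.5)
Your route, Weyl's integration formula plus a Coulomb-gas bound on the eigenphases, is genuinely different from the paper's, and it works. However, the step you call delicate is not delicate, and one step you treat as routine hides a trap.

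\textbf{Normalized ball.} No discretization is needed. With $k_\epsilon(\alpha,\beta)=\log\max(|e^{i\alpha}-e^{i\beta}|,\epsilon)$, the bound $\prod_{j<l}|e^{i\theta_j}-e^{i\theta_l}|^2\le\exp\{-N^2I_\epsilon(\hat\mu)+N\log(1/\epsilon)\}$ holds pointwise; the diagonal terms are the only loss. Since $(2\pi)^N/Z_N=1/N!\le1$, the $\log Z_N$ term helps rather than hurts. The ball therefore has Haar mass at most $\exp\{-N^2\inf_K I_\epsilon+N\log(1/\epsilon)\}$, where $K=\{\mu:\int 4\sin^2(\theta/2)\,d\mu\le R^2\}$.

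What you actually have to supply is $\inf_K I_\epsilon>0$ for one fixed $\epsilon$. This follows from four facts: $I(\mu)=\sum_{n\ge1}|\hat\mu(n)|^2/n$ vanishes only at the uniform law, which is not in $K$; $K$ is weakly compact; $I$ is lower semicontinuous; and a Dini-type argument applies to $I_\epsilon\uparrow I$.

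In any case your $O(N)$-versus-$O(N\log N)$ worry is moot. Since $c_R$ is unspecified and the error constant may depend on $R$, any $o(N^2)$ error is absorbed by halving $c_R$; the probability is at most $1$ for the finitely many small $N$.

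\textbf{Fixed radius.} It is true that the rescaled integral $\int_{\sum x_j^2\le CN}\prod_{j<l}(x_j-x_l)^2\,dx$ is harmless: it is $e^{O(N^2)}$. But this fails if you bound the indicator by the standard Mehta weight $e^{(CN-\sum x_j^2)/2}$. That produces $\prod_{j\le N}j!=e^{\frac12N^2\log N+O(N^2)}$, which cancels exactly the $-\frac12N^2\log N$ gained from $\sigma^{N^2}$, leaving no $N^2\log N$ decay. You need the tilted weight $e^{N(CN-\sum x_j^2)}$, or simply AM--GM.

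AM--GM in fact gives the whole estimate in one line. On the event, for $N\ge r^2/2$, $\sum_{j<l}|e^{i\theta_j}-e^{i\theta_l}|^2=N^2-|\Tr V|^2\le Nr^2$. Hence $H_N\{\|I-V\|_{\HS}\le r\}\le (N!)^{-1}\bigl(2r^2/(N-1)\bigr)^{N(N-1)/2}$, which is your $c_r=1/2$.

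\textbf{Comparison with the paper.} The paper never uses the eigenvalue density. It converts each ball into a lower bound on $\operatorname{Re}\Tr V$ and deduces that a linear number of diagonal entries are large: at least $\gamma N$ entries with $\operatorname{Re}V_{jj}\ge a/2$, respectively at least $N/2$ entries with $\operatorname{Re}V_{jj}\ge1-r^2/N$. It then exposes those columns one at a time, multiplies complex spherical-cap bounds $(1-b^2)^{d-1}$ in the shrinking orthogonal complements, and pays $2^N$ for a union bound over index sets.

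That argument is elementary, handles both radii by one mechanism, and is explicit in $R$, but it yields only $c_r=3/8$. Your approach needs Weyl's formula and a compactness step that leaves $c_R$ non-explicit. In exchange, it identifies the normalized rate with the log-energy variational problem $\inf_K I$ and gives the better constant $c_r=1/2$.
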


\begin{proof}
By invariance take $U=I$.  The identity
\[
 \rho_N(I,V)^2=2-\frac2N\operatorname{Re}\Tr V
\]
turns a normalized small ball of radius $R<\sqrt2$ into a linear-size upper
tail for $\operatorname{Re}\Tr V$.  Exposing the columns of a Haar unitary
successively and applying the complex spherical-cap bound to a positive
fraction of the diagonal coordinates gives \eqref{eq:normalized-small-ball};
the conditional ambient dimensions remain proportional to $N$, so the
product of the cap probabilities is $e^{-c_RN^2}$.  For fixed absolute radius,
the corresponding caps have angular radius $O(N^{-1/2})$, and their product
adds the factor $\log N$ in \eqref{eq:absolute-small-ball}.  A detailed
column-exposure derivation is given in Appendix~\ref{app:small-balls}.
\end{proof}

\subsection{Extremal distance from Haar}

Fix $R<\sqrt2$ and $\delta>0$ with $R+\delta<\sqrt2$.  Cover $S_{N,t}$ by
$\delta$-balls.  Equations \eqref{eq:support-cover} and
\eqref{eq:normalized-small-ball} imply
\begin{equation}\label{eq:tube-mass}
 H_N\{V:\rho_N(V,S_{N,t})\le R\}
 \le \exp\{Nt\log(Ct/\delta)-c_{R+\delta}N^2+O(N)\}.
\end{equation}
If $t=o(N/\log N)$, the right-hand side tends to zero.  Every coupling of a
law supported on $S_{N,t}$ with Haar measure must therefore transport
$1-o(1)$ mass by distance at least $R$.  More explicitly, let $(X,Y)$ have
an arbitrary coupling of $\nu_{N,t}^{X_0}$ and $H_N$.  Then $X\in S_{N,t}$
almost surely, while \eqref{eq:tube-mass} gives
\[
 \Pp\{\rho_N(Y,S_{N,t})>R\}=1-o(1).
\]
On this event, $\rho_N(X,Y)>R$, and hence
\[
 \E\rho_N(X,Y)
 \ge R\,\Pp\{\rho_N(Y,S_{N,t})>R\}
 =R(1-o(1)).
\]
Taking the infimum over all couplings gives
\[
 \liminf_NW_{1,\rho_N}(\nu_{N,t}^{X_0},H_N)\ge R.
\]
Letting $R\uparrow\sqrt2$, and using $W_1\le W_2\le\sqrt2$, proves
\eqref{eq:maximal-lower}.

This is a geometric witness rather than a low-order observable: it tests the
distance of a Haar matrix from the entire reachable set $S_{N,t}$.  That is
why it can distinguish the full law long after one- and two-copy tests have
already equilibrated.

For total variation, $\Phi_t$ maps a real manifold of dimension $Nt$ into the
$N^2$-dimensional manifold $\U(N)$.  If $t<N$, Sard's theorem (or the area
formula) shows that $S_{N,t}$ has Haar measure zero, proving
\eqref{eq:tv-singular}.

Finally fix an absolute Frobenius radius $r>0$.  If each scalar phase is
rounded with mesh $r/(2t\sqrt N)$, telescoping the product changes its endpoint
by at most $r/2$ in Frobenius norm.  Thus
\begin{equation}\label{eq:absolute-support-cover}
 N_{\|\cdot\|_{\HS}}(S_{N,t},r/2)
 \le \left(\frac{Ct\sqrt N}{r}\right)^{Nt}.
\end{equation}
Combining \eqref{eq:absolute-support-cover} with
\eqref{eq:absolute-small-ball}, the Haar mass of the $r$-neighborhood of
$S_{N,t}$ is at most
\[
 \exp\left\{Nt\log\!\left(\frac{Ct\sqrt N}{r}\right)
       -c_{2r}N^2\log N+O_r(N^2)\right\}.
\]
For $t\le c_r'N$ and a sufficiently small constant $c_r'>0$, this tends to
zero.  To obtain the stated bound for a prescribed $\varepsilon>0$, take
$r=2\varepsilon$ and choose $c_\varepsilon=c_{2\varepsilon}'$.  For all
sufficiently large $N$, the Haar mass of the $r$-neighborhood is at most
$1/2$.  Every coupling must then move at least half its mass by distance at
least $r$, and therefore
\[
 W_{1,d_F}(\nu_{N,t}^{X_0},H_N)\ge r/2=\varepsilon.
\]
Since $W_{2,d_F}\ge W_{1,d_F}$, this proves the final claim of
Theorem~\ref{thm:full-lower}.

\section{The first moment}\label{sec:one-design}

The one-copy averaged conjugation channel, or one-copy twirl, asks how an
observable $A$ evolves after averaging over the random walk:
\[
 \cG_{\nu_{N,t}}^{(1)}(A)=\E[X_tAX_t^*].
\]
This section proves both sides of the exact $1$-design statement.  For the
upper bound, phase averaging reduces the problem to the classical kernel
$P_N$.  For the lower bound, we follow one Fourier observable that survives
until the final dyadic scale.

The first average that enters this channel is over the fresh diagonal
phases.  Let
\[
 \cD(A)=\E_\xi D_\xi A D_\xi^*
\]
be this diagonal phase average, also called the diagonal phase twirl.
Entrywise,
\[
 (\cD(A))_{ab}=\one_{a=b}A_{aa},
\]
so $\cD$ simply deletes the off-diagonal entries.  The averaged channel for
one noisy layer is therefore
\[
 T:=\cD\circ\Ad_{B_N},
 \qquad T(A)=\E_\xi[D_\xi B_NA B_N^*D_\xi^*].
\]
After one application of $T$, the output is diagonal.  If
$A=\diag(v)$, then
\[
 (T(A))_{bb}=\sum_a |(B_N)_{ba}|^2v_a
 =\sum_aP_N(a,b)v_a.
\]
Thus, after identifying a diagonal matrix with its diagonal vector, every
subsequent layer applies $P_N^{\mathsf T}$.

\begin{proposition}[Exact $1$-design]\label{prop:one-design}
For $t\ge k+1$,
\[
 \cG_{\nu_{N,t}^{X_0}}^{(1)}(A)=\frac{\Tr A}{N}I
\]
for every $A\in M_N(\C)$ and every deterministic $X_0\in\U(N)$.
\end{proposition}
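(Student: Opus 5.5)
The plan is to write the one-copy twirl as an iterate of the averaged layer channel $T=\cD\circ\Ad_{B_N}$ and then invoke exact dyadic mixing. Since the phase strings in different steps are independent and $X_t=D_{\xi_t}B_N\cdots D_{\xi_1}B_NX_0$, we can average over the phases one layer at a time, starting from the innermost. Each layer then contributes one application of $T$, and we obtain
\[
 \cG_{\nu_{N,t}^{X_0}}^{(1)}(A)=T^t(X_0AX_0^*).
\]
Independence lets the expectation factor through the nested conjugations, because each $\Ad$ is linear. It therefore suffices to show that $T^t(A')=\frac{\Tr A'}{N}I$ for every $A'$ and every $t\ge k+1$. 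Here $A'=X_0AX_0^*$, and $\Tr A'=\Tr A$.

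The first application of $T$ is handled directly. It produces the diagonal matrix $\diag(v)$ with
\[
 v_b=\sum_a P_N(a,b)(A')_{aa}.
\]
This formula holds because $\cD$ keeps exactly the diagonal entries of $B_NA'B_N^*$, and
\[
 (B_NA'B_N^*)_{bb}=\sum_{a,a'}(B_N)_{ba}A'_{aa'}\overline{(B_N)_{ba'}}.
\]
At this point we must be careful: $A'$ need not be diagonal, so the off-diagonal terms $a\ne a'$ survive in $v_b$. The plan should therefore not claim $v=P_N^{\mathsf T}\diag(A')$. Instead, record only two facts:
\begin{itemize}
 \item the output of the first layer is diagonal;
 \item its trace equals $\Tr A'$, since $\cD$ and unitary conjugation both preserve trace.
\end{itemize}

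For the subsequent layers, the discussion before the proposition shows that, on diagonal matrices, $T$ acts as $P_N^{\mathsf T}$ on the diagonal vector. After $t-1\ge k$ further layers, the resulting vector is $(P_N^{\mathsf T})^{t-1}v$. Proposition~\ref{prop:exact-classical} gives $P_N^k=\Pi$, the uniform kernel. Consequently $P_N^{t-1}=\Pi$ for every $t-1\ge k$, since $\Pi P_N=\Pi$ by bistochasticity. Hence each entry of the final vector is
\[
 \frac1N\sum_a v_a=\frac{\Tr A'}{N}=\frac{\Tr A}{N},
\]
and the channel output is $\frac{\Tr A}{N}I$, which is exactly the statement.

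The only delicate point is the bookkeeping of the first layer. It costs one step to reach a diagonal matrix before the classical chain takes over, which is why the threshold is $k+1$ rather than $k$. We must also confirm that the trace, rather than the diagonal of $A$, is the conserved quantity passed to the chain. Everything else follows directly from the exact mixing identity $P_N^k=\Pi$.
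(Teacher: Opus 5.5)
Your proposal is correct and follows the paper's proof: you conjugate the input by $X_0$ (trace unchanged), note that the first averaged layer outputs a diagonal matrix with the same trace, and then apply $(P_N^{\mathsf T})^k=\Pi$ to make the diagonal constant. Delete the first displayed formula $v_b=\sum_a P_N(a,b)(A')_{aa}$, since, as you yourself point out, it is wrong for non-diagonal $A'$; the argument only uses that the first-layer output is diagonal and has trace $\Tr A$.
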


\begin{proof}
Right multiplication by the fixed $X_0$ only conjugates the input to the
twirl: it replaces $A$ by $X_0AX_0^*$, without changing its trace.  The first
averaged layer produces a diagonal matrix.  By the calculation above, the
next $k$ layers apply $(P_N^{\mathsf T})^k=\Pi$, where
Proposition~\ref{prop:exact-classical} identifies $\Pi$ as averaging all
coordinates.  The resulting diagonal is therefore constant, with value
$\Tr(A)/N$.  This is exactly the Haar one-copy twirl.
\end{proof}

We next prove that this time cannot be improved.  It is convenient to work
with the adjoint channel, because diagonal Fourier matrices then evolve into
one another explicitly.  For a linear map $\Phi$ on matrices, write
$\Phi^*$ for its Hilbert--Schmidt adjoint, characterized by
$\langle \Phi(A),B\rangle_{\HS}=\langle A,\Phi^*(B)\rangle_{\HS}$.
The same argument will also give a lower bound for the two-copy channel.

\begin{proposition}[A surviving Fourier mode]\label{prop:design-lower}
Let
\[
 \Delta_{N,t}^{(m)}=
 \cG_{\nu_{N,t}^{X_0}}^{(m)}-\cG_{H_N}^{(m)},\qquad m=1,2.
\]
For $1\le t\le k$,
\begin{equation}\label{eq:design-lower-beta}
 \|\Delta_{N,t}^{(m)}\|_\diamond
 \ge \prod_{j=0}^{t-2}\left(1-\frac{2^{j+1}}{N}\right)
 \ge c_*:=\prod_{j=1}^{\infty}(1-2^{-j}).
\end{equation}
More generally, for every integer $L\ge1$ and
$t\le k-L+1$, the right-hand side is at least
\begin{equation}\label{eq:design-lower-epsilon}
 c_L:=\prod_{j=L}^{\infty}(1-2^{-j}),
\end{equation}
where $c_L\uparrow1$ as $L\to\infty$.
\end{proposition}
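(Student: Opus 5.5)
We follow the single Fourier observable suggested in the introduction.

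\emph{Test operator.} Put $e_r=\diag(e^{2\pi i r a/N})_{a\in\Z_N}$ and choose the input $A=X_0^* e_{r_0} X_0$ with $r_0=2^{k-t}$. This $A$ is unitary, so $\|A\|_1=N$, and it is traceless. Hence the Haar twirl sends it to zero, and
\[
\Delta^{(1)}_{N,t}(A)=\E[X_tAX_t^*].
\]
The first averaged layer $T=\cD\circ\Ad_{B_N}$ applied to $e_{r_0}$ gives a diagonal matrix. Its diagonal entries are $\sum_a P_N(a,b)e^{2\pi i r_0 a/N}$.

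\emph{Evolution of the mode.} It is cleaner to run the dynamics in the adjoint picture, where diagonal matrices evolve by $P_N$ rather than $P_N^{\mathsf T}$. Use \eqref{eq:affine-chain}: write $Y_{s+1}=2Y_s+Z$ with $Z\sim q_N$. Then
\[
P_N f_r(a)=\E\, e^{2\pi i r(2a+Z)/N}=\widehat q_N(-r)\,f_{2r}(a),
\]
where $f_r(a)=e^{2\pi i r a/N}$. So $P_N$ maps the character $f_r$ to $\widehat q_N(r)f_{2r}$, and the modulus of $\widehat q_N(r)$ is given by Lemma~\ref{lem:tent}.

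Dualize: pair the output with the test output $e_{-r}$, and compute $\langle \Delta^{(1)}(A),B\rangle$ via the adjoint channel. The adjoint of one averaged layer sends a diagonal $B=\diag(g)$ to $B_N^*\diag(g)B_N$. After the phase average of the preceding layer, only the diagonal $P_Ng$ survives.

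Start from $g=f_{r}$ with $r=1$ and apply $t-1$ diagonal-preserving adjoint layers. This produces
\[
\prod_{j=0}^{t-2}\widehat q_N(2^j)\, f_{2^{t-1}}.
\]
The last adjoint layer is not followed by a phase average, so it simply conjugates by the unitary $B_N$, and the outer $X_0$ is a further unitary conjugation. We therefore get
\[
\Delta^{(1)*}(e_1)=\Big(\prod_{j=0}^{t-2}\widehat q_N(2^j)\Big)\,X_0^*B_N^*\, e_{2^{t-1}}\, B_N X_0 .
\]
This is a unitary multiple, and no Haar correction appears because $e_1$ is traceless.

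Since $2^j<M$ for $j\le t-2\le k-2$, Lemma~\ref{lem:tent} gives $\widehat q_N(2^j)=1-2^{j+1}/N$. For the norm bound, recall that $\|\Phi\|_{1\to1}=\|\Phi^*\|_{\infty\to\infty}$. Apply this with the input $e_1$, which has operator norm $1$ and output operator norm equal to the product. This yields
\[
\|\Delta^{(1)}\|_\diamond\ge\|\Delta^{(1)}\|_{1\to1}\ge\prod_{j=0}^{t-2}\Big(1-\frac{2^{j+1}}{N}\Big).
\]

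\emph{Comparison constants.} Substitute $i=k-j-1$, so $2^{j+1}/N=2^{-i}$. As $j$ runs over $0,\dots,t-2$, the index $i$ runs from $k-t+1$ to $k-1$. When $t\le k$ we have $k-t+1\ge1$, so the product is at least $c_*$. When $t\le k-L+1$ we have $k-t+1\ge L$, so it is at least $c_L$.

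\emph{Two copies.} Reduce $m=2$ to $m=1$ via the partial trace. If $\mu$ and Haar both act on $A\otimes I$, then $\Tr_2\,\cG^{(2)}_\mu(A\otimes I/N)=\cG^{(1)}_\mu(A)$. Partial trace is $1\to1$ contractive, and $\|A\otimes I/N\|_1=\|A\|_1$. Hence
\[
\|\Delta^{(2)}\|_\diamond\ge\|\Delta^{(2)}\|_{1\to1}\ge\|\Delta^{(1)}\|_{1\to1}.
\]

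\emph{Main obstacle.} The delicate step is the bookkeeping of which layer lacks a phase average: the first layer in the forward picture, the last in the adjoint picture. A related point is checking that the frequencies $2^j$, $j\le t-2$, stay below $M$, which is exactly what the condition $t\le k$ guarantees. I would double-check this with $t=1$, where the bound is the empty product $1$: there $\Delta^*(e_1)$ is a pure unitary conjugate, consistent with the claim.
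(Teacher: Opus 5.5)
Your adjoint-picture argument is correct and is essentially the paper's proof. You propagate $e_1$ (the paper's $Z_1$) backward through $t$ layers to get $\prod_{j=0}^{t-2}\widehat q_N(2^j)$ times a unitary, use tracelessness to remove the Haar term, apply $\|\Phi\|_{1\to1}=\|\Phi^*\|_{\infty\to\infty}$, and reduce two copies to one via $A\mapsto A\otimes I/N$ and the partial trace. Keeping $X_0$ explicit and using $1\to1$ norms in place of diamond norms changes nothing.

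You should, however, delete the opening ``test operator'' paragraph: it is never used, and it would fail. The first averaged layer acts on diagonals by $P_N^{\mathsf T}$, which annihilates every odd character because the terms $a$ and $a+M$ cancel. For instance, at $t=k$ the proposed input $X_0^*e_{2^{k-t}}X_0=X_0^*e_1X_0$ is sent to zero.
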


\begin{proof}
Right multiplication of the walk by the deterministic matrix $X_0$ merely
precomposes its twirl by a unitary channel, so the relevant diamond distances
do not depend on $X_0$.  We take $X_0=I$ and write
\[
 T=\cD\circ\Ad_{B_N}
\]
for the averaged one-layer channel.  Its adjoint is
$T^*=\Ad_{B_N^*}\circ\cD$.

For $r\in\Z_N$, let
\[
 Z_r=\diag(e^{2\pi ira/N})_{a\in\Z_N}.
\]
The matrix $Z_r$ is the observable corresponding to the Fourier character
$a\mapsto e^{2\pi ira/N}$ of the position populations.  The adjoint channel
is useful because it propagates this observable backward through one layer.
Using $P_N(a,b)=q_N(b-2a)$ and Lemma~\ref{lem:tent}, direct calculation of
the diagonal gives
\begin{equation}\label{eq:diagonal-fourier-evolution}
 \cD(B_N^*Z_rB_N)=\widehat q_N(r)Z_{2r}.
\end{equation}
Indeed, its $a$th diagonal entry is
\begin{align*}
 (B_N^*Z_rB_N)_{aa}
 &=\sum_{b\in\Z_N}|(B_N)_{ba}|^2e^{2\pi irb/N}\\
 &=\sum_{b\in\Z_N}q_N(b-2a)e^{2\pi irb/N}\\
 &=e^{2\pi ir(2a)/N}\widehat q_N(r),
\end{align*}
which is precisely the $a$th diagonal entry of the right-hand side of
\eqref{eq:diagonal-fourier-evolution}.  The sign convention in the Fourier
transform is immaterial here because $q_N$ is symmetric.

The first application of the adjoint channel gives
\[
 T^*Z_1=B_N^*Z_1B_N.
\]
At the next application, the diagonal projection in $T^*$ acts on this
matrix.  Equation~\eqref{eq:diagonal-fourier-evolution} replaces its
diagonal by $\widehat q_N(1)Z_2$, after which conjugation gives
\[
 (T^*)^2Z_1=\widehat q_N(1)B_N^*Z_2B_N.
\]
Repeating the same calculation doubles the frequency and contributes one
new multiplier at each step.  Induction therefore yields, for $t\ge1$,
\begin{equation}\label{eq:adjoint-fourier-survival}
 (T^*)^tZ_1=
 \left\{\prod_{j=0}^{t-2}\widehat q_N(2^j)\right\}
 B_N^*Z_{2^{t-1}}B_N.
\end{equation}
The empty product for $t=1$ is one.  Thus the frequency doubles after each
layer, while its amplitude is multiplied by the corresponding tent
multiplier.  Since $Z_1$ has trace zero, the adjoint Haar twirl sends it to
zero.  The last matrix in
\eqref{eq:adjoint-fourier-survival} is unitary, and hence has operator norm
one.  Testing the adjoint channel on this one operator gives
\[
 \|\Delta_{N,t}^{(1)}\|_\diamond
 \ge \|\Delta_{N,t}^{(1)}\|_{1\to1}
 =\|(\Delta_{N,t}^{(1)})^*\|_{\infty\to\infty}
 \ge\prod_{j=0}^{t-2}\widehat q_N(2^j).
\]
In the definition \eqref{eq:diamond-definition}, taking a one-dimensional
ancilla gives the ordinary induced trace norm.  This proves the first
inequality.  The equality is the usual duality between the induced trace norm
and induced operator norm.
For $t\le k$, Lemma~\ref{lem:tent} turns the last product into the first
quantity in \eqref{eq:design-lower-beta}.  At $t=k$ it equals
$\prod_{j=1}^{k-1}(1-2^{-j})\ge c_*$.  Stopping $L$ dyadic scales earlier
similarly gives \eqref{eq:design-lower-epsilon}.

It remains to pass the obstruction from one copy to two.  Define the
completely positive, trace-preserving maps
\[
 J(A)=A\otimes I/N,
 \qquad R(C)=\Tr_2 C.
\]
The map $J$ adds a maximally mixed second system, while $R$ discards it.
For every probability measure $\mu$ on $\U(N)$,
\[
 R\circ\cG_\mu^{(2)}\circ J=\cG_\mu^{(1)}.
\]
Both $J$ and $R$ have diamond norm one.  Therefore
$\|\Delta_{N,t}^{(2)}\|_\diamond
\ge\|\Delta_{N,t}^{(1)}\|_\diamond$, completing the proof.
\end{proof}

Proposition~\ref{prop:one-design} and
Proposition~\ref{prop:design-lower} show that the exact $1$-design time is
$k+1$ (at $t=0$ the law is a point mass and $N\ge2$).  Given
$0<\varepsilon<1$, choose $L=L(\varepsilon)$ so that
$c_L>\varepsilon$.  Then neither the one-copy nor the two-copy channel can be
$\varepsilon$-close to Haar before $k-L+2$ steps.  This is the
$k-O_\varepsilon(1)$ lower bound asserted after
Theorem~\ref{thm:design-main}.

\section{The two-copy phase average and its surviving sectors}\label{sec:two-copy}

Sections~\ref{sec:two-copy}--\ref{sec:design-proof} prove the approximate
$2$-design upper bound in Theorem~\ref{thm:design-main}.  The present section
identifies the sectors that survive the two-copy phase average.  Section~\ref{sec:focusing}
reduces their spatial degrees of freedom, Section~\ref{sec:recursions} derives
the resulting Fourier recursions, Section~\ref{sec:weighted} proves their
contraction, and Section~\ref{sec:design-proof} assembles these estimates and
converts them to diamond norm.  The calculation verifying the weighted
contraction is given in Appendix~\ref{app:weighted-verification}.

We next identify the finite Markov chains that govern the two-copy twirl
$\cG_{\nu_{N,t}}^{(2)}$.
The one-copy calculation became classical after phase averaging because only
diagonal matrix entries survived.  On two copies, more information survives:
the same two basis labels may occur in either order.  The bookkeeping is
therefore richer, but the guiding idea is unchanged.  We first identify
exactly what phase averaging retains and then ask how $B_N$ transports
that retained information.

\subsection{Representation-theoretic setup}

Only one elementary piece of representation theory is needed.  A unitary
representation of a group assigns to every group element a unitary linear
map, in a way that respects multiplication.  Here the group is $\U(N)$.  Let
$\cH=\C^N$ be the one-copy Hilbert space; the representation space is the
two-copy Hilbert space
\[
 \cH^{\otimes2}=\C^N\otimes\C^N,
\]
and the representation is the tensor-square action
\[
 \rho_2(U)=U^{\otimes2}=U\otimes U.
\]
A subspace is called invariant if every $U^{\otimes2}$ maps it into itself.
An invariant subspace is irreducible if it has no proper nonzero subspace
that is itself invariant under every $U^{\otimes2}$.

The swap operator $F(u\otimes v)=v\otimes u$ commutes with
$U^{\otimes2}$ for every $U$.  Its $+1$ and $-1$ eigenspaces are therefore
invariant.  They are denoted
\[
 \Sym^2(\C^N)\quad\text{and}\quad\Lambda^2(\C^N),
\]
and are called the symmetric and antisymmetric tensor squares.  Thus
\begin{equation}\label{eq:tensor-square-decomposition}
 \cH^{\otimes2}=\Sym^2(\C^N)\oplus\Lambda^2(\C^N).
\end{equation}
For the full unitary group these two invariant subspaces are irreducible and
inequivalent.  Concretely, there is no nonzero linear map $T$ from one sector
to the other satisfying
$T(U^{\otimes2}v)=U^{\otimes2}T(v)$ for every $U$ and $v$; such a map would
be called an intertwiner.  This fact is the representation-theoretic reason
that Haar averaging treats the two spaces as separate sectors.

We now choose concrete bases.  For $0\le i\le j<N$ and $0\le i<j<N$,
respectively, put
\begin{align*}
 s_{ij}&=\frac{e_i\otimes e_j+e_j\otimes e_i}
 {\sqrt{2(1+\one_{i=j})}},\\
 a_{ij}&=\frac{e_i\otimes e_j-e_j\otimes e_i}{\sqrt2}.
\end{align*}
where $e_0,\ldots,e_{N-1}$ are the standard coordinate vectors.  These are
orthonormal bases of $\Sym^2(\C^N)$ and $\Lambda^2(\C^N)$.  It is useful to
index either basis by an unordered pair, writing $s_\alpha=s_{ij}$ or
$a_\alpha=a_{ij}$ as appropriate.

The particular unitary $B_N^{\otimes2}$ is one member of this representation
and preserves both spaces.  Let
\[
 S_N=\Sym^2(B_N),\qquad A_N=\Lambda^2(B_N),
\]
denote the matrices of its restrictions in the bases just displayed.  In
particular, their matrix entries mean
\begin{equation}\label{eq:restriction-matrix-entries}
 (S_N)_{\alpha\beta}
 =\langle s_\alpha,B_N^{\otimes2}s_\beta\rangle,
 \qquad
 (A_N)_{\alpha\beta}
 =\langle a_\alpha,B_N^{\otimes2}a_\beta\rangle.
\end{equation}
Thus $\beta$ labels the input basis vector and $\alpha$ the output basis
vector.  Squaring the absolute values of these entries defines bistochastic
population kernels, meaning nonnegative matrices whose row and column sums
are one:
\begin{equation}\label{eq:population-kernels}
 P_N^+(\alpha,\beta)=|(S_N)_{\alpha\beta}|^2,
 \qquad
 P_N^-(\alpha,\beta)=|(A_N)_{\alpha\beta}|^2.
\end{equation}
Thus $P_N^+(\alpha,\beta)$, for example, is the transition weight from the
symmetric pair $\beta$ to the symmetric pair $\alpha$ after phase
information is discarded.  The decomposition
\eqref{eq:tensor-square-decomposition} is only a convenient coordinate
decomposition; it does not impose bosonic or fermionic statistics on the
underlying system.

\subsection{Surviving sectors}

This subsection has three steps.  First, the initial phase average projects
the full two-copy operator space onto a much smaller surviving space.
Second, every later noisy layer acts within that space as four independent
finite-dimensional evolutions.  Third, Haar averaging identifies the target
of those evolutions: uniform population in each irreducible sector and no
coherence between the sectors.  This is the link between the two-copy twirl
and the finite chains analyzed in the following sections.

Let $\cD^{(2)}$ denote the two-copy diagonal phase average, or phase twirl,
\[
 \cD^{(2)}(C)=\E_\xi D_\xi^{\otimes2}C(D_\xi^*)^{\otimes2}.
\]
The averaged channel for one complete noisy layer is
\begin{equation}\label{eq:one-layer-two-copy}
 \mathcal T^{(2)}(C)
 =\cD^{(2)}\bigl(B_N^{\otimes2}C(B_N^*)^{\otimes2}\bigr).
\end{equation}
For $X_0=I$, independence of the freshly sampled phases gives
\begin{equation}\label{eq:iterate-one-layer-channel}
 \cG_{\nu_{N,t}}^{(2)}=(\mathcal T^{(2)})^t.
\end{equation}
For a deterministic $X_0$, this becomes the equally explicit formula
\[
 \cG_{\nu_{N,t}^{X_0}}^{(2)}(C)
 =(\mathcal T^{(2)})^t
 \bigl(X_0^{\otimes2}C(X_0^*)^{\otimes2}\bigr).
\]

Let $\mathcal R=\operatorname{ran}\cD^{(2)}$ be the range of the phase
average, namely the set of its possible outputs.  Equation
\eqref{eq:one-layer-two-copy} shows that the first averaged layer sends every
operator into $\mathcal R$.  All later layers therefore use only the
restriction
\[
 \mathcal K=\mathcal T^{(2)}|_{\mathcal R},
 \qquad
 (\mathcal T^{(2)})^t=\mathcal K^{t-1}\mathcal T^{(2)},\quad t\ge1.
\]
Thus the immediate questions are: what information belongs to $\mathcal R$,
and how does $\mathcal K$ move it?

We use the standard rank-one-operator notation
$|v\rangle\langle w|:z\mapsto v(w^*z)$, and abbreviate
$|i,j\rangle=e_i\otimes e_j$.
In the tensor basis,
\[
 \cD^{(2)}(|i,j\rangle\langle a,b|)=0
 \quad\text{unless}\quad \{i,j\}=\{a,b\}
\]
as multisets.  Hence its range is the orthogonal direct sum of
\begin{enumerate}
\item the symmetric populations $|s_{ij}\rangle\langle s_{ij}|$;
\item the antisymmetric populations $|a_{ij}\rangle\langle a_{ij}|$;
\item the coherences $|s_{ij}\rangle\langle a_{ij}|$ and their adjoints,
      for $i<j$.
\end{enumerate}
Indeed, the diagonal phase attached to
$|i,j\rangle\langle a,b|$ is
$e^{i(\xi_i+\xi_j-\xi_a-\xi_b)}$.  Independent uniform averaging kills this
character unless every phase occurs with the same multiplicity on the two
sides, which is exactly the multiset condition above.  This elementary
character test is the two-copy analogue of deleting off-diagonal entries in
the one-copy calculation.

These are precisely the four types of coordinates announced above.  For
example, when $N=4$, the full operator space on two copies has dimension
$4^4=256$, while this phase average retains only $10$ symmetric populations,
$6$ antisymmetric populations, and two sets of $6$ coherences, for a total
of $28$ dimensions.  The reduction is already substantial before any mixing
estimate is used.

We next compute $\mathcal K$ on the corresponding basis operators.
Consider first a symmetric population indexed by an unordered pair $\beta$.
By \eqref{eq:restriction-matrix-entries},
\[
 B_N^{\otimes2}s_\beta
 =\sum_\alpha(S_N)_{\alpha\beta}s_\alpha.
\]
Conjugating $|s_\beta\rangle\langle s_\beta|$ therefore produces terms
$|s_\alpha\rangle\langle s_\gamma|$.  The phase average deletes every term
with $\alpha\ne\gamma$, leaving
\begin{equation}\label{eq:symmetric-population-action}
 \mathcal T^{(2)}
 \bigl(|s_\beta\rangle\langle s_\beta|\bigr)
 =\sum_\alpha |(S_N)_{\alpha\beta}|^2
 |s_\alpha\rangle\langle s_\alpha|.
\end{equation}
Thus symmetric population weights evolve by $P_N^+$.  The identical
calculation with the antisymmetric basis gives
\begin{equation}\label{eq:antisymmetric-population-action}
 \mathcal T^{(2)}
 \bigl(|a_\beta\rangle\langle a_\beta|\bigr)
 =\sum_\alpha |(A_N)_{\alpha\beta}|^2
 |a_\alpha\rangle\langle a_\alpha|,
\end{equation}
so antisymmetric population weights evolve by $P_N^-$.

Now consider a coherence basis operator
$|s_\beta\rangle\langle a_\beta|$.  Conjugation by $B_N^{\otimes2}$ sends
this rank-one operator to
\[
 |B_N^{\otimes2}s_\beta\rangle
 \langle B_N^{\otimes2}a_\beta|.
\]
The two vectors have the expansions
\[
 B_N^{\otimes2}s_\beta
 =\sum_\alpha(S_N)_{\alpha\beta}s_\alpha,
 \qquad
 B_N^{\otimes2}a_\beta
 =\sum_\gamma(A_N)_{\gamma\beta}a_\gamma.
\]
When the second vector is turned into a bra, its scalar coefficients are
complex-conjugated.  Before phase averaging, the resulting operator is
therefore
\[
 \sum_{\alpha,\gamma}(S_N)_{\alpha\beta}
 \overline{(A_N)_{\gamma\beta}}
 |s_\alpha\rangle\langle a_\gamma|.
\]
The phase average retains only terms for which the two output pairs agree,
that is, $\alpha=\gamma$.  Hence, for a pair $\beta$ with distinct entries,
\begin{equation}\label{eq:coherence-action}
 \mathcal T^{(2)}
 \bigl(|s_\beta\rangle\langle a_\beta|\bigr)
 =\sum_\alpha (S_N)_{\alpha\beta}
       \overline{(A_N)_{\alpha\beta}}
       |s_\alpha\rangle\langle a_\alpha|.
\end{equation}
This motivates the coherence matrix
\begin{equation}\label{eq:coherence-matrix}
 C_N(\alpha,\beta)
 =(S_N)_{\alpha\beta}\overline{(A_N)_{\alpha\beta}},
 \qquad \alpha,\beta\in\{(i,j):i<j\}.
\end{equation}
Taking adjoints in \eqref{eq:coherence-action} shows that the reverse
coherences $|a_\beta\rangle\langle s_\beta|$ evolve by the entrywise
conjugate matrix $\overline{C_N}$.

To state the resulting decomposition formally, define
\begin{align*}
 \mathcal R_+
 &=\operatorname{span}\{|s_\alpha\rangle\langle s_\alpha|:\alpha=(i,j),
       \ i\le j\},\\
 \mathcal R_-
 &=\operatorname{span}\{|a_\alpha\rangle\langle a_\alpha|:\alpha=(i,j),
       \ i<j\},\\
 \mathcal C_{+-}
 &=\operatorname{span}\{|s_\alpha\rangle\langle a_\alpha|:\alpha=(i,j),
       \ i<j\},\\
 \mathcal C_{-+}
 &=\operatorname{span}\{|a_\alpha\rangle\langle s_\alpha|:\alpha=(i,j),
       \ i<j\}.
\end{align*}
The range of the phase average is the orthogonal direct sum
\begin{equation}\label{eq:surviving-space-decomposition}
 \mathcal R
 =\mathcal R_+\oplus\mathcal R_-
  \oplus\mathcal C_{+-}\oplus\mathcal C_{-+}.
\end{equation}
Equations \eqref{eq:symmetric-population-action},
\eqref{eq:antisymmetric-population-action}, and
\eqref{eq:coherence-action}, together with the adjoint of the last equation,
show that each of these four subspaces is invariant under $\mathcal K$:
\[
 \mathcal K(\mathcal R_+)\subseteq\mathcal R_+,
 \quad \mathcal K(\mathcal R_-)\subseteq\mathcal R_-,
 \quad \mathcal K(\mathcal C_{+-})\subseteq\mathcal C_{+-},
 \quad \mathcal K(\mathcal C_{-+})\subseteq\mathcal C_{-+}.
\]
In the displayed bases of these subspaces, the four restrictions of
$\mathcal K$ are represented respectively by $P_N^+$, $P_N^-$, $C_N$, and
$\overline{C_N}$.  Equivalently, relative to the direct sum
\eqref{eq:surviving-space-decomposition}, the matrix of $\mathcal K$ is block
diagonal:
\begin{equation}\label{eq:four-blocks}
 P_N^+\oplus P_N^-\oplus C_N\oplus\overline{C_N}.
\end{equation}

We next identify what these four blocks must approach.  Intuitively, Haar
averaging forgets which particular pair carries population, but it cannot
move total mass between the symmetric and antisymmetric subspaces because
every $U^{\otimes2}$ preserves both.  It therefore leaves one uniform
population in each sector.  By contrast, a phase relation between the two
sectors is not invariant under the full unitary group, so Haar averaging
removes it.

The formal argument is the representation-theoretic version of this
intuition.  The Haar two-copy twirl averages the conjugation action
\[
 C\longmapsto U^{\otimes2}C(U^*)^{\otimes2}.
\]
Such averaging is the orthogonal projection onto the \emph{commutant}: the
space of operators that commute with every $U^{\otimes2}$.  Schur's lemma
says that an operator commuting with an irreducible representation must be a
scalar multiple of the identity, and that there is no nonzero intertwining
map between inequivalent irreducible representations.  Applied to
\eqref{eq:tensor-square-decomposition}, it follows that Haar averaging keeps
only a scalar identity on each of the symmetric and antisymmetric sectors and
removes the operators between them.

In population coordinates, the two sector identities are represented by
uniform vectors.  The restriction of the Haar twirl to the range of the
phase average is therefore
\begin{equation}\label{eq:haar-blocks}
 \Pi_+\oplus\Pi_-\oplus0\oplus0,
\end{equation}
where $\Pi_\pm$ are the uniform rank-one projections on the corresponding
pair spaces.  Explicitly, $\Pi_\pm$ replaces a population vector by the
constant vector having the same mean.

Equations \eqref{eq:four-blocks} and \eqref{eq:haar-blocks} now turn the
abstract two-design problem into three finite-chain estimates:
\begin{equation}\label{eq:two-copy-proof-goals}
 P_N^+\longrightarrow\Pi_+,
 \qquad P_N^-\longrightarrow\Pi_-,
 \qquad C_N\longrightarrow0.
\end{equation}
The first two assertions say that the symmetric and antisymmetric
populations approach their uniform laws.  The third says that coherence
between the two inequivalent representations disappears; the
reverse-coherence block $\overline{C_N}$ then disappears as well.  More
precisely, since
$\cG_{\nu_{N,t}}^{(2)}=\mathcal K^{t-1}\mathcal T^{(2)}$ and
$\|\mathcal T^{(2)}\|_{2\to2}=1$, while Haar invariance gives
$\cG_{H_N}^{(2)}=\cG_{H_N}^{(2)}\mathcal T^{(2)}$, the block decompositions
\eqref{eq:four-blocks} and \eqref{eq:haar-blocks} imply, for $t\ge1$,
\begin{align}\label{eq:block-reduction-bound}
 &\|\cG_{\nu_{N,t}}^{(2)}-\cG_{H_N}^{(2)}\|_{2\to2} \notag\\
 &\quad\le
 \max\left\{
 \|(P_N^+)^{t-1}-\Pi_+\|_{2\to2},
 \|(P_N^-)^{t-1}-\Pi_-\|_{2\to2},
 \|C_N^{t-1}\|_{2\to2}
 \right\}.
\end{align}
The omitted reverse-coherence term has the same norm as $C_N^{t-1}$.
Thus proving the three limits in \eqref{eq:two-copy-proof-goals}, with
quantitative bounds, directly bounds the actual $t$-step two-copy twirl.

The next sections establish those bounds in stages.  The coherence matrix
satisfies $C_N^k=0$, so that block vanishes exactly after $k$ restricted
layers.  For each population chain, translation covariance implies that
after $k$ steps its adjoint maps every observable into the
translation-invariant subspace.  Such observables depend only on the
circular separation of the two indices, reducing a chain with order $N^2$
states to a one-dimensional separation chain with order $N$ states.  We
then express this quotient chain in a centered-cosine basis and prove that
the resulting coefficient recursion contracts in a weighted norm by a
factor $\rho<1$ independent of $N$.  Finally, the focusing inequality lifts
that contraction from the separation quotient to the original population
chain: the first $k$ steps place every observable in the
translation-invariant subspace without increasing its $L^2$ norm, and each
subsequent step contracts the remaining separation-dependent component.
Thus the population estimate consists of an exact $k$-step reduction
followed by contraction on the one-dimensional quotient.

The following identity is not needed for the mixing estimate, but it provides
a useful consistency check.  If
\[
 \cF_2(\mu)=\int\!\int|\Tr(U^*V)|^4\,d\mu(U)d\mu(V)
\]
is the second frame potential, then orthogonality of the four sectors and the
fact that the initial phase projection preserves Hilbert--Schmidt norm on its
range give, for $t\ge1$,
\begin{equation}\label{eq:frame-potential}
 \cF_2(\nu_{N,t})
 =\|(P_N^+)^{t-1}\|_{\HS}^2
 +\|(P_N^-)^{t-1}\|_{\HS}^2
 +2\|C_N^{t-1}\|_{\HS}^2.
\end{equation}
In particular, Haar measure has frame potential $2$.

\subsection{The coherence blocks vanish exactly}

The following gauge property is special to the periodic Balazs--Voros
quantization.  It lets us replace $B_N$ by a real matrix without changing
the phase-averaged problem.  For a real matrix, the coherence block can then
be recognized as a matrix of $2\times2$ minors of the one-particle kernel.

\begin{lemma}[Real gauge]\label{lem:real-gauge}
There are diagonal unitaries $L_N,R_N$ for which $O_N=L_NB_NR_N$ is real
orthogonal.  Fix $X_0$, and let $\widetilde\nu_{N,t}^{X_0}$ be the law of the
walk obtained by using $O_N$ in place of $B_N$.  Then, for every $m$ and $t$,
there is a fixed unitary $V$ such that
\begin{equation}\label{eq:real-gauge-twirl}
 \cG_{\widetilde\nu_{N,t}^{X_0}}^{(m)}(A)
 =\cG_{\nu_{N,t}^{X_0}}^{(m)}
   \bigl(V^{\otimes m}A(V^*)^{\otimes m}\bigr).
\end{equation}
Thus the two walks have the same $m$-copy twirl up to precomposition by a
fixed unitary conjugation channel.
\end{lemma}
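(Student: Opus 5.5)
The plan has two steps. First, read off a real gauge directly from the explicit entry formula \eqref{eq:baker-entry}. Then show that replacing $B_N$ by $O_N$ changes the walk only by absorbable diagonal phases and a fixed right multiplication.

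\emph{Step 1: the gauge.} Write $a=a_0+hM$ and let $z=b-2a_0$, viewed as an integer in $(-N,N)$. Summing the geometric series in \eqref{eq:baker-entry} gives
\[
 \sum_{r=0}^{M-1}e^{2\pi irz/N}
 =e^{i\pi(M-1)z/N}\,\frac{\sin(\pi Mz/N)}{\sin(\pi z/N)},
\]
where the ratio is replaced by $M$ when $z=0$. The ratio is real. The phase factorizes as $e^{i\pi(M-1)b/N}\,e^{-2\pi i(M-1)a_0/N}$, which is a function of $b$ times a function of $a$, since $a_0$ is determined by $a$. The sign $(-1)^{bh}$ is real. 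So I would set
\[
 L_N=\diag\bigl(e^{-i\pi(M-1)b/N}\bigr)_b,\qquad
 R_N=\diag\bigl(e^{2\pi i(M-1)a_0/N}\bigr)_a .
\]
Then $O_N=L_NB_NR_N$ is real with entries $\pm(NM)^{-1/2}\sin(\pi Mz/N)/\sin(\pi z/N)$. It is also unitary, hence real orthogonal. The only point needing care is that the integer representative of $z$ is the one actually appearing in the sum, so no mod-$N$ ambiguity arises.

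\emph{Step 2: comparing the walks.} Let $\widetilde X_{t+1}=D_{\xi_{t+1}}O_N\widetilde X_t$. Since diagonal matrices commute, $D_\xi L_N=D_{\xi+\lambda}$, where $\lambda$ is the fixed phase vector of $L_N$. Also $R_ND_\xi=D_{\xi+\varrho}$, where $\varrho$ is the phase vector of $R_N$. Regroup the product $D_{\xi_t}L_NB_NR_N\cdots D_{\xi_1}L_NB_NR_N X_0$ by moving each $R_N$ into the next fresh phase on its left. This gives
\[
 \widetilde X_t=D_{\eta_t}B_ND_{\eta_{t-1}}B_N\cdots D_{\eta_1}B_N\,R_NX_0 ,
\]
with $\eta_s=\xi_s+\lambda+\varrho$ for $s<t$ and $\eta_t=\xi_t+\lambda$. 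By translation invariance of the uniform law on $\T^N$, the $\eta_s$ are again independent and uniform. Hence $\widetilde X_t$ has the law of $W_tR_NX_0$, where $W_t$ is the original increment product. The original walk is $X_t=W_tX_0$, so $\widetilde X_t\stackrel{d}{=}X_tV$ with $V=X_0^*R_NX_0$.

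Substituting into \eqref{eq:twirl-def} gives $\widetilde X_t^{\otimes m}A(\widetilde X_t^*)^{\otimes m}\stackrel{d}{=}X_t^{\otimes m}(V^{\otimes m}A(V^*)^{\otimes m})(X_t^*)^{\otimes m}$. Taking expectations yields \eqref{eq:real-gauge-twirl}. The case $t=0$ is trivial with $V=I$, since both walks equal $X_0$.

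The main obstacle is the bookkeeping in Step 2. A naive substitution leaves a stray conjugation by $R_N$ on the \emph{output} side. I expect to resolve this by absorbing every $R_N$ (and every $L_N$) into the adjacent fresh phase, using commutativity of diagonal matrices and translation invariance of the phase law. This moves the only surviving fixed factor to the right end, where it becomes the precomposition by $V$.
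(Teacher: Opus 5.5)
Your proposal is correct and follows essentially the same route as the paper: you read row and column phases off the geometric sum in \eqref{eq:baker-entry}, then absorb every interior diagonal factor into a fresh uniform phase so that only the rightmost $R_N$ survives, giving $V=X_0^*R_NX_0$. The only differences are cosmetic: your phases differ from the paper's $\ell_b=e^{\pi ib/N}(-i)^{b\bmod 2}$, $r_a=e^{-2\pi ia_0/N}$ by the real signs $(-1)^{\lfloor b/2\rfloor}$ and $(-1)^{a_0}$; your unified closed form replaces the paper's case split on the parity of $b-2a_0$; and you make explicit the trivial choice $V=I$ at $t=0$.
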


\begin{proof}
The proof has two parts.  We first choose row and column phases that make
every entry of $B_N$ real.  We then check that inserting those diagonal
phases into the walk changes only a fixed unitary at its initial end.

Write $a=a_0+hM$ as in \eqref{eq:baker-entry} and put
$d=b-2a_0$.  When $d$ is odd, the geometric sum in that formula is
\[
 \sum_{r=0}^{M-1}e^{2\pi ird/N}
 =i e^{-\pi id/N}\csc(\pi d/N),
\]
where the cosecant is real and carries the sign.  Define diagonal matrices
$L_N$ and $R_N$ by multiplying row $b$ and column $a$, respectively, by
\[
 \ell_b=e^{\pi ib/N}(-i)^{b\bmod2},
 \qquad
 r_a=e^{-2\pi ia_0/N}.
\]
For odd $d$, one also has $b\bmod2=1$.  The phase multiplying the geometric
sum is therefore
\[
 e^{\pi i(b-2a_0)/N}(-i)\,i e^{-\pi id/N}=1.
\]
Thus $(L_NB_NR_N)_{ba}$ is real.  If $d=0$, then $b$ is even and the same
conclusion follows directly from the length-$M$ sum.  For every other even
$d$ the sum vanishes.  Hence $O_N=L_NB_NR_N$ is real entry by entry.  It is
also unitary, because it is a product of unitaries, and a real unitary matrix
is orthogonal.

It remains to compare the two walks.  Expanding the walk driven by $O_N$
gives
\[
 \widetilde X_t
 =D_{\xi_t}L_NB_NR_N\cdots
   D_{\xi_1}L_NB_NR_NX_0.
\]
All matrices $D_{\xi_s}$, $L_N$, and $R_N$ are diagonal.  Each interior
product of a fresh uniform diagonal phase with the fixed diagonal factors is
therefore again a fresh uniform diagonal phase; adding fixed angles to
independent uniform angles preserves both uniformity and independence.
After absorbing all such interior factors, only the rightmost $R_N$ remains.
Consequently,
\[
 \widetilde X_t\ \stackrel{d}{=}\ X_tV,
 \qquad V=X_0^*R_NX_0.
\]
Finally, substituting $\widetilde X_t\stackrel d=X_tV$ into
\eqref{eq:twirl-def} gives
\[
 \E\bigl[\widetilde X_t^{\otimes m}A
              (\widetilde X_t^*)^{\otimes m}\bigr]
 =\E\bigl[X_t^{\otimes m}
       \{V^{\otimes m}A(V^*)^{\otimes m}\}
       (X_t^*)^{\otimes m}\bigr],
\]
which is \eqref{eq:real-gauge-twirl}.
\end{proof}

We may consequently work in the real gauge; we continue to write $C_N$ for
the coherence matrix in that gauge.  Fix distinct input indices $a,b$ and
distinct output indices $i,j$, and set
\[
 x=(O_N)_{ia}(O_N)_{jb},\qquad
 y=(O_N)_{ib}(O_N)_{ja},
\]
for the two possible assignments of the input indices to the output indices.
Because $O_N$ is real, the symmetric and antisymmetric amplitudes are
$x+y$ and $x-y$, and the corresponding coherence coefficient is
\[
 (x+y)(x-y)=x^2-y^2.
\]
On the other hand, our convention
$P_N(a,i)=|(O_N)_{ia}|^2=(O_N)_{ia}^2$ gives
\[
 x^2-y^2
 =P_N(a,i)P_N(b,j)-P_N(b,i)P_N(a,j).
\]
This is the $2\times2$ minor of $P_N^{\mathsf T}$ with rows
$\{i,j\}$ and columns $\{a,b\}$.  Since $\Lambda^2(T)$ is represented by
the $2\times2$ minors of $T$, we obtain
\begin{equation}\label{eq:coherence-exterior}
 C_N=\Lambda^2(P_N^{\mathsf T}).
\end{equation}
Exterior powers preserve composition:
$\Lambda^2(T_1T_2)=\Lambda^2(T_1)\Lambda^2(T_2)$.  Therefore
\eqref{eq:Pk-uniform} implies
\begin{equation}\label{eq:coherence-zero}
 C_N^k=\Lambda^2((P_N^{\mathsf T})^k)=\Lambda^2(\Pi)=0.
\end{equation}
This is the simplest of the three goals in
\eqref{eq:two-copy-proof-goals}.  After $k$ steps the one-particle kernel is
the rank-one uniform projection.  Its exterior square is zero because a
rank-one map cannot carry two linearly independent directions.  Thus every
symmetric--antisymmetric coherence disappears exactly at the classical
focusing time; no asymptotic estimate is required.

\section{Translation focusing and separation quotients}\label{sec:focusing}

Our goal in this section is to reduce each population chain from a problem
on order $N^2$ pairs to a problem on order $N$ possible separations.  More
precisely, we will prove that after $k$ steps the backward operator sends
every observable of a pair to an observable that depends only on the
circular separation between the pair's two entries.  This is an exact
dimension reduction; it does not yet prove that the remaining separation
variable has mixed.

Here is the precise phenomenon behind the reduction.  Take two initial pairs
that differ only by a common shift of both entries around the circle.  After
$k$ steps, they give the same expected value for every observable of the
final pair.  The chain has therefore forgotten the initial pair's location.
It has not necessarily forgotten the distance between the two entries,
which is unchanged by a common shift.  We call this first loss of information
\emph{translation focusing}.  The calculation below proves the statement
and identifies pair separation as the coordinate that remains afterward.

For an unordered pair $\alpha=\{i,j\}$, write
$T_x\alpha=\{i+x,j+x\}$, with addition modulo $N$.  Thus $T_x$ moves the
pair around the circle without changing the distance between its two
entries.  The entry formula \eqref{eq:baker-entry}
implies
\begin{equation}\label{eq:pair-covariance}
 P_N^\pm(T_{2x}\alpha,T_x\beta)=P_N^\pm(\alpha,\beta).
\end{equation}
It is useful here to pass from probability distributions to observables.
With the input--output convention of \eqref{eq:population-kernels}, the
backward operator is $Q_N^\pm=(P_N^\pm)^T$; explicitly,
\[
 (Q_N^\pm f)(\beta)=\sum_\alpha P_N^\pm(\alpha,\beta)f(\alpha).
\]
Because the translations form the cyclic group $\Z_N$, every observable
decomposes orthogonally into the Fourier-mode spaces
\[
 \cH_p=\{f:f(T_x\alpha)=e^{2\pi ipx/N}f(\alpha)\},
 \qquad p\in\Z_N.
\]
Thus an observable in $\cH_p$ acquires the phase
$e^{2\pi ipx/N}$ when both entries of its argument are shifted by $x$.
Equation \eqref{eq:pair-covariance} gives
\begin{equation}\label{eq:frequency-doubling}
 Q_N^\pm\cH_p\subseteq\cH_{2p}.
\end{equation}
Indeed, if $f\in\cH_p$, then the change of variables
$\alpha=T_{2x}\gamma$ and \eqref{eq:pair-covariance} give
\[
 \begin{aligned}
 (Q_N^\pm f)(T_x\beta)
 &=\sum_\gamma P_N^\pm(T_{2x}\gamma,T_x\beta)
       f(T_{2x}\gamma)\\
 &=e^{2\pi i(2p)x/N}(Q_N^\pm f)(\beta).
 \end{aligned}
\]
This is the defining transformation rule for $\cH_{2p}$.
Iterating gives
\[
 (Q_N^\pm)^j\cH_p\subseteq\cH_{2^jp}.
\]
Since $N=2^k$, one has $2^kp=0$ in $\Z_N$ for every $p$.  Every mode is
therefore focused into the translation-invariant space $\cH_0$ after $k$
steps.

This completes the exact dimension reduction.  To prove population mixing,
it remains to show that the reduced chain forgets the initial separation in
another $O(\log N)$ steps.  Sections~\ref{sec:recursions} and
\ref{sec:weighted} do so through explicit cosine recursions and a weighted
contraction estimate.

The functions in $\cH_0$ depend only on circular separation
\[
 d(i,j)=\min\{|i-j|,N-|i-j|\}\in\{0,\ldots,M\}.
\]
Indeed, two unordered pairs lie in the same orbit under simultaneous
translation exactly when they have the same separation.  Identifying all
pairs in each such orbit therefore produces the \emph{separation quotient}.
Its state space is $\{1,\ldots,M\}$ in the antisymmetric sector and
$\{0,\ldots,M\}$ in the symmetric sector.  Under the identification
\[
 f(\{i,j\})=g(d(i,j)),
\]
the restriction of $Q_N^\pm$ to $\cH_0$ becomes a Markov operator on the
corresponding separation state space.  We call these restricted operators
the antisymmetric and symmetric separation-quotient operators, and continue
to denote them by $Q_N^-$ and $Q_N^+$, respectively.  This is the promised
reduction from order $N^2$ pair states to order $N$ separation states.

The uniform law on antisymmetric pairs induces
\begin{equation}\label{eq:pi-minus}
 \pi_-(d)=\begin{cases}2/(N-1),&d<M,\\1/(N-1),&d=M.\end{cases}
\end{equation}
For $d<M$, each separation class contains $N$ pairs, whereas the antipodal
class $d=M$ contains only $N/2$; this accounts for the exceptional last
weight.  In the symmetric sector the diagonal class $d=0$ is also present.
The induced law on $0,\ldots,M$ is
\begin{equation}\label{eq:pi-plus}
 \pi_+(d)=\begin{cases}2/(N+1),&d<M,\\1/(N+1),&d=M.\end{cases}
\end{equation}
In either sector, $\pi_\pm$ is the pushforward of the uniform measure on
pairs under the separation map: explicitly,
\[
 \pi_\pm(d)
 =\frac{\#\{\text{pairs in the $\pm$ sector with separation $d$}\}}
        {\#\{\text{pairs in the $\pm$ sector}\}}.
\]
We call this the \emph{class-size measure}, because the weight assigned to a
separation is proportional to the number of pairs in that separation class.

\begin{lemma}[Focusing inequality]\label{lem:focusing}
For every $s\ge0$,
\begin{equation}\label{eq:focusing-bound}
 \|(P_N^\pm)^{k+s}-\Pi_\pm\|_{2\to2}
 \le
 \|((Q_N^\pm)|_{\cH_0})^s-\Pi_\pm\|_{2\to2},
\end{equation}
where the norm on the full pair space uses the uniform measure and the norm
on $\cH_0$ uses the corresponding class-size measure $\pi_\pm$ defined in
\eqref{eq:pi-minus}--\eqref{eq:pi-plus}.
\end{lemma}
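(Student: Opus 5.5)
We work with the backward operator $Q_N^\pm=(P_N^\pm)^{\mathsf T}$. Operator norms are invariant under transposition when the reference measure is uniform, because $P_N^\pm$ is bistochastic. Hence $\|(P_N^\pm)^{k+s}-\Pi_\pm\|_{2\to2}=\|(Q_N^\pm)^{k+s}-\Pi_\pm\|_{2\to2}$ on $L^2$ of the uniform measure on pairs, and $\Pi_\pm$ is self-adjoint there. We then factor
\[
 (Q_N^\pm)^{k+s}-\Pi_\pm
 =\bigl((Q_N^\pm)^s-\Pi_\pm\bigr)(Q_N^\pm)^k,
\]
which is valid because $\Pi_\pm Q_N^\pm=\Pi_\pm$ (column sums are one, so the mean is preserved). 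The plan is to control the factor $(Q_N^\pm)^k$ by a contraction into $\cH_0$, and the factor $(Q_N^\pm)^s-\Pi_\pm$ on $\cH_0$ by the quotient operator.

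\emph{Step 1: $(Q_N^\pm)^k$ maps into $\cH_0$ with norm at most one.} By \eqref{eq:frequency-doubling}, $(Q_N^\pm)^k\cH_p\subseteq\cH_{2^kp}=\cH_0$ for every $p$. Since the spaces $\cH_p$ span the whole space, the range of $(Q_N^\pm)^k$ lies in $\cH_0$. A bistochastic Markov operator is an $L^2$ contraction for the uniform measure, by Jensen or Schur's test with both row and column sums equal to one. Therefore $\|(Q_N^\pm)^kf\|_2\le\|f\|_2$.

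\emph{Step 2: identify the restriction to $\cH_0$ isometrically with the quotient.} If $f(\{i,j\})=g(d(i,j))$, then
\[
 \|f\|_{L^2(\mathrm{unif})}^2=\sum_d\pi_\pm(d)|g(d)|^2=\|g\|_{L^2(\pi_\pm)}^2,
\]
since $\pi_\pm$ is exactly the pushforward of the uniform measure under the separation map. The subspace $\cH_0$ is invariant under $Q_N^\pm$ by \eqref{eq:frequency-doubling} with $p=0$. Also, $\Pi_\pm$ restricted to $\cH_0$ is the $\pi_\pm$-mean projection, because pushforward preserves integrals. Hence, for any $h\in\cH_0$,
\[
 \|((Q_N^\pm)^s-\Pi_\pm)h\|_2\le\|((Q_N^\pm)|_{\cH_0})^s-\Pi_\pm\|_{2\to2}\,\|h\|_2 .
\]

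\emph{Step 3: combine.} Take $h=(Q_N^\pm)^kf$. Then $\|h\|_2\le\|f\|_2$, and Steps 1 and 2 together give \eqref{eq:focusing-bound}.

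The only point needing care is the transpose and adjoint bookkeeping. We must check that the norm of $(P_N^\pm)^{k+s}-\Pi_\pm$ equals that of its transpose. This holds because, for the uniform measure, the $L^2$ adjoint of a real matrix is its transpose, and $\Pi_\pm$ is symmetric. We must also check that the case $s=0$ is consistent: the right side of \eqref{eq:focusing-bound} is then $\|I-\Pi_\pm\|\le1$, which is fine. I expect no serious obstacle here. The substantive work, contraction of the quotient operator, is deferred to later sections.
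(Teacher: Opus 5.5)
Your proof is correct and follows the paper's argument: the same factorization $(Q_N^\pm)^{k+s}-\Pi_\pm=\bigl((Q_N^\pm)^s-\Pi_\pm\bigr)(Q_N^\pm)^k$, the fact that frequency doubling puts the range of $(Q_N^\pm)^k$ inside $\cH_0$, the $L^2$ contraction property of a bistochastic operator, transpose invariance of the norm, and the pushforward identification of $\cH_0$ with $L^2(\pi_\pm)$. One small correction: transposition invariance of the operator norm needs only that the reference measure is uniform (so the norm is a scaled Euclidean norm), not that $P_N^\pm$ is bistochastic.
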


\begin{proof}
The proof separates the first $k$ focusing steps from the following $s$
mixing steps.  We use three facts: $(Q_N^\pm)^k$ has range in $\cH_0$,
$Q_N^\pm$ is an $L^2$ contraction, and transposition does not change
singular values.

First, the orthogonal Fourier decomposition into the spaces $\cH_p$, together
with \eqref{eq:frequency-doubling}, shows that
\[
 (Q_N^\pm)^k\cH_p\subseteq\cH_{2^kp}=\cH_0
 \qquad\text{for every }p\in\Z_N.
\]
Hence the entire range of $(Q_N^\pm)^k$ is contained in $\cH_0$.

Second, $Q_N^\pm$ is bistochastic: it fixes constants and preserves the
uniform mean.  Hence the projection onto constants satisfies
$\Pi_\pm Q_N^\pm=Q_N^\pm\Pi_\pm=\Pi_\pm$.  It follows that
\[
 (Q_N^\pm)^{k+s}-\Pi_\pm
 =\bigl((Q_N^\pm)^s-\Pi_\pm\bigr)(Q_N^\pm)^k.
\]
The first factor on the right is applied only to vectors in $\cH_0$, while
the second has norm at most one because a bistochastic Markov operator is an
$L^2$ contraction under the uniform law.  Taking norms therefore gives
\[
 \|(Q_N^\pm)^{k+s}-\Pi_\pm\|_{2\to2}
 \le
 \|((Q_N^\pm)|_{\cH_0})^s-\Pi_\pm\|_{2\to2}.
\]

Finally, $Q_N^\pm=(P_N^\pm)^{\mathsf T}$, and a matrix and its transpose
have the same singular values.  The same inequality thus holds for
$P_N^\pm$, proving \eqref{eq:focusing-bound}.  Under the identification of
$\cH_0$ with functions of separation, the uniform pair measure pushes
forward to $\pi_\pm$, so the norm on the right is precisely the one stated
in the lemma.
\end{proof}

\section{Exact Fourier recursions}\label{sec:recursions}

Our goal in this section is to describe explicitly how the two
separation-quotient operators act on mean-zero observables.  We begin with
the cosine functions
\[
 c_m(d)=\cos(2\pi md/N),\qquad 1\le m\le M,
\]
of the separation $d$.  In each sector we subtract the $\pi_\pm$-mean of
$c_m$; the resulting \emph{centered cosine} $g_m$ is orthogonal to the
constant functions.  These centered cosines span the mean-zero observables
on the corresponding separation space.  Propositions
\ref{prop:anti-recursion} and \ref{prop:sym-recursion} compute each
$Q_N^\pm g_m$ exactly as a linear combination of the centered cosines.  In
this concrete sense they give a centered-cosine representation of the two
operators.  Section~\ref{sec:weighted} will bound the resulting recursions
for their coefficients.

The form of the recursions is more important than their individual
coefficients.  A mode of frequency $m$ has a leading image at the doubled
and folded frequency $2m$, reflecting the underlying baker dynamics.  Lower
frequencies are also created through a Fej\'{e}r-kernel correction.  In the
antisymmetric sector this correction enters with a minus sign and supplies
useful cancellation.  In the symmetric sector it enters with a plus sign and
there is an additional diagonal-pair term.  The weighted norm in the next
section is designed to control all three effects simultaneously.

We next introduce the matrix that makes these recursions computable.  To
evaluate $Q_N^\pm c_m$ at an input pair $\{a,b\}$, we expand the pair
transition probabilities defined in \eqref{eq:population-kernels}.  For
distinct input and output pairs, their amplitudes are
\begin{align*}
 (S_N)_{\{i,j\},\{a,b\}}
   &=(B_N)_{ia}(B_N)_{jb}+(B_N)_{ib}(B_N)_{ja},\\
 (A_N)_{\{i,j\},\{a,b\}}
   &=(B_N)_{ia}(B_N)_{jb}-(B_N)_{ib}(B_N)_{ja}.
\end{align*}
Consequently,
\begin{align*}
 P_N^\pm(\{i,j\},\{a,b\})
 ={}& |(B_N)_{ia}(B_N)_{jb}|^2
      +|(B_N)_{ib}(B_N)_{ja}|^2\\
 &\quad\mathbin{\pm}2\operatorname{Re}\!\left[
 (B_N)_{ia}(B_N)_{jb}
 \overline{(B_N)_{ib}(B_N)_{ja}}\right].
\end{align*}
The first two summands are called the \emph{direct terms}: they correspond
to the two possible assignments of the input labels $a,b$ to the output
labels $i,j$.  The final cross term is called the \emph{exchange term}; it
measures interference between those assignments.  Its sign distinguishes
the symmetric and antisymmetric sectors.

To compute $Q_N^\pm c_m$, we multiply this expression by $c_m(j-i)$ and sum
over output pairs $\{i,j\}$.  The resulting sums can be expressed through
the entries of
\begin{equation}\label{eq:Rm-def}
 Z_m=\diag(e^{2\pi imx/N})_{x\in\Z_N},\qquad
 R_m=B_N^*Z_mB_N,\qquad 1\le m\le M.
\end{equation}
Indeed,
\[
 (R_m)_{ab}
 =\sum_{x\in\Z_N}\overline{(B_N)_{xa}}
      e^{2\pi imx/N}(B_N)_{xb}.
\]
There is a small bookkeeping point because the present sum runs over
distinct unordered output pairs $i<j$, whereas products of entries of $R_m$
naturally sum over all ordered pairs.  For distinct inputs $a,b$, put
\[
 H_{ab}=\sum_i |(B_N)_{ia}|^2|(B_N)_{ib}|^2.
\]
If $D_m(a,b)$ denotes the sum of the two direct terms against
$c_m(j-i)$ over $i<j$, and $E_m(a,b)$ denotes the corresponding exchange
sum with a positive sign, then expansion of the matrix products gives
\begin{align}
 D_m(a,b)
 &=\operatorname{Re}\{(R_m)_{aa}\overline{(R_m)_{bb}}\}-H_{ab},
 \label{eq:direct-Rm}\\
 E_m(a,b)
 &=\frac{|(R_m)_{ab}|^2+|(R_m)_{ba}|^2}{2}-H_{ab}.
 \label{eq:exchange-Rm}
\end{align}
The terms $H_{ab}$ remove the omitted diagonal outputs $i=j$.
Lemma~\ref{lem:Rm-elements} shows that the two squared moduli in
\eqref{eq:exchange-Rm} are equal and computes all the remaining quantities.
In the antisymmetric sector the two $H_{ab}$ corrections cancel between
$D_m-E_m$.  In the symmetric sector, the diagonal output states contribute
$2H_{ab}$, which cancels the correction in $D_m+E_m$.  Thus the diagonal
entries of $R_m$ produce the leading cosine of doubled frequency, while its
off-diagonal squared modulus produces the lower-frequency correction.

For later use, set
\begin{equation}\label{eq:Fejer-def}
 F_{m,M}(d)=\frac1{M^2}
 \left|\sum_{r=0}^{m-1}e^{2\pi ird/M}\right|^2.
\end{equation}
This is a finite Fej\'{e}r kernel.  Its cosine expansion will make the
lower-frequency correction completely explicit.

\begin{lemma}[Fourier-block matrix elements]\label{lem:Rm-elements}
For every $a\in\Z_N$ and every nonzero $d\in\Z_N$,
\begin{align}
 (R_m)_{aa}&=(1-m/M)e^{4\pi ima/N},\label{eq:Rm-diagonal}\\
 |(R_m)_{a,a+d}|^2
 &=\frac1{M^2}\left|\sum_{r=0}^{m-1}e^{2\pi ird/M}\right|^2
 =F_{m,M}(d).\label{eq:Rm-offdiagonal}
\end{align}
Here the second coordinate is interpreted modulo $N$.
\end{lemma}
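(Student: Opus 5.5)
The diagonal formula \eqref{eq:Rm-diagonal} needs no new computation. It is the diagonal Fourier evolution \eqref{eq:diagonal-fourier-evolution} already proved in Proposition~\ref{prop:design-lower}: $(R_m)_{aa}=(B_N^*Z_mB_N)_{aa}=\widehat q_N(m)e^{2\pi i m(2a)/N}$. Lemma~\ref{lem:tent} gives $\widehat q_N(m)=1-m/M$ for $1\le m\le M$. So the work is entirely in the off-diagonal modulus, and I would compute it directly from the explicit entry formula \eqref{eq:baker-entry}.

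Write $a=a_0+hM$ and $b=b_0+h'M$, and use
\[
(B_N)_{xa}=\frac{(-1)^{xh}}{\sqrt{NM}}\sum_{r=0}^{M-1}e^{2\pi i r(x-2a_0)/N}.
\]
Substituting into
\[
(R_m)_{ab}=\sum_x\overline{(B_N)_{xa}}\,e^{2\pi i m x/N}(B_N)_{xb}
\]
and summing over $x\in\Z_N$ first, character orthogonality forces the relation $r'-r+m+M(h'-h)\equiv0\pmod N$ between the two summation indices $r,r'\in\{0,\dots,M-1\}$. Here the sign $(-1)^{x(h'-h)}$ has been written as $e^{2\pi i xM(h'-h)/N}$. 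The resulting expression is a single sum over admissible pairs, carrying the phase factor $e^{2\pi i(2ra_0-2r'b_0)/N}$, multiplied by $N/(NM)=1/M$.

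The main step is the case analysis on $h'-h$. If $h=h'$, the constraint is $r'=r-m$, which has admissible $r\in\{m,\dots,M-1\}$. If $h\ne h'$, the constraint is $r'=r-m\pm M$, which has admissible $r\in\{0,\dots,m-1\}$, i.e. $r'=r+M-m$. In either case we get a geometric sum with ratio $e^{2\pi i\cdot 2(a_0-b_0)/N}=e^{2\pi i(a_0-b_0)/M}$ times a unimodular prefactor.

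Taking the modulus discards the prefactor. The number of terms is $M-m$ or $m$, and the geometric sum of length $\ell$ has the same modulus whether it starts at $0$ or at $m$. This gives
\[
|(R_m)_{ab}|^2=\frac1{M^2}\Bigl|\sum_{r=0}^{\ell-1}e^{2\pi i r(a_0-b_0)/M}\Bigr|^2,
\]
with $\ell=m$ or $\ell=M-m$.

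The obstacle is reconciling this with the claimed $F_{m,M}(d)$ for every nonzero $d$, including $d$ with $h=h'$. I would check it as follows, with $d=b-a$, so that $a_0-b_0\equiv -d\pmod M$ and the Fejér kernel is even in $d$.

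When $d\not\equiv0\pmod M$, the full sum of length $M$ vanishes. Hence the tail sum over $r=m,\dots,M-1$ is minus the head sum over $r=0,\dots,m-1$, so the length-$(M-m)$ and length-$m$ sums have equal modulus. When $d\equiv0\pmod M$ with $d\ne0$, we must have $d=M$, so $h\ne h'$ and the length is automatically $m$, giving $m^2/M^2=F_{m,M}(M)$. Both cases therefore reduce to \eqref{eq:Rm-offdiagonal}, and the dependence on $a$ drops out.

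I expect the index bookkeeping in the orthogonality step, in particular the signs $(-1)^{xh}$ and the wraparound modulo $N$, to be where errors are most likely. I would double-check it against the $m=0$ case, where $R_0=I$ and the off-diagonal entries must vanish.
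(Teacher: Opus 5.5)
Your proof is correct and is essentially the paper's argument. Summing over $x$ first is the paper's conjugation $F_NZ_mF_N^*=S_m$ written in coordinates, and your same-block and cross-block constraints produce the paper's overlap intervals of lengths $M-m$ and $m$. The off-diagonal step also matches: the full sum of $M$-th roots of unity vanishes for $d\not\equiv0\pmod M$, and $d=M$ is handled separately. The one difference is that you take the diagonal entry from \eqref{eq:diagonal-fourier-evolution} and Lemma~\ref{lem:tent} instead of recounting the $M-m$ equal-phase summands; this is valid and not circular, since that identity is proved earlier and independently.
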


\begin{proof}
We compute $R_m$ in three steps.  First, conjugating $Z_m$ by the full
Fourier transform turns multiplication by a Fourier character into a shift.
Indeed, put $C=\diag(F_M,F_M)$, so that $B_N=F_N^*C$.  With the Fourier
convention fixed in Section~\ref{sec:classical},
\[
 F_NZ_mF_N^*=S_m,
\]
where $S_m$ cyclically shifts the standard basis by $m$.  Therefore
\[
 R_m=C^*S_mC.
\]

Second, we identify the sums appearing in this product.  The two diagonal
blocks of $C$ divide the coordinates into two intervals of length $M$.
Because $1\le m\le M$, shifting either interval by $m$ leaves an overlap of
length $M-m$ with the same block and moves an interval of length $m$ into
the other block.  Substituting the entries of $F_M$ into $C^*S_mC$ therefore
expresses every entry of $R_m$ as $M^{-1}$ times a geometric sum over one of
these two intervals.

Third, we evaluate the diagonal and off-diagonal cases.  For a diagonal
entry, the $M-m$ summands have a common phase.  Their number supplies the
factor $1-m/M$, and their common phase is $e^{4\pi ima/N}$, which proves
\eqref{eq:Rm-diagonal}.  For an off-diagonal entry with displacement $d$, the
two overlap intervals are complementary inside a complete sum of $M$ roots
of unity.  If $d\not\equiv0\pmod M$, that complete sum vanishes, so the
length-$(M-m)$ sum is, up to a phase, the negative of the complementary
length-$m$ sum.  The phase disappears upon taking absolute values, leaving
\[
 M^{-1}\left|\sum_{r=0}^{m-1}e^{2\pi ird/M}\right|,
\]
for $|(R_m)_{a,a+d}|$.

The only remaining nonzero displacement in $\Z_N$ is the antipodal case
$d=M$.  Then the two columns have the same within-block Fourier coordinate
but lie in opposite blocks of $C$.  Their cross-block overlap in
$C^*S_mC$ has length $m$, and all its summands have the same phase.  Hence
$|(R_m)_{a,a+M}|=m/M$.  This agrees with the right-hand side of
\eqref{eq:Rm-offdiagonal}, since every term in its geometric sum equals one
when $d=M$.  Squaring completes the proof.
\end{proof}

\subsection{Antisymmetric quotient}

For $1\le m\le M$, center the cosine modes with respect to $\pi_-$ by
\[
 g_m=c_m+\frac1{N-1},\qquad 1\le d\le M.
\]
The quotient chain mixes constants trivially, so we work on its mean-zero
subspace.  The added constant makes $g_m$ centered in $L^2(\pi_-)$.  Because
cosines do not distinguish $m$ from $N-m$, frequencies above $M$ are always
folded back into $\{0,\ldots,M\}$.  Put
$\alpha_m=(1-m/M)^2$.

\begin{proposition}[Antisymmetric recursion]\label{prop:anti-recursion}
With frequencies folded modulo $N$ by $g_j=g_{N-j}$ and $g_0=0$,
\begin{align}
 Q_N^-c_m&=\alpha_mc_{2m}-F_{m,M},\label{eq:anti-uncentered}\\
 Q_N^-g_m&=\alpha_mg_{2m}
 -\frac2{M^2}\sum_{\ell=1}^{m-1}(m-\ell)g_{2\ell}.
 \label{eq:anti-centered}
\end{align}
\end{proposition}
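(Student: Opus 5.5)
The plan is to evaluate $Q_N^-c_m$ pointwise at an antisymmetric input pair $\{a,b\}$ with $b=a+d$, $d\ne0$. I will use the direct/exchange decomposition above, then substitute Lemma~\ref{lem:Rm-elements}, and finally centre by a mean-preservation argument rather than by tracking constants explicitly.

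\textbf{Uncentred recursion.} By construction, $(Q_N^-c_m)(\{a,b\})=D_m(a,b)-E_m(a,b)$. By \eqref{eq:direct-Rm}--\eqref{eq:exchange-Rm}, the two $H_{ab}$ corrections cancel, leaving
\[
 (Q_N^-c_m)(\{a,b\})
 =\operatorname{Re}\{(R_m)_{aa}\overline{(R_m)_{bb}}\}
 -\tfrac12\bigl(|(R_m)_{ab}|^2+|(R_m)_{ba}|^2\bigr).
\]
\eqref{eq:Rm-diagonal} gives $(R_m)_{aa}\overline{(R_m)_{bb}}=\alpha_m e^{4\pi im(a-b)/N}$, whose real part is $\alpha_m\cos(2\pi(2m)d/N)=\alpha_m c_{2m}(d)$. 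This depends only on the separation, because cosine is even and $c_{2m}$ is folded via $c_j=c_{N-j}$. For the exchange part, \eqref{eq:Rm-offdiagonal} gives $|(R_m)_{a,a+d}|^2=F_{m,M}(d)$ and $|(R_m)_{b,b-d}|^2=F_{m,M}(-d)$. Since $|\sum_r e^{2\pi ird/M}|=|\sum_r e^{-2\pi ird/M}|$, we have $F_{m,M}(-d)=F_{m,M}(d)$, so the two squared moduli agree. Their average is $F_{m,M}(d)$, again a function of the circular separation. This proves \eqref{eq:anti-uncentered}.

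\textbf{Centred recursion.} I will expand the Fej\'er kernel in cosines:
\[
 F_{m,M}(d)=\frac1{M^2}\Bigl[m+2\sum_{\ell=1}^{m-1}(m-\ell)\cos(2\pi\ell d/M)\Bigr],
\qquad \cos(2\pi\ell d/M)=c_{2\ell}(d).
\]
Substituting into \eqref{eq:anti-uncentered}, and using that $Q_N^-$ fixes constants (it is Markov), gives
\[
 Q_N^-g_m=\alpha_mc_{2m}-\frac2{M^2}\sum_{\ell=1}^{m-1}(m-\ell)c_{2\ell}+\kappa
\]
for some explicit constant $\kappa$. Now rewrite each $c_j$ as $g_j-\tfrac1{N-1}$ (with folding, and with $g_0=0$). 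The right side then becomes the displayed combination of centred modes plus a constant $\kappa'$.

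To show $\kappa'=0$ without bookkeeping, I will argue by means. The quotient $Q_N^-$ preserves $\pi_-$-means, because $P_N^-$ is bistochastic and $\pi_-$ is the pushforward of the uniform pair measure. Hence $Q_N^-g_m$ has $\pi_-$-mean zero. Every $g_j$ on the right also has $\pi_-$-mean zero. Therefore $\kappa'$, being the $\pi_-$-mean of the right side, vanishes.

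\textbf{Main obstacle.} The point needing care is the edge cases of folding, which the mean argument must cover. When $m=M$ we have $c_{2M}=c_N\equiv1$, but $\alpha_M=0$, so this term disappears and the convention $g_0=0$ is consistent. The frequencies $2\ell$ with $\ell<m\le M$ stay below $N$ but may exceed $M$; these are handled by $g_j=g_{N-j}$. I also need to check that each folded $g_j$ with $1\le j\le M$ is genuinely $\pi_-$-centred. That is the defining choice of the constant $1/(N-1)$, verified once from $\sum_{d\ne0}\cos(2\pi jd/N)=-1$ over the $N-1$ nonzero differences. I will run this check on ordered differences and then push forward to separations, which avoids the antipodal weight in \eqref{eq:pi-minus}.
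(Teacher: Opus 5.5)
Your proposal is correct and follows essentially the paper's proof. You use the direct--exchange expansion with the two $H_{ab}$ corrections cancelling, Lemma~\ref{lem:Rm-elements} for the doubled-frequency and Fej\'{e}r terms, and then the finite Fej\'{e}r cosine expansion. The only difference is that you remove the leftover constant by $\pi_-$-mean preservation (bistochasticity of $P_N^-$ together with centering of every folded $g_j$), whereas the paper verifies the explicit identity $1-\alpha_m+m(m-1)/M^2=m(N-1)/M^2$; your mean argument is the device the paper itself uses in the symmetric sector.
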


\begin{proof}
We first derive the uncentered formula and then rewrite it in the centered
basis.  Use the direct--exchange expansion of $P_N^-$ given above.  For an
input pair $\{a,b\}$, equations \eqref{eq:direct-Rm} and
\eqref{eq:exchange-Rm} give
\[
 D_m(a,b)=\alpha_mc_{2m}(b-a)-H_{ab},
 \qquad
 E_m(a,b)=F_{m,M}(b-a)-H_{ab}.
\]
Here \eqref{eq:Rm-diagonal} produces both the coefficient
$\alpha_m=(1-m/M)^2$ and the doubled frequency $2m$.  The exchange term has
the antisymmetric minus sign, so the total contribution is $D_m-E_m$; the
two $H_{ab}$ terms cancel.  Since the input
separation is $d=b-a$ up to the harmless cosine symmetry $d\leftrightarrow
N-d$, we obtain
\[
 Q_N^-c_m=\alpha_mc_{2m}-F_{m,M},
\]
which is \eqref{eq:anti-uncentered}.

It remains to center this identity.  The finite Fej\'{e}r expansion is
\[
 F_{m,M}(d)=\frac{m}{M^2}
 +\frac2{M^2}\sum_{\ell=1}^{m-1}(m-\ell)
          \cos(2\pi\ell d/M)
\]
and $\cos(2\pi\ell d/M)=c_{2\ell}(d)$.  Because
$g_j=c_j+1/(N-1)$ and $Q_N^-$ fixes constants, substitution into the
uncentered formula expresses every nonconstant term in the $g$-basis.  The
remaining scalar terms cancel by
\[
 1-\alpha_m+\frac{m(m-1)}{M^2}
 =\frac{m(N-1)}{M^2}.
\]
The result is exactly \eqref{eq:anti-centered}.  If a doubled frequency
exceeds $M$, the identity $c_j=c_{N-j}$ folds it back as stated in the
proposition.
\end{proof}

The centered cosines have the single linear dependence
\begin{equation}\label{eq:anti-dependence}
 2\sum_{m=1}^{M-1}g_m+g_M=0
\end{equation}
and Gram matrix, meaning the matrix of their pairwise inner products,
\begin{equation}\label{eq:anti-gram}
 \langle g_m,g_n\rangle_{\pi_-}
 =\frac{M(1+\one_{m=M})}{2M-1}\one_{m=n}
 -\frac{2M}{(2M-1)^2}.
\end{equation}
Both follow from discrete cosine orthogonality after restoring the missing
endpoint $d=0$.  Although the centered cosines are not mutually orthogonal,
equations \eqref{eq:anti-dependence}--\eqref{eq:anti-gram} show that they
form a tight frame for the mean-zero antisymmetric separation space.
Consequently, bounds on the individual images $Tg_m$ can be combined into a
Hilbert--Schmidt, and hence operator-norm, bound for $T$.  The exact frame
identity is used in Section~\ref{sec:weighted}.

\subsection{Symmetric quotient}

For $0\le d\le M$, put
\[
 g_m(d)=\cos(2\pi md/N)-\frac1{N+1},\qquad1\le m\le M.
\]

\begin{proposition}[Symmetric recursion]\label{prop:sym-recursion}
The centered symmetric separation operator satisfies
\begin{align}
 Q_N^+g_m={}&(1-m/M)^2g_{2m}
 +\frac2{M^2}\sum_{\ell=1}^{m-1}(m-\ell)g_{2\ell}\notag\\
 &-\frac{m^2}{M^3}\sum_{j=1}^{M-1}g_j
 -\frac{m^2}{2M^3}g_M.\label{eq:sym-centered}
\end{align}
The $M$ functions $g_1,\ldots,g_M$ form a basis of the mean-zero quotient
space, and
\begin{equation}\label{eq:sym-gram}
 \langle g_m,g_n\rangle_{\pi_+}
 =\frac{M(1+\one_{m=M})}{2M+1}\one_{m=n}
 +\frac{2M}{(2M+1)^2}.
\end{equation}
\end{proposition}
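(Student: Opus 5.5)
The plan mirrors the antisymmetric case (Proposition~\ref{prop:anti-recursion}). We first derive an uncentered formula for $Q_N^+c_m$ on the symmetric separation space $\{0,\ldots,M\}$, and then rewrite it in the centered basis. The new feature is the diagonal pair class $d=0$, on both the input and the output side.

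\emph{Off-diagonal input pairs.} For an input pair $\{a,b\}$ with $a\ne b$, the output sum runs over $i<j$ and also over the diagonal outputs $\{i,i\}$. For a diagonal output, $s_{ii}=e_i\otimes e_i$ and the amplitude is $\sqrt2(B_N)_{ia}(B_N)_{ib}$. Its squared modulus is $2|(B_N)_{ia}|^2|(B_N)_{ib}|^2$, and it carries weight $c_m(0)=1$, so these outputs contribute exactly $2H_{ab}$. Combined with $D_m+E_m$ from \eqref{eq:direct-Rm}--\eqref{eq:exchange-Rm} and Lemma~\ref{lem:Rm-elements}, this gives
\[
 Q_N^+c_m(d)=\alpha_mc_{2m}(d)+F_{m,M}(d),\qquad d\ne0.
\]

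\emph{Diagonal input $\{a,a\}$.} Here $B_N^{\otimes2}s_{aa}$ has amplitude $\sqrt2(B_N)_{ia}(B_N)_{ja}$ on $s_{ij}$ for $i<j$, and $(B_N)_{ia}^2$ on $s_{ii}$. Summing the squared moduli against the cosine gives $|(R_m)_{aa}|^2=\alpha_m$, since the ordered double sum $\sum_{i,j}P_N(a,i)P_N(a,j)c_m(j-i)$ counts every $i<j$ term twice. On the other hand, $\alpha_mc_{2m}(0)+F_{m,M}(0)=\alpha_m+m^2/M^2$. So at $d=0$ the uniform formula overshoots by $m^2/M^2$, and
\[
 Q_N^+c_m=\alpha_mc_{2m}+F_{m,M}-\frac{m^2}{M^2}\one_{\{0\}}.
\]
This indicator is the ``additional diagonal-pair term'' mentioned earlier. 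I will double-check it against the fact that $Q_N^+$ fixes constants and preserves the $\pi_+$-mean.

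\emph{Centering.} Expand $F_{m,M}$ with the Fej\'er formula, $\frac m{M^2}+\frac2{M^2}\sum_{\ell<m}(m-\ell)c_{2\ell}$, and expand $\one_{\{0\}}$ in cosines by discrete cosine inversion on $\{0,\ldots,M\}$:
\[
 \one_{\{0\}}=\frac1M\Bigl(\frac12+\sum_{j=1}^{M-1}c_j+\frac12c_M\Bigr).
\]
Then
\[
 -\frac{m^2}{M^2}\one_{\{0\}}=-\frac{m^2}{M^3}\sum_{j=1}^{M-1}c_j-\frac{m^2}{2M^3}c_M+\text{const}.
\]
Substituting $c_j=g_j+1/(N+1)$ and using that $Q_N^+$ fixes constants, all constant terms must cancel, because both sides are $\pi_+$-centered. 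I will verify the scalar identity directly as a check; this yields \eqref{eq:sym-centered}, with doubled frequencies above $M$ folded back.

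\emph{Basis and Gram matrix.} Compute $\langle g_m,g_n\rangle_{\pi_+}=\langle c_m,c_n\rangle_{\pi_+}-(N+1)^{-2}$. This uses $\langle c_m,\one\rangle_{\pi_+}=1/(N+1)$: the full sum $\sum_{x\in\Z_N}\cos(2\pi mx/N)$ vanishes and the diagonal class $d=0$ supplies the remainder. It also uses
\[
 \langle c_m,c_n\rangle_{\pi_+}=\frac{1}{N+1}\Bigl(\sum_{x\in\Z_N}c_m(x)c_n(x)+1\Bigr)=\frac{1}{N+1}\Bigl(\frac N2(1+\one_{m=M})\one_{m=n}+1\Bigr).
\]
Simplifying gives \eqref{eq:sym-gram}. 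The Gram matrix is a positive diagonal matrix plus a positive rank-one term, hence positive definite. So the $M$ functions $g_m$ are linearly independent in a mean-zero space of dimension $M$, and they form a basis.

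The main obstacle is the bookkeeping at $d=0$: normalizations of $s_{ii}$, the ordered-versus-unordered counting, and the endpoint weights in the cosine inversion. I would test the result numerically at $N=4$ before finalizing.
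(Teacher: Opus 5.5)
Your proposal is correct and follows essentially the same route as the paper. It obtains the distinct-input formula from the direct and exchange sums plus the $2H_{ab}$ contribution of the diagonal outputs, adds the diagonal-input correction $-(m/M)^2\one_{d=0}$ expanded by the DCT-I identity, cancels the constants by mean preservation, and uses cosine orthogonality for the Gram matrix and the basis property. Your explicit evaluation $(Q_N^+c_m)(0)=|(R_m)_{aa}|^2$ and the planned scalar check of the centering constants, which do vanish, fill in details that the paper only asserts.
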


\begin{proof}
There are three points to verify: the contribution of pairs with distinct
entries, the additional contribution of diagonal pairs, and the basis
properties of the centered cosines.

For a distinct input pair $\{a,b\}$, equations \eqref{eq:direct-Rm} and
\eqref{eq:exchange-Rm} show that the contribution from distinct output pairs
is
\[
 D_m(a,b)+E_m(a,b)
 =\alpha_mc_{2m}(b-a)+F_{m,M}(b-a)-2H_{ab}.
\]
The symmetric diagonal output states contribute $2H_{ab}$, so the total over
all symmetric outputs is
$\alpha_mc_{2m}(b-a)+F_{m,M}(b-a)$.  Thus symmetrization changes the sign of
the exchange term relative to the antisymmetric sector, and the Fej\'{e}r
correction is added rather than subtracted.  After replacing $c_j$ by its
centered version $g_j$, these distinct-input contributions give the first
line of \eqref{eq:sym-centered}.

It remains to determine the image at the diagonal input class.  A calculation
with the normalized vector $s_{ii}=e_i\otimes e_i$ gives
\[
 (Q_N^+c_m)(0)=\left(1-\frac mM\right)^2.
\]
By contrast, extending the distinct-pair expression to $d=0$ would give
$(1-m/M)^2+F_{m,M}(0)$, and \eqref{eq:Fejer-def} gives
$F_{m,M}(0)=(m/M)^2$.  The correction required at the diagonal class is
therefore
\[
 -\left(\frac mM\right)^2\one_{d=0}.
\]
To express this term in the same cosine coordinates, use the type-I discrete
cosine transform (DCT-I) identity
\[
 \one_{d=0}=\frac{c_0+c_M}{2M}+\frac1M\sum_{j=1}^{M-1}c_j,
\]
valid on $d=0,\ldots,M$.  Multiplication by $-(m/M)^2$ supplies the
coefficients
\[
 -\frac{m^2}{M^3}\quad(1\le j<M),
 \qquad
 -\frac{m^2}{2M^3}\quad(j=M).
\]
After replacing the cosines by their centered versions, the constant part
cancels because $Q_N^+g_m$ has zero $\pi_+$-mean.  This gives the second line
of \eqref{eq:sym-centered}.

Finally, DCT-I orthogonality, with the endpoint weights encoded by $\pi_+$,
gives the inner products in \eqref{eq:sym-gram}.  The resulting Gram matrix
has smallest eigenvalue at least $M/(2M+1)$, so
$g_1,\ldots,g_M$ are linearly independent.  There are $M$ of them and the
mean-zero subspace of functions on $\{0,\ldots,M\}$ has dimension $M$;
hence they form a basis.  The same formula bounds the largest eigenvalue by
an absolute constant, so this basis is uniformly well conditioned in
$L^2(\pi_+)$.
\end{proof}

\section{Weighted Fourier contraction}\label{sec:weighted}

The formulas in Propositions~\ref{prop:anti-recursion} and
\ref{prop:sym-recursion} describe the action of the separation operators on
cosine coefficients.  In the antisymmetric case, the first application
lands in the span of the even frequencies; we compress and relabel those
frequencies as described below.  In the symmetric case, we retain the full
cosine basis.  In either case, let $S_L$ denote the resulting coefficient
matrix, where $L$ is the number of retained frequency coordinates.  Our
goal is to find a norm for which $S_L$ contracts by a factor strictly less
than one, uniformly in $L$.

For $1\le r\le L$, define
\begin{equation}\label{eq:weight}
 w_L(r)=r^{-1/2}\{1+3(r/L)^2\},
 \qquad
 \|a\|_{1,w}=\sum_{r=1}^Lw_L(r)|a_r|.
\end{equation}

The induced matrix norm is
\begin{equation}\label{eq:weighted-column-norm}
 \|S\|_{1,w}
 =\max_{1\le r\le L}\frac{1}{w_L(r)}
   \sum_{s=1}^Lw_L(s)|S(s,r)|.
\end{equation}
The unweighted column norm does not yield a contraction bounded uniformly
away from one.  The factor $r^{-1/2}$ assigns greater weight to low
frequencies, while $1+3(r/L)^2$ increases the weight near the endpoint at
which frequencies fold.  With these two corrections, the absolute column
sums in \eqref{eq:weighted-column-norm} admit a uniform bound below one for
both recursions.  The constant $3$ has no intrinsic significance; it is a
choice for which the two required bounds hold simultaneously.

Because the recursion coefficients have signs, we prove the stronger
entrywise estimate for a nonnegative matrix $A_L$ satisfying
$|S_L(s,r)|\le A_L(s,r)$.  If $\|A_L\|_{1,w}\le\rho<1$, then
$\|S_La\|_{1,w}\le\rho\|a\|_{1,w}$ for every coefficient vector $a$.
The proof establishes this estimate by rescaling $r=xL$: the weighted column
sums converge uniformly to explicit functions $R_0(x)$ and $R_+(x)$, whose
suprema are strictly below one.  Uniform power-sum estimates then transfer
the limiting gap back to the finite matrices.

\begin{lemma}[Weighted contraction]\label{lem:weighted-contraction}
There are $\rho_-,\rho_+<1$ and $L_0<\infty$ such that the following hold
whenever the relevant quotient length is at least $L_0$.
\begin{enumerate}
\item After compressing the image of \eqref{eq:anti-centered} to its even
frequencies, the positive majorant of the coefficient matrix has weighted
$1$-norm at most $\rho_-$.
\item The absolute-value coefficient matrix of \eqref{eq:sym-centered} has
weighted $1$-norm at most $\rho_+$.
\end{enumerate}
\end{lemma}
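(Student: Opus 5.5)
The plan is to compute the weighted column sums \eqref{eq:weighted-column-norm} of explicit nonnegative majorants $A_L$, pass to the scaling limit $r=xL$, and verify that the limiting column-sum functions stay strictly below one. I treat the two sectors in parallel.

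\emph{Antisymmetric majorant.} By Proposition~\ref{prop:anti-recursion}, the image of any centered cosine lies in the span of $\{g_{2\ell}\}$. After the first application I therefore restrict to inputs $g_{2r}$ and relabel $g_{2r}\mapsto$ coordinate $r$, with $1\le r\le L=M/2$. Applying \eqref{eq:anti-centered} to $g_{2r}$ gives
\[
 Q_N^-g_{2r}=\alpha_{2r}g_{4r}-\frac{2}{M^2}\sum_{\ell=1}^{2r-1}(2r-\ell)g_{2\ell}.
\]
Here $g_{4r}$ is relabeled to coordinate $2r$ when $2r\le L$, and folded to $M-2r=2(L-r)$ otherwise. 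The Fejér indices $\ell>L$ are folded to $M-\ell$. I take $A_L(s,r)$ to be the sum of the absolute values of all coefficients landing on $s$ after folding. The linear dependence \eqref{eq:anti-dependence} is ignored, since a majorant bound holds regardless of it.

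The weighted column sum then splits into a leading part and a Fejér part:
\[
 \alpha_{2r}\frac{w_L(\mathrm{fold}(2r))}{w_L(r)}
 +\frac{2}{M^2}\sum_{\ell<2r}(2r-\ell)\frac{w_L(\mathrm{fold}(\ell))}{w_L(r)}.
\]

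\emph{Symmetric majorant.} For \eqref{eq:sym-centered} I keep all $L=M$ frequencies. The column sum for input $m$ is
\[
 (1-m/M)^2\frac{w(\mathrm{fold}(2m))}{w(m)}
 +\frac2{M^2}\sum_{\ell<m}(m-\ell)\frac{w(\mathrm{fold}(2\ell))}{w(m)}
 +\frac{m^2}{M^3}\Bigl(\sum_{j<M}w(j)+\tfrac12w(M)\Bigr)\frac1{w(m)},
\]
where the last term is the diagonal-pair correction. Under $m=xL$, each piece has an $O(1)$ limit. For example, the last term behaves like
\[
 \frac{x^2}{L}\cdot 2\sqrt L\bigl(1+\cdots\bigr)\cdot\sqrt{xL}\Big/(1+3x^2),
\]
so it contributes $O(x^{5/2})$.

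\emph{Scaling limits.} Writing $\phi(y)=y^{-1/2}(1+3y^2)$, the Fejér sums become Riemann sums of integrals such as
\[
 \int_0^{2x}(2x-y)\,\phi(\mathrm{fold}(y))\,dy
\]
divided by $\phi(x)$, with the appropriate constants. The leading term tends to
\[
 (1-\kappa x)^2\,\frac{\phi(\mathrm{fold}(2x))}{\phi(x)}.
\]
This yields explicit piecewise-elementary functions $R_0(x)$ (antisymmetric) and $R_+(x)$ (symmetric) on $(0,1]$. Uniform convergence follows from power-sum estimates for $\sum\ell^{-1/2}$, $\sum\ell^{1/2}$ and $\sum\ell^{3/2}$ with $O(n^{-1/2})$ remainders. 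The $\ell^{-1/2}$ singularity is integrable, and the errors are $O(L^{-1/2})$ uniformly in $x$. Small $r$ (bounded $r$, where the rescaling is not accurate) I handle directly: there the leading ratio is $\approx2^{-1/2}$ and the Fejér and diagonal contributions are $O(r^2/L^{2})$ times bounded factors, so the column sum is at most $2^{-1/2}+o(1)$.

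\emph{Main obstacle.} The real work is showing $\sup_x R_0(x)<1$ and $\sup_x R_+(x)<1$. The danger zones are the fold point $x\approx1/2$ and $x$ near $1$. At the fold, the leading weight ratio $\phi(2x)/\phi(x)$ peaks through the factor $1+3y^2$. Near $x=1$, $\alpha$ vanishes but the Fejér mass is largest, and in the symmetric sector it enters with a plus sign. The $1+3(r/L)^2$ factor is meant to balance exactly these two effects. I would first evaluate $R_0$ and $R_+$ in closed form on each monotone piece, bound them on a fine grid with monotonicity or derivative bounds to certify a margin (say both at most $0.95$), and then set $\rho_\pm$ to be that bound plus the $O(L^{-1/2})$ error. $L_0$ is chosen so that this error is below the margin.
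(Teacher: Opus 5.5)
Your antisymmetric half is the paper's argument. You compress to the even frequencies with $L=N/4$, majorize by adding the absolute values of the leading term and of both folded Fej\'er contributions, pass to the limit $R_0$, and control the discretization error uniformly with power sums. The paper certifies $\sup R_0\le 4/5$ with closed forms and Bernstein coefficients rather than a grid, which is only a difference in presentation.

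The symmetric half fails as you have set it up. Your column sum adds the diagonal-pair correction of \eqref{eq:sym-centered} in absolute value separately from the Fej\'er term. Put $m=xL$ with $L=M$ and $K_x(y)=(2x-y)_{+}+(2x-2+y)_{+}$. The leading-plus-Fej\'er part then has exactly the antisymmetric limit $R_0(x)$: the Fej\'er mass now sits only on even outputs, but with coefficient $K_x(y)/L$ instead of $K_x(y)/(2L)$. The diagonal term alone tends to $\frac{x^{5/2}}{1+3x^2}\int_0^1 y^{-1/2}(1+3y^2)\,dy=\frac{16}{5}\frac{x^{5/2}}{1+3x^2}$. At $x=1$ each piece equals $4/5$, so your limiting column sum is $8/5$. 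It is already about $1.07$ at $x=1/2$ and stays above $1$ for all larger $x$ (about $1.3$ at $x=0.8$). The weight is fixed by \eqref{eq:weight}, so no grid or derivative check can certify your margin of $0.95$. For this majorant the claim is false: it dominates the absolute-value coefficient matrix entrywise, but too crudely.

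The missing idea is the sign cancellation that the phrase ``absolute-value coefficient matrix'' refers to. In \eqref{eq:sym-centered} the Fej\'er coefficients are positive and supported on even modes. The diagonal-pair correction $-m^2/M^3$ is negative on every mode. On an even output you must therefore combine the two before taking absolute values. In the limit an even output $y$ carries $|K_x(y)-x^2|/L$ and an odd output carries $x^2/L$. With the parity factor $1/2$, this gives the paper's integrand $x^2+|K_x(y)-x^2|$. For $x\ge 2-\sqrt2$ one has $\min_{0\le y\le 1}K_x(y)=4x-2\ge x^2$, so the correction is absorbed completely and $R_+=R_0\le 4/5$. For $x<2-\sqrt2$ the excess over $K_x$ is at most $2x^2$, and only on $y>x(2-x)$; this gives the paper's $\Delta(x)<1/5$.

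Concretely, in the column $m=M$ the Fej\'er term puts $2/M$ on each even mode below $M$, and the correction puts $-1/M$ on every mode. Every net entry therefore has size about $1/M$, and the weighted column sum is $4/5$ rather than your $8/5$. With this replacement, the rest of your scheme goes through as you describe: the bounded-$r$ check, the uniform power sums, and the choice of $L_0$.
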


\begin{proof}
Let $\mathcal C_L^-(r)$ and $\mathcal C_L^+(r)$ denote the weighted column
sums of the two nonnegative matrices in the statement.  The detailed
calculation in Appendix~\ref{app:weighted-verification} constructs explicit
functions $R_0,R_+:[0,1]\to\mathbb R$ and proves
\[
 \sup_{1\le r\le L}
 \left|\mathcal C_L^-(r)-R_0(r/L)\right|\longrightarrow0,
 \qquad
 \sup_{1\le r\le L}
 \left|\mathcal C_L^+(r)-R_+(r/L)\right|\longrightarrow0.
\]
The same calculation gives
\[
 \sup_{0\le x\le1}R_0(x)\le\frac45,
 \qquad
 \sup_{0\le x\le1}R_+(x)<1.
\]
Choose $\rho_-$ and $\rho_+$ strictly between the corresponding suprema and
one.  The uniform convergence then gives the two asserted matrix-norm bounds
for all sufficiently large $L$.
\end{proof}

\subsection{Quotient and full population bounds}

We now convert the coefficient contraction of
Lemma~\ref{lem:weighted-contraction} into the operator estimate needed in
\eqref{eq:two-copy-proof-goals}.  The conversion has three stages.  We first
use the tight-frame structure of the antisymmetric cosines, then use the
uniformly conditioned cosine basis in the symmetric sector, and finally
apply the focusing inequality to return from the separation quotients to the
original pair chains.

\smallskip
\noindent\emph{The antisymmetric quotient.}
Before compression to the even
frequencies, the absolute sum of the coefficients in the image of a single
mode is, by \eqref{eq:anti-centered}, at most
\[
 (1-m/M)^2+\frac{m(m-1)}{M^2}\le1.
\]
Moreover,
\[
 \max_r w_L(r)\le4,\qquad \min_r w_L(r)\ge L^{-1/2}.
\]
For every coefficient vector $a$, it follows that
\[
 (\min_r w_L(r))\|a\|_1
 \le \|a\|_{1,w}
 \le (\max_r w_L(r))\|a\|_1.
\]
Thus passing from the weighted norm to the ordinary coefficient $\ell^1$
norm costs at most $O(\sqrt L)=O(\sqrt N)$.  The first, uncompressed
application has coefficient $\ell^1$ norm at most one by the preceding
column-sum bound; every subsequent compressed application contracts by
$\rho_-$ in the weighted norm.  Finally, the Gram formula
\eqref{eq:anti-gram} bounds the $L^2(\pi_-)$ norm of a cosine combination by
a constant times the $\ell^1$ norm of its coefficients.  Absorbing the first
application and fixed constants into $C$ gives
\begin{equation}\label{eq:anti-mode-bound}
 \|(Q_N^-)^sg_m\|_{L^2(\pi_-)}\le C\sqrt N\,\rho_-^s.
\end{equation}
This controls one centered cosine at a time.  To obtain an operator bound,
use the fact, following from \eqref{eq:anti-dependence}--\eqref{eq:anti-gram},
that the centered cosines form a tight frame for the mean-zero antisymmetric
separation space.  Consequently every operator $T$ on that space satisfies
\begin{equation}\label{eq:anti-tight-frame}
 \|T\|_{\HS}^2=\frac{N-1}{N}
 \left\{2\sum_{m=1}^{M-1}\|Tg_m\|_2^2+\|Tg_M\|_2^2\right\}.
\end{equation}
Take $T=(Q_N^-)^s-\Pi_-$.  Since every $g_m$ is centered,
$Tg_m=(Q_N^-)^sg_m$.  There are $M=O(N)$ modes, so
\eqref{eq:anti-mode-bound} and \eqref{eq:anti-tight-frame} give
\[
 \|T\|_{\HS}^2
 \le C\,N\bigl(\sqrt N\,\rho_-^s\bigr)^2
 =C N^2\rho_-^{2s}.
\]
The operator norm is at most the Hilbert--Schmidt norm.  After changing $C$,
we conclude that
\begin{equation}\label{eq:anti-quotient-bound}
 \|(Q_N^-)^s-\Pi_-\|_{2\to2}\le CN\rho_-^s.
\end{equation}

\smallskip
\noindent\emph{The symmetric quotient.}
Here
$g_1,\ldots,g_M$ form a basis rather than a redundant tight frame.  Define
the synthesis map
\[
 G:\C^M\longrightarrow L_0^2(\pi_+),
 \qquad Ga=\sum_{m=1}^M a_mg_m,
\]
where $L_0^2(\pi_+)$ denotes the mean-zero subspace, and let $S=GG^*$ be its
frame operator.  The Gram formula \eqref{eq:sym-gram} says that the
eigenvalues of $G^*G$ are bounded above and below by absolute positive
constants; in particular, $S\ge I/3$.  Hence $S$ is invertible and
\[
 I=GG^*S^{-1}
\]
on the mean-zero subspace.

Take $T=(Q_N^+)^s-\Pi_+$.  Multiplying the preceding identity by $T$ gives
\[
 T=(TG)G^*S^{-1}.
\]
The weighted contraction controls the $m$th column of $TG$, namely
$Tg_m=(Q_N^+)^sg_m$, and gives
\[
 \|Tg_m\|_2\le C\sqrt N\rho_+^s
 \qquad(1\le m\le M).
\]
Summing over the $M=O(N)$ columns yields
$\|TG\|_{\HS}\le CN\rho_+^s$.  The uniform Gram bounds also give uniform
bounds on $\|G^*\|$ and $\|S^{-1}\|$.  Taking the operator norm in the
factorization of $T$ therefore proves
\begin{equation}\label{eq:sym-quotient-bound}
 \|(Q_N^+)^s-\Pi_+\|_{2\to2}\le CN\rho_+^s.
\end{equation}

\smallskip
\noindent\emph{Return to the full population chains.}
We have now bounded both separation-quotient operators on their mean-zero
subspaces.  Lemma~\ref{lem:focusing} says that the preceding $k$ focusing
steps have norm at most one and place every observable in precisely these
subspaces.  Combining \eqref{eq:anti-quotient-bound},
\eqref{eq:sym-quotient-bound}, and
Lemma~\ref{lem:focusing}, we obtain constants $C<\infty$ and $\rho<1$ such
that
\begin{equation}\label{eq:full-population-bound}
 \max_\pm\|(P_N^\pm)^{k+s}-\Pi_\pm\|_{2\to2}
 \le CN\rho^s.
\end{equation}

The estimate has a transparent division of labor.  The first $k$ steps remove
all dependence on the pair's common location exactly.  Each of the next $s$
steps contracts the remaining separation information by the uniform factor
$\rho$.  The prefactor $N$ comes from reconstructing an operator estimate
from the cosine coordinates.  It costs only another constant multiple of
$\log N$ steps and therefore does not change the Ehrenfest-scale conclusion.

\section{Proof of the two-design theorem}\label{sec:design-proof}

We now assemble the preceding estimates.  We first take $X_0=I$ and write
$\nu_{N,t}$ for the law of the walk after $t$ steps.  Its two-copy twirling
channel is
\[
 \cG_{\nu_{N,t}}^{(2)}(C)
 =\E\left[X_t^{\otimes2}C(X_t^*)^{\otimes2}\right].
\]
The proof has four steps: factor this channel through the sectors that
survive phase averaging, apply the four block estimates, convert the
resulting Hilbert--Schmidt operator bound to diamond norm, and finally choose
the number of steps.

\smallskip
\noindent\emph{Step 1: factor through the surviving space.}
Recall from \eqref{eq:one-layer-two-copy} that $\mathcal T^{(2)}$ is the
first averaged layer and from Section~\ref{sec:two-copy}
that $\mathcal K$ is its restriction to the surviving space
$\mathcal R=\operatorname{ran}\cD^{(2)}$.  Equation
\eqref{eq:iterate-one-layer-channel} gives
\[
 \cG_{\nu_{N,t}}^{(2)}=\mathcal K^{t-1}\mathcal T^{(2)}.
\]
The factorization separates entry into the surviving phase sectors from
subsequent evolution within them.  The first layer $\mathcal T^{(2)}$ performs the phase
projection described in Section~\ref{sec:two-copy}; every later layer is the
same four-block operator $\mathcal K$.  Thus no information outside those blocks can
reappear.
In particular, the exponent $t-1$ counts the layers that remain after this
initial projection.

\smallskip
\noindent\emph{Step 2: apply the four block estimates.}
The map $\mathcal T^{(2)}$ is a unitary conjugation followed by an orthogonal
projection, so $\|\mathcal T^{(2)}\|_{2\to2}=1$.  Haar invariance gives
$\cG_{H_N}^{(2)}=\cG_{H_N}^{(2)}\mathcal T^{(2)}$, and
\eqref{eq:haar-blocks} identifies its restriction to $\mathcal R$.  Hence
\[
 \cG_{\nu_{N,t}}^{(2)}-\cG_{H_N}^{(2)}
 =\left[
   \mathcal K^{t-1}
   -(\Pi_+\oplus\Pi_-\oplus0\oplus0)
  \right]\mathcal T^{(2)}.
\]
The operator in brackets is block diagonal.  Its norm is therefore the
largest norm of its four blocks.  The two coherence blocks vanish after $k$
restricted layers by \eqref{eq:coherence-zero}.  After $k+s$ restricted
layers, \eqref{eq:full-population-bound} places each population block within
$CN\rho^s$ of its stationary projection.  Thus, whenever $t-1\ge k+s$,
\begin{equation}\label{eq:twirl-2to2}
 \|\cG_{\nu_{N,t}}^{(2)}-\cG_{H_N}^{(2)}\|_{2\to2}
 \le CN\rho^s.
\end{equation}

\smallskip
\noindent\emph{Step 3: convert to diamond norm.}
The two-copy Hilbert space has dimension $D=N^2$.
For a linear map $\Phi:M_D(\C)\to M_D(\C)$, an ancillary space of dimension $D$
suffices in the definition of the diamond norm.  For any input
$X\in M_D(\C)\otimes M_D(\C)$, the output acts on a space of dimension
$D^2$.  The Schatten-norm comparison and stability of the induced
Hilbert--Schmidt norm give
\[
 \begin{aligned}
 \|(\Phi\otimes\mathrm{id}_D)(X)\|_1
 &\le D\|(\Phi\otimes\mathrm{id}_D)(X)\|_2\\
 &\le D\|\Phi\|_{2\to2}\|X\|_2
 \le D\|\Phi\|_{2\to2}\|X\|_1.
 \end{aligned}
\]
Taking the supremum over $X$ yields
\begin{equation}\label{eq:diamond-conversion}
 \|\Phi\|_\diamond\le D\|\Phi\|_{2\to2}=N^2\|\Phi\|_{2\to2}.
\end{equation}
This conversion is deliberately crude but sufficient.  Together with the
factor $N$ in \eqref{eq:twirl-2to2}, it asks the exponential contraction
$\rho^s$ to overcome a polynomial loss $N^3$.  Taking logarithms explains
the coefficient $3\log N$ in the next display and, more importantly, why the
total time remains of order $\log N$.

\smallskip
\noindent\emph{Step 4: choose the time and restore $X_0$.}
Choose
\[
 s\ge\frac{3\log N+\log(C/\varepsilon)}{|\log\rho|}.
\]
Then $CN^3\rho^s\le\varepsilon$.  Since the first projection layer is
followed by $k+s$ restricted layers, the total time is
$t=1+k+s=O(\log N+\log\varepsilon^{-1})$.  Equations
\eqref{eq:twirl-2to2}--\eqref{eq:diamond-conversion} prove the two-copy upper
bound in Theorem~\ref{thm:design-main}.  The exact one-copy
cutoff and both lower bounds are Propositions~\ref{prop:one-design} and
\ref{prop:design-lower}.  Finally, restoring a deterministic $X_0$ merely
precomposes the walk twirl and the corresponding Haar comparison with a
unitary channel.  Such composition preserves diamond norm, so the same bound
holds uniformly for every $X_0$.

\section{Discussion and open questions}\label{sec:discussion}

The hierarchy \eqref{eq:hierarchy} separates several questions that are often
grouped under the informal phrase ``mixing to Haar.''  Exact one-copy mixing
and fixed-accuracy two-copy mixing occur on the scale $k=\log_2N$.  At every
$t=o(N/\log N)$, however, the complete distribution is still asymptotically
as far from Haar as the normalized metric permits; it is not even absolutely
continuous for $t<N$.  Thus low-dimensional representations of the walk
forget the generating phase history long before the state space has been
explored at metric resolution.

After the logarithmic window the ensemble therefore gives the Haar answer for
every one-copy conjugation average and, within the design error, for every
two-copy observable.  In the quantum interpretation this includes
ensemble-averaged purity and out-of-time-order expressions of bidegree at most
$(2,2)$, meaning expressions containing at most two factors each of $U$ and
$U^*$.  It does not imply agreement for high-degree spectral statistics.
The lower bound for the full distribution is of a different nature.  It does
not come from a low-degree observable; it comes from the fact that a Haar
unitary is typically far from the set of matrices reachable by the walk in
$t$ steps.  The two time scales therefore describe different levels of
randomization: equilibration of low-order averaged observables and geometric
coverage of $\U(N)$.

Our results concern fresh, spatially independent phase kicks of full strength.
They do not establish a corresponding hierarchy for the deterministic baker,
for weak or temporally correlated noise, or for a local many-body circuit.
The full-law Wasserstein comparison itself extends to any fixed mixer whose
associated unistochastic chain has a suitable two-particle meeting bound, as
Remark~\ref{rem:general-mixer-coupling} explains.  Extending the two-copy
analysis, and hence the full hierarchy, would additionally require
replacements for the baker-specific finite-time focusing of translation
characters and the contractive separation quotient.

In particular, the binary-cell perturbations introduced by Schack and Caves
\cite{SchackCaves1993} fall outside the present argument.  Their diagonal
phases are discrete and spatially correlated, so phase averaging is no longer
an exact dephasing projection and the measure-preserving infinitesimal phase
coupling used in Section~\ref{sec:full-upper} is unavailable.  Determining
whether the hierarchy persists for that physically motivated weak-noise model
is therefore a separate problem rather than a direct extension of our proof.

The main sharp-time question left open is whether the two population chains
are already close to equilibrium immediately after the first focusing block.
The present lower and upper bounds determine the fixed-accuracy design time
up to constant factors, but not its leading constant or whether there is a
sharp finite-size transition (a cutoff).
Resolving that question would require substantially sharper control of
$(P_N^\pm)^k-\Pi_\pm$ than is needed for the results proved here.

\appendix

\section{Haar small-ball estimates}\label{app:small-balls}

We prove the two estimates in Lemma~\ref{lem:haar-small-balls}.  Haar
invariance reduces both to balls centered at the identity.  The argument has
three steps.  We first bound a spherical cap for one column conditioned on
the columns already exposed.  A normalized-radius ball then forces a fixed
positive fraction of large diagonal entries, while a fixed unnormalized
radius forces order $N$ diagonal entries into caps whose probabilities
shrink with $N$.

\smallskip
\noindent\emph{The conditional cap bound.}
Suppose that $u$ is uniform on
the unit sphere of a complex subspace $V\subset\C^N$ of dimension $d$, and
let $e$ be a fixed unit vector.  Since
$\langle e,u\rangle=\langle P_Ve,u\rangle$ and $\|P_Ve\|\le1$, for
$0<b<1$,
\begin{equation}\label{eq:complex-cap}
 \Pp\{\operatorname{Re}\langle e,u\rangle\ge b\}
 \le\Pp\{|\langle v,u\rangle|\ge b\}
 =(1-b^2)^{d-1},
\end{equation}
where $v=P_Ve/\|P_Ve\|$ when $P_Ve\ne0$; if $P_Ve=0$, the left side
vanishes.  By rotational invariance, every fixed unit vector in $V$ gives
the same distribution, and
$|\langle v,u\rangle|^2$ has the $\operatorname{Beta}(1,d-1)$ distribution.
Its upper tail is $(1-b^2)^{d-1}$, which proves the equality in
\eqref{eq:complex-cap}.

\smallskip
\noindent\emph{Normalized Frobenius radius.}
Fix $R<\sqrt2$ and put $a=1-R^2/2>0$.  The event
$\rho_N(I,U)\le R$ implies
\begin{equation}\label{eq:trace-tail}
 \sum_{j=1}^N\operatorname{Re}U_{jj}\ge aN.
\end{equation}
Set $b=a/2$ and $\gamma=(a-b)/(1-b)=a/(2-a)$.  Since every diagonal real
part is at most one, if only $q$ indices satisfied
$\operatorname{Re}U_{jj}\ge b$, then the sum in \eqref{eq:trace-tail} would
be at most
\[
 q+(N-q)b=bN+(1-b)q.
\]
For this to be at least $aN$, one must have $q\ge\gamma N$.  Thus
\eqref{eq:trace-tail} forces at least $\gamma N$ large diagonal entries.  Let
$m=\lfloor\gamma N\rfloor$.  For a fixed set $S$ of $m$ indices, expose the
corresponding columns successively.  Conditional on the preceding exposed
columns, the next column is uniform on their orthogonal complement, whose
complex dimension is $N-r+1$ at the $r$th exposure.  Applying
\eqref{eq:complex-cap} at every step gives
\begin{equation}\label{eq:fixed-subset-cap}
 \Pp\{\operatorname{Re}U_{jj}\ge b\ \forall j\in S\}
 \le(1-b^2)^{\sum_{r=1}^m(N-r)}.
\end{equation}
If at least $\gamma N$ diagonal entries are large, some set $S$ of size $m$
has the property in \eqref{eq:fixed-subset-cap}.  There are at most $2^N$
such sets.  A union bound therefore gives
\[
 H_N\{U:\rho_N(I,U)\le R\}
 \le2^N(1-b^2)^{mN-m(m+1)/2}
 \le e^{-c_RN^2+O_R(N)},
\]
Here $m=\gamma N+O(1)$, so the exponent
$mN-m(m+1)/2$ is a positive constant times $N^2$, up to an $O_R(N)$ error.
This proves \eqref{eq:normalized-small-ball}.

\smallskip
\noindent\emph{Fixed unnormalized Frobenius radius.}
For a fixed absolute radius $r>0$, the event
$\|I-U\|_{\HS}\le r$ implies
\[
 \sum_{j=1}^N(1-\operatorname{Re}U_{jj})\le r^2/2.
\]
At least $N/2$ indices consequently satisfy
$\operatorname{Re}U_{jj}\ge1-r^2/N$: otherwise more than half of the
nonnegative deficits $1-\operatorname{Re}U_{jj}$ would exceed $r^2/N$.
Take $m=\lfloor N/2\rfloor$ and
$b_N=1-r^2/N$ in \eqref{eq:fixed-subset-cap}.  For all large $N$,
$1-b_N^2\le2r^2/N$, and hence
\begin{align*}
 H_N\{U:\|I-U\|_{\HS}\le r\}
 &\le2^N\left(\frac{2r^2}{N}\right)^{mN-m(m+1)/2}\\
 &\le\exp\{-c_rN^2\log N+O_r(N^2)\}.
\end{align*}
The cap exponent is of order $N^2$, while
$\log(2r^2/N)=-\log N+O_r(1)$.  This supplies the additional factor
$\log N$ and proves \eqref{eq:absolute-small-ball}.

\section{Verification of the weighted contraction}\label{app:weighted-verification}

\begin{proof}[Proof of Lemma~\ref{lem:weighted-contraction}]
The proof has five steps.  We first write the discrete antisymmetric matrix,
then identify its continuum column sums and bound them.  We repeat the
column-sum comparison for the symmetric recursion and finally transfer both
continuum bounds back to the finite matrices.

\smallskip
\noindent\emph{Step 1: the compressed antisymmetric matrix.}
Equation \eqref{eq:anti-centered} maps every cosine mode to a combination of
even-frequency modes.  After this first application, the odd-frequency
coordinates are never needed.  Relabel $g_{2s}$ as mode $s$ and put
$L=N/4$.  The input and output indices of the compressed recursion then both
range over $1,\ldots,L$.  In these coordinates its signed coefficient matrix
is
\begin{equation}\label{eq:compressed-anti}
 S_L(s,r)=(1-r/L)^2\one_{s=\varphi_{2L}(2r)}
 -\frac{(2r-s)_++\one_{s<L}(2r-2L+s)_+}{2L^2},
\end{equation}
where $1\le r,s\le L$, $u_+=\max(u,0)$, and
\[
 \varphi_n(j)=\min\{j\bmod n,n-(j\bmod n)\}
\]
folds a frequency back to the first half of the circle.  For $s<L$, a folded
output can receive lower-frequency contributions from the two distinct
indices $s$ and $2L-s$.  At the endpoint $s=L$ these indices coincide, which
accounts for the indicator in \eqref{eq:compressed-anti}.

For the positive majorant we deliberately retain both terms even at that
endpoint and set
\[
 A_L^-(s,r)=(1-r/L)^2\one_{s=\varphi_{2L}(2r)}
 +\frac{(2r-s)_++(2r-2L+s)_+}{2L^2}.
\]
The lower-frequency coefficient in $A_L^-$ is at least the corresponding
coefficient in the exact signed matrix, so the triangle inequality gives
$|S_L(s,r)|\le A_L^-(s,r)$ entrywise.  This endpoint overcount is harmless
and lets us use the simpler majorant in the continuum calculation.  It is
therefore enough to bound the weighted column sums
\[
 \mathcal C_L^-(r)
 :=\frac{1}{w_L(r)}\sum_{s=1}^Lw_L(s)A_L^-(s,r),
 \qquad 1\le r\le L.
\]

\smallskip
\noindent\emph{Step 2: the continuum antisymmetric column sum.}
Set $x=r/L$.  The leading term of \eqref{eq:compressed-anti} lies at the
folded frequency $\varphi_{2L}(2r)$; after division by $L$, its location is
\[
 \tau(x)=2\min(x,1-x).
\]
The remaining coefficients form a Riemann sum whose limiting kernel is
\[
 K_x(y)=(2x-y)_++(2x-2+y)_+.
\]
Substitution of the weight \eqref{eq:weight} gives the following continuum
candidate; Step~5 will justify that
$\mathcal C_L^-(r)=R_0(r/L)+o(1)$ uniformly in $r$:
\begin{equation}\label{eq:R0}
 R_0(x)=
 (1-x)^2\sqrt{\frac{x}{\tau(x)}}
 \frac{1+3\tau(x)^2}{1+3x^2}
 +\frac{\sqrt x}{2(1+3x^2)}
 \int_0^1K_x(y)y^{-1/2}(1+3y^2)\,dy,
\end{equation}
with the endpoint values interpreted by continuity.  The first term in
$R_0$ is the contribution of the single folded leading mode; the integral
is the contribution of all lower-frequency modes.

\smallskip
\noindent\emph{Step 3: a uniform bound for $R_0$.}
For $0\le x\le1/2$, direct integration gives
\begin{equation}\label{eq:R0-low}
 R_0(x)=\frac{(1-x)^2}{\sqrt2}\frac{1+12x^2}{1+3x^2}
 +\frac{(4\sqrt2/3)x^2+(48\sqrt2/35)x^4}{1+3x^2}
 \le\frac45.
\end{equation}
For $1/2\le x\le1$, put $a=\sqrt{2-2x}$.  Clearing positive denominators
and using direct integration gives
\[
 R_0(x)=\frac{\sqrt{x}}{2(1+3x^2)}
 \left\{\frac{32}{5}(1-a^2)+\frac{11}{6}a^3
 +\frac{129}{70}a^7\right\}.
\]
Both sides of the desired inequality $R_0(x)\le4/5$ are nonnegative.
Substituting $x=1-a^2/2$, squaring, and clearing the remaining positive
denominators therefore preserves equivalence and reduces the claim to
$p(a)\ge0$ on $0\le a\le1$, where
\begin{align*}
 p(a)={}&149769a^{14}-299538a^{12}+297990a^{10}-1040256a^9
 -595980a^8\\
 &+3120768a^7+275233a^6-3115392a^5+493822a^4
 +3104640a^3\\
 &-3838464a^2-2069760a+3612672.
\end{align*}
Subsection~\ref{app:certificates} records positive Bernstein coefficients for
this polynomial and for the low-half check.  Those certificates are included
so that the numerical-looking bound is a finite exact verification rather
than a plot or floating-point computation.  Thus $\sup R_0\le4/5$.

\smallskip
\noindent\emph{Step 4: the symmetric limiting column sum.}
For the symmetric recursion, separate the folded leading term in
\eqref{eq:sym-centered}.  The lower-frequency Fej\'{e}r contribution is
supported on even modes, whereas the diagonal-pair correction contributes
to every mode.  After taking absolute values, an odd output therefore sees
only the diagonal contribution, whose limiting size is $x^2$; an even
output sees the difference between the two contributions, whose limiting
size is $|K_x(y)-x^2|$.  Repeating the weighted column calculation from
Step~2 gives the limiting upper bound
\begin{align}\label{eq:Rplus}
 R_+(x)={}&
 (1-x)^2\sqrt{\frac{x}{\tau(x)}}
 \frac{1+3\tau(x)^2}{1+3x^2}\notag\\
 &+\frac{\sqrt x}{2(1+3x^2)}
 \int_0^1\{x^2+|K_x(y)-x^2|\}
 y^{-1/2}(1+3y^2)\,dy.
\end{align}
Let $x_0=2-\sqrt2$ and $y_0=x(2-x)$.  For $x\ge x_0$, one has
$K_x(y)\ge x^2$ everywhere and hence $R_+(x)=R_0(x)$.  For $x<x_0$,
\begin{equation}\label{eq:Delta}
 R_+(x)\le R_0(x)+\Delta(x),\qquad
 \Delta(x)=\frac{x^{5/2}}{1+3x^2}
 \left\{\frac{16}{5}-2\sqrt{y_0}-\frac65y_0^{5/2}\right\}.
\end{equation}
For $x\le1/3$, $\Delta(x)\le4/(15\sqrt3)$.  On
$[1/3,x_0]$, bounding the increasing and decreasing factors at opposite
endpoints gives
\[
 \Delta(x)\le
 \frac{x_0^{5/2}}{1+3x_0^2}
 \left\{\frac{16}{5}-\frac{2\sqrt5}{3}
 -\frac65\left(\frac59\right)^{5/2}\right\}
 <0.186<\frac15.
\]
Therefore $\sup_xR_+(x)<1$.

\smallskip
\noindent\emph{Step 5: return to the finite matrices.}
It remains to pass from the limiting integrals back to the finite coefficient
matrices.  This is a uniform Riemann-sum estimate, including the integrable
singularity $y^{-1/2}$ at the origin.  For
$\alpha>-1$, uniformly in $0\le q\le L$,
\begin{equation}\label{eq:power-sum-uniform}
 \left|L^{-\alpha-1}\sum_{j=1}^qj^\alpha
 -\int_0^{q/L}y^\alpha\,dy\right|
 \le C_\alpha L^{-\min(1,\alpha+1)}.
\end{equation}
To obtain \eqref{eq:power-sum-uniform}, compare each summand with the
integral over its adjacent mesh interval.  Ordinary integral comparison
applies for $\alpha\ge0$; for $-1<\alpha<0$, begin at the first mesh point
to isolate the integrable singularity.  The resulting error is uniform in
the terminal index $q$.  Restricting the sum to one parity class doubles the
mesh size and gives half the same limiting integral, with the same order of
error.

The folded kernels have only finitely many breakpoints.  Between successive
breakpoints, every discrete weighted sum above is a linear combination of
\eqref{eq:power-sum-uniform} with
$\alpha=-1/2,1/2,3/2,5/2$.  Hence the antisymmetric and symmetric column sums
converge to $R_0$ and $R_+$ uniformly over all columns.  Choose $\rho_-$
strictly between $\sup_xR_0(x)$ and one, and choose $\rho_+$ strictly between
$\sup_xR_+(x)$ and one.  For all sufficiently large $L$, every finite
weighted column sum is then bounded by the corresponding $\rho_\pm$.  This
proves both assertions of the lemma.
\end{proof}

\subsection{Exact polynomial certificates}\label{app:certificates}

We record the exact positivity checks used in Step~3 above.  Recall that the
degree-$n$ Bernstein
basis on $[0,1]$ is
\[
 B_{j,n}(y)=\binom nj y^j(1-y)^{n-j},
 \qquad 0\le j\le n.
\]
Every $B_{j,n}$ is nonnegative on $[0,1]$.  Consequently, if a polynomial is
written as $\sum_j b_jB_{j,n}$ with all $b_j>0$, then the polynomial is
strictly positive throughout the interval.  The coefficient lists below are
therefore exact certificates for the two inequalities used in the proof.

For \eqref{eq:R0-low}, use $1/\sqrt2<3/4$ and $\sqrt2<3/2$, and put
$y=2x\in[0,1]$.  The difference between $4/5$ and the resulting rational
upper bound is
\[
 \frac{q(y)}{35(3y^2+4)},\qquad
 q(y)=-\frac{387}{4}y^4+315y^3-\frac{1309}{4}y^2+105y+7.
\]
In the degree-four Bernstein basis on $[0,1]$, the coefficients of $q$ are
\[
 7,\quad \frac{133}{4},\quad\frac{119}{24},\quad\frac78,\quad3.
\]
They are all positive, and the denominator $35(3y^2+4)$ is positive, so this
certifies the bound in \eqref{eq:R0-low}.  For the degree-fourteen polynomial
$p$ arising on the high half of the interval, the
Bernstein coefficients are
\begin{align*}
&3612672,\ 3464832,\ \frac{42572544}{13},\ \frac{39664800}{13},
\ \frac{400805650}{143},\ \frac{362914778}{143},\\
&\frac{88204643}{39},\ \frac{847984627}{429},\ \frac{722244916}{429},
\ \frac{197950372}{143},\ \frac{155244460}{143},\\
&\frac{10426436}{13},\ \frac{50185120}{91},\ \frac{2261328}{7},\ 95504.
\end{align*}
These coefficients are again strictly positive, proving $p(a)>0$ on
$[0,1]$ and completing the high-half check.

\section*{Declarations}

\paragraph{Data and code availability.}
No datasets were generated or analyzed in this study.

\paragraph{Use of AI-powered tools.}
AI-powered tools were used in the preparation of this manuscript for
literature search, computational checking, exploring proof ideas, and
editorial assistance.  The author takes full responsibility for the
mathematical arguments, accuracy, and final text.

% EJP asks authors to include the generated bibliography directly in the
% manuscript source.  The entries below were generated with amsplain.
\providecommand{\bysame}{\leavevmode\hbox to3em{\hrulefill}\thinspace}
\providecommand{\MR}{\relax\ifhmode\unskip\space\fi MR }
\providecommand{\MRhref}[2]{%
  \href{http://www.ams.org/mathscinet-getitem?mr=#1}{#2}}
\providecommand{\href}[2]{#2}

\begin{acks}
The author is grateful to Persi Diaconis for introducing him to the problems
related to the quantum baker walk.
\end{acks}

\end{document}